\renewenvironment{abstract}{%
\small\begin{center}
\begin{minipage}{.9\textwidth}
}
{\par\noindent\end{minipage}\end{center}\vspace{3 em}}
\renewcommand\@maketitle{%
\hfill
\begin{center}\begin{minipage}{0.9 	\textwidth}
\centering
\vskip 2em
\let\footnote\thanks 
{\LARGE \@title \par }
\vspace{1 em}
\vskip 1 em
{\large \@author \par}
\vspace{3.5 em}

\end{minipage}\end{center}
\par
}
\newcommand{\stirlingi}{\genfrac{[}{]}{0pt}{}}
\newenvironment{tenumerate}{
\begin{enumerate}
  \setlength{\itemsep}{0pt}
  \setlength{\parskip}{0pt}
}{\end{enumerate}}
\newcommand\NN{\mathbb{N}}
\DeclareMathOperator\sgn{sgn}
\newcommand{\sch}{\operatorname{sch}}
\newcommand{\Ext}{\mathsf{Ext}}
\newcommand\inter[1]{[#1]}
\definecolor{newcol}{rgb}{0, 0.0, 0}
\DeclareTextFontCommand{\new}{\color{newcol}\bfseries\em}
\newcommand{\antishriek}{\text{\raisebox{\depth}{\textexclamdown}}}
\newtheoremstyle{mytheorem}
  {\topsep}   
  {\topsep}   
  {\itshape}  
  {0pt}       
  {\bfseries\color{newcol}} 
  {\color{newcol}}         
  {5pt plus 1pt minus 1pt} 
  {}          
\theoremstyle{mytheorem}
\newtheorem{theorem}{Theorem}[section]
\newtheorem{prop}{Proposition}[section]
\newtheorem{lema}{Lemma}[section]
\newtheorem*{conj*}{Conjecture}
\newtheorem{cor}{Corollary}[section]
\newtheoremstyle{mydefinition}
  {\topsep}   
  {\topsep}   
  {}  
  {0pt}       
  {\bfseries\color{newcol}} 
  {\color{newcol}}         
  {5pt plus 1pt minus 1pt} 
  {}          
\theoremstyle{mydefinition}
\newtheorem{deff}{Definition}[section]
\newtheorem{obs}{Observation}[section]
\newtheorem{remark}{Remark}[section]
\theoremstyle{plain} 
\newcommand{\thistheoremname}{}
\newtheorem{genericthm}[theorem]{\thistheoremname}
\newtheorem*{theorem*}{Theorem}
\patchcmd{\section}{\normalfont}{\normalfont\large}{}{}
\definecolor{bluegray}{rgb}{0.4, 0.6, 0.8}
\newcommand\spe[1]{\mathsf{#1}}
\newcommand{\e}{\spe{E}}
\newcommand{\x}{\spe{X}}
\newcommand{\y}{\spe{Y}}
\newcommand{\z}{\spe{Z}}
\newcommand{\cc}{\spe{C}}
\renewcommand{\L}{\spe{L}}
\renewcommand{\hom}{\operatorname{Hom}}
    \renewcommand\subsection{%
        \@startsection{subsection}{4}{0 ex}%
           {1 em}%
           {-.5\baselineskip}%
           {\bfseries\normalfont\normalsize\bfseries}}
\renewcommand{\tt}{\otimes}
\newcommand{\ps}{\mathbin\parallel}
\newcommand{\GSet}{\mathsf{Set}^\times}
\newcommand{\FSet}{\mathsf{FinSet}}
\newcommand{\Set}{\mathsf{Set}}
\newcommand{\Mod}{\mathsf{Mod}_\kk}
\newcommand{\Sp}{\mathsf{Sp}}
\newcommand{\CC}{{\mathsf{C}}}
\newcommand{\CX}{{C}}
\newcommand{\SC}{K}
\newcommand{\fs}{\sslash}
\newcommand{\bs}{\bbslash}
\renewcommand{\Sp}{\mathsf{Sp}}
\newcommand{\bb}[1]{\mathbf{#1}}
\DeclarePairedDelimiter\simp{\llbracket}{\rrbracket}
\newcommand{\Addresses}{{
  \bigskip
  \footnotesize

  \textsc{School of Mathematics, Trinity College, Dublin 2, Ireland}\par\nopagebreak
  \textit{E-mail address:} \texttt{pedro@maths.tcd.ie}
  }}
\newcommand{\kk}{\Bbbk}
\title{\setstretch{1.3}\textbf{The cohomology
of coalgebras in species}}
\author{
\textsc{Pedro Tamaroff}
}
\date{}
\begin{document}
\maketitle 

\begin{abstract}
Aguiar and Mahajan introduced a cohomology theory for the
twisted coalgebras of Joyal, with particular interest in 
the computation of their second cohomology group, which 
gives rise to their deformations. We use the Koszul duality 
theory between twisted algebras and coalgebras on the 
twisted coalgebra that gives rise to their cohomology 
theory to give a new alternative description of it
which, in particular, allows for its 
effective computation. We compute it completely in various 
examples, including those proposed by Aguiar and Mahajan, 
and obtain structural results: in particular, we study its
multiplicative structure and provide a Künneth formula.
\end{abstract}

 \thispagestyle{empty}

\section*{Introduction}

In their manuscript~\cite{Aguiar}*{Chapter 9},
Aguiar and Mahajan began investigating a
cohomology theory defined for the twisted
coalgebras of Joyal~\cite{Joyal,Joyal1986},
with the idea of producing certain deformations 
of them. Indeed,
as it often happens, a $2$-cocycle on a coalgebra
can be used to construct an infinitesimal
deformation of it and, conversely, infinitesimal
deformations of coalgebras correspond to
$2$-cocycles, as in~\cite{Gerstenhaber1964}. 
In particular, they defined the relevant complex
computing their cohomology theory, and performed
some low degree computations.

Although the deformations of Aguiar and Mahajan
are performed on coalgebras $\x$, which the theory
dictates should be studied by the Cartier cohomology 
groups~\cite{Cartier,Mastnak2009,Brzeziski2001}
 where we view $\x$ as a bicomodule
over itself, the fact their deformations only modify
coefficients imply simpler a cohomology theory
is needed to fulfill their desideratum: 
one can instead fix one
coalgebra, the exponential species $\e$, 
over which all coalgebras considered by 
Aguiar and Mahajan are bicomodules over,
and instead compute the Cartier cohomology groups
$H^*(\x,\e)$. With this at hand, the cohomology
group $H^2(\x,\e)$ is indeed in bijection
with the infinitesimal deformations Aguiar
and Mahajan seek to obtain.
These infinitesimal deformations always
integrate over a field of characteristic zero,
as we observe 
in Theorem~\ref{thm:integration},
although this statement is 
already implicit in~\cite{Aguiar}.

In this article, we use a version of
 Koszul duality theory~\cite{Priddy}
for algebras and coalgebras in combinatorial species
(called `twisted (co)algebras' in the literature~\cite{Joyal1986,VladimirMurray})
to provide a small Koszul complex that computes the
groups $H^*(\x,\e)$, the observation we make
is that $\e$ is a cocommutative cofree conilpotent
coalgebra and, as such is Koszul. To do this, 
we compute the Koszul dual algebra $\e^!$ corresponding
to $\e$, and show (Theorem~\ref{thm:mainresult})
that under mild homological conditions
on the species underlying $\x$, there is a cochain complex
\[K^*(\x) = \hom_{\mathsf{Sp}_\kk}(\x,\e^!)\]
which computes  $H^*(\x,\e)$, whose differential we compute
explicitly (Theorem~\ref{diffCC}). Having done this, we observe
that $\e$ is a bialgebra, which allows us to define an
internal tensor product $\otimes$ on $\e$-bicomodules, for which
we give a K\"unneth theorem (Theorem~\ref{thm:Kunneth}).
At the same time, we show how to define
products on $K^*(\x)$ coming from bicomodule codiagonal
maps \[\Delta : \x\longrightarrow \x\cdot \x\] which
make it into a dga algebra, whose formula we make 
explicit (Theorem~\ref{cupformula}).
In case $\x$ is a cosymmetric bicomodule (which
corresponds to the situation in which the coalgebra is
cocommutative) we show that the differential in
the Koszul complex vanishes, and that the cup product
is cocommutative.

Our results allows us to compute the
desired cohomology theory for many coalgebras considered
by Aguiar and Mahajan, which were unknown, and to
recover all the known computations that were done
at the time we obtained our results, such as those
of J. Coppola~\cite{tesisJavier}. We
compute all the cohomology groups of the coalgebra 
of linear orders considered, for example, in~\cite{Patras2004},
and by Aguiar and Mahajan (to the best of our knowledge,
they had computed its first two cohomology groups, and
found a non-trivial cocycle in $H^2(\L,\e)$). We do
the same for the coalgebras of partitions and compositions
and give partial computations for the species of graphs
with its cosymmetric structure given by graph restriction:
using the cup product in the Koszul complex, we find a polynomial
algebra generated by an element in degree four, thus
proving that the cohomology groups do not vanish in
infinitely many degrees.

\subsection*{Structure.}
This paper is organized as follows. In Section 1 we 
recall all the necessary ingredients needed to define the
cohomology theory of interest, which we do in Section 2.
In Section 3 we define and compute the Koszul complex
$K^*(\x)$ replacing ${C}^*(\x,\e)$, and prove Theorem~\ref{thm:mainresult}, Theorem~\ref{diffCC}, Theorem~\ref{thm:Kunneth}
and Theorem~\ref{cupformula}. In Section 4, we put our work in the
context of Koszul duality for (twisted) coalgebras and
compute the Koszul dual algebra to the coalgebra of linear orders.
 Finally, in Section 5, we briefly explain which kind of
 infinitesimal deformations the theory of~\cite{Aguiar}*{Section 9}
 defines, and observe that they can always be integrated.
\subsection*{Conventions.}
Throughout, $\kk$ is a unital
commutative ring, and when we
write $\tt$ y $\hom$, we will be
considering the usual functors on 
$\kk$-modules, unless stated otherwise.
We will write species with sans-serif
capital letters, and write sets with
capital italics.
A \emph{decomposition} $S$ of \emph{length} $q$ of a set $I$ is an ordered tuple 
$(S_1,\ldots,S_q)$ of possible empty subsets of $I$, which we call the 
\emph{blocks of $S$}, that are pairwise disjoint and whose union is $I$. We say 
$S$ is a \emph{composition} of $I$ if every block of $S$ is nonempty. It is 
clear that if $I$ has $n$ elements, every composition of $I$ has at most $n$ 
blocks. We will write $S\vdash I$ to mean that $S$ is a decomposition of $I$, 
and if necessary will write $S\vdash_q I$ to specify that the length of $S$ is 
$q$. Notice the empty set has exactly one composition which has length zero, the 
empty composition, and exactly one decomposition of each length $n\in 
\mathbb{N}_0$. If $T$ is a subset of $I$ and $\sigma : I \longrightarrow J$ is a 
bijection, we let $\sigma_T : T \longrightarrow \sigma(T)$ be the bijection 
induced by $\sigma$. 
 
\subsection*{Acknowledgments.} I
thank my MSc advisor M. Su\'arez--Alvarez for
the countless mathematical conversations we had
during my time in the University of Buenos Aires
(2013--2017) ---which I remember fondly---
and for his invaluable input during the
period of time when the thesis that this article 
is based on was written. I also thank anonymous
referees of a previous version of this manuscript
for pointing me to the references~\cite{Stover,Patras2004,DOI1981}.

\section{Algebras and coalgebras in species}

\subsection{The category of species.}

A 
\emph{combinatorial species} over a category $\mathcal C$ is a covariant functor $\x:\GSet 
\longrightarrow \mathcal C$, where $\GSet$ is the category of finite sets and bijections.
In particular, for every finite set~$I$ we have a map 
\[ \sigma\in 
\operatorname{Aut}(I) \longmapsto \x[\sigma]\in \operatorname{Aut}(\x[I])\] which 
gives an action of the symmetric group with letters in $I$ on $\x[I]$. The 
category $\GSet$ is a grupoid, and it has as skeleton the full subcategory 
spanned by the sets 
$[n]=\{1,\ldots,n\}$ (in particular, $[0]=\varnothing$), and a species is determined, up to isomorphism, by 
declaring its values on the finite sets $[n]$ and on every $\sigma \in S_n$. 
We denote by $\mathsf{Sp}(\mathcal C)$ the category $\operatorname{Fun}(\GSet,\mathcal C)$ of 
species over $\CC$, whose morphisms are natural transformations. 

{{}} Our main interest will lie on species over sets or vector spaces. We write 
$\mathsf{ Sp}$ for the category of species over $\Set$, the category of sets and 
functions, and call its objects \emph{set species}. If a species takes values on 
the subcategory $\FSet$ of finite sets we call it a \emph{finite} set species, 
and if $\x(\varnothing)$ is a singleton, we say it is \emph{connected}. We write 
$\mathsf{Sp}_\kk$ for the category of species over $\Mod$, the 
category of modules over $\kk$, and call its objects \emph{linear species}.  If a 
species takes values on the subcategory of finite generated 
modules we call it a linear species of \emph{finite type}, and we say it is 
\emph{connected} if $\x(\varnothing)$ is $\kk$-free of rank one.

{{}} Denote by $\kk[-]$ the functor $\Set \longrightarrow \Mod$ that 
sends a set $\x$ to the free $\kk$-module with basis $\x$, which we will denote by 
$\kk\x$, and call it the \emph{linearization of $\x$}. By postcomposition, we obtain 
a functor $ L:\mathsf{Sp} \longrightarrow \mathsf{Sp}_k$ that sends a 
set species $\x$ to the linear species $\kk\x$. The species in $\mathsf{Sp}_\kk$ that 
are in the image of $\kk[-]$ are called \emph{linearized species}.  
 
\subsection{The Cauchy product.}
  
Let $\x$ and $\y$ be linear species over $\kk$. 
The Cauchy product $\x\cdot \y$ is the 
linear species such that for every finite set $I$
\[ (\x\cdot \y)[I] = \bigoplus\limits_{(S,T)\vdash I} \; \x[S]\tt \y[T]\]
the direct sum running through all decompositions of $I$ of length two.
This construction extends to produce a bifunctor 
$-\cdot -: \Sp_\kk
\times \Sp_\kk \longrightarrow \Sp_\kk$. In what follows, whenever we speak of the 
category $\Sp_\kk$, we will view it as a monoidal category with the monoidal 
structure.

It is important to notice the construction of the Cauchy product in $\Sp_\kk$ 
carries over to the category $\Sp(\mathcal C)$ when $\mathcal C$ is any monoidal category~\cite{Kassel} with 
finite coproducts which commute with its tensor product. The main example of 
this phenomenon happens when $\mathcal C$ is the category $\Set$. If $\x$ and $\y$ are 
set species, the species $\x\cdot \y$ has
\[ (\x\cdot \y)[I]= \bigsqcup_{(S,T)\vdash I} \x[S]\times \y[T],\]
so that a structure $z$ of species $\x\cdot \y$ over a set $I$ is determined by a 
decomposition $(S,T)$ of~$I$ and a pair of structures $(z_1,z_2)$ of species $\x$ 
and $\y$ over $S$ and $T$, respectively.

\subsection{Coalgebras and bialgebras in species.}

An associative algebra $(\x,\mu,\eta)$ in the category $\Sp_k$ 
of $k$-linear species will simply be called an \emph{algebra};
if we need to make a clear distinction between usual algebras
in $ \Mod$ and those in species, we will use the term
`twisted algebra' as in~\cite{VladimirMurray}*{Chapter 4}.
Such an object is determined by a product 
$ \mu :\x\cdot \x\longrightarrow \x$ and a unit $\eta : \bb{1} 
\longrightarrow \x$. Specifying the first amounts to giving its components 
\[\mu(S,T) :\x[S]\tt\x[T] \longrightarrow\x[I]\] at each decomposition $(S,T)$ of 
every finite set $I$, and specifying the latter amounts to a choice of the 
element $\eta(\varnothing)(1) \in \x(\varnothing)$, which we will denote by $1$ 
if no confusion should arise. 
We think of the product as an operation that glues partial structures 
on~$I$, and of the unit as an ``empty'' structure. 
Dually, a coassociative coalgebra $(\x,\Delta,\varepsilon)$
in $\Sp_\kk$, which we call simply a \emph{coalgebra},
is determined by a coproduct $\Delta : \x\longrightarrow
\x\otimes \x$ and a counit $\varepsilon : \x \longrightarrow \bb{1}$. 
The coproduct has, at each 
decomposition $(S,T)$ of $I$, a component 
\[\Delta(S,T):\x[I]\longrightarrow 
\x[S]\otimes\x[T],\] which we think of as
 breaking up a combinatorial structure on 
$I$ into substructures on $S$ and $T$, 
while the counit is a map of $\kk$-modules 
$\x[\varnothing] \longrightarrow \kk$. 
As usual, a bialgebra is an algebra in the category
of coalgebras in species or, equivalently, 
a coalgebra in the category of algebras in species.

We can also consider `set-theoretic' analogues of 
the three objects above.  For example, by a set
algebra we mean a set species $\x_0$ endowed
with product maps \[\mu_{S,T} :\x_0[S]\times
\x_0[T]\longrightarrow \x_0[S\sqcup T],\]
one for each pair $(S,T)$ of (disjoint) finite
sets, satisfying suitable
associativity conditions. Similarly, by a set
coalgebra we mean a set species  $\x_0$ 
endowed with product and coproduct maps
\[\Delta_{S,T} : \x_0[S\sqcup T]
	\longrightarrow \x_0[S]\times \x_0[T]\] 
satisfying suitable coassociativity conditions.

\subsection{The exponential species.}
We write $\e$ for the $\kk$-linear species
for which $\e[I]$ is one dimensional with basis
the singleton $\{I\}$. For convenience, we 
write $e_I$ for the corresponding basis element.
We call this the \emph{exponential species},
and observe that, by definition,
it is a linearized species of the form
$\kk \e_0$ for the set species $\e_0$
with $\e_0[I] =\{I\}$ for each finite set $I$.
The following proposition endows the exponential
species with a bialgebra structure.

\begin{prop}
The linearized exponential species $\e$ is a bialgebra with 
product and coproduct with components 
\[ \mu(S,T) : \e[S]\tt \e[T] \longrightarrow \e[I], \qquad \Delta(S,T) : 
\e[I]\longrightarrow \e[S]\tt \e[T]\]
at each decomposition $(S,T)$ of a 
finite set $I$ such that 
$\mu(S,T)(e_S\tt e_T) =  e_{I}$ and 
$\Delta(S,T)(e_I)=e_S\tt e_T$
and with unit and counit the morphisms 
$\varepsilon: \e \longrightarrow \bb{1}$ 
and $\eta : \bb{1} \longrightarrow \e$ 
such that $\varepsilon(e_\varnothing) = 
1$ and $\eta(1) = e_\varnothing$. 
\end{prop}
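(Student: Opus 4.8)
The plan is to verify directly that the four structure maps defined componentwise on basis elements assemble into natural transformations and satisfy the (co)associativity, (co)unit, and bialgebra compatibility axioms. Since every module $\e[I]$ is free of rank one on $e_I$, and the Cauchy product $(\e\cdot\e)[I]=\bigoplus_{(S,T)\vdash I}\e[S]\tt\e[T]$ is free on the elements $e_S\tt e_T$ indexed by decompositions $(S,T)$ of $I$, it suffices to check each identity on these basis elements, which turns all the axioms into combinatorial bookkeeping about decompositions of finite sets.

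First I would record naturality: for a bijection $\sigma:I\to J$ we have $\e[\sigma](e_I)=e_J$, and a decomposition $(S,T)$ of $I$ is carried by $\sigma$ to the decomposition $(\sigma(S),\sigma(T))$ of $J$; so both $\mu$ and $\Delta$ commute with the induced isomorphisms, and the same triviality applies to $\eta$ and $\varepsilon$ on the empty set. Next I would check associativity of $\mu$: given a decomposition $(R,S,T)$ of $I$ into three blocks, both bracketings send $e_R\tt e_S\tt e_T$ to $e_I$, because $\mu$ simply forgets the partition and remembers only the ambient set; coassociativity of $\Delta$ is the mirror statement, $e_I\mapsto e_R\tt e_S\tt e_T$ summed appropriately, and both composites agree since a length-three decomposition is determined by its blocks irrespective of how we iterate the binary coproduct. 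The unit axiom follows because the only decompositions involving $\varnothing$ as one block are $(I,\varnothing)$ and $(\varnothing,I)$, and $\mu(I,\varnothing)(e_I\tt e_\varnothing)=e_I$; dually for the counit, using that $\varepsilon$ is nonzero only on $\e[\varnothing]$.

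The bialgebra compatibility is the one identity with genuine content, so that is where I would be most careful, though it is still elementary. One must show $\Delta\mu=(\mu\tt\mu)(\mathrm{id}\tt\tau\tt\mathrm{id})(\Delta\tt\Delta)$ as maps $\e\cdot\e\to\e\cdot\e$. Tracing $e_S\tt e_T$ (for $(S,T)\vdash I$) through the left side: $\mu$ yields $e_I$, then $\Delta$ expands it as $\sum_{(A,B)\vdash I}e_A\tt e_B$. Through the right side: $\Delta\tt\Delta$ expands $e_S\tt e_T$ as $\sum_{(S_1,S_2)\vdash S}\sum_{(T_1,T_2)\vdash T}(e_{S_1}\tt e_{S_2})\tt(e_{T_1}\tt e_{T_2})$, the braiding reorders this to $(e_{S_1}\tt e_{T_1})\tt(e_{S_2}\tt e_{T_2})$, and the two copies of $\mu$ collapse it to $\sum e_{S_1\sqcup T_1}\tt e_{S_2\sqcup T_2}$. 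The key combinatorial fact I would isolate is the bijection between pairs of decompositions $(S_1,S_2)\vdash S$, $(T_1,T_2)\vdash T$ and decompositions $(A,B)\vdash I$ equipped with the induced splitting along $I=S\sqcup T$, given by $A=S_1\sqcup T_1$, $B=S_2\sqcup T_2$; under this bijection the two sums match term by term. Compatibility of $\eta$ and $\varepsilon$ (that $\varepsilon$ is an algebra map, $\eta$ a coalgebra map, and $\varepsilon\eta=\mathrm{id}$) is immediate from the formulas on $\e[\varnothing]$. I anticipate no real obstacle; the only thing to state cleanly is the indexing bijection for decompositions in the compatibility axiom, everything else being forced by the rank-one freeness of $\e$.
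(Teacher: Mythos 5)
Your proof is correct and takes essentially the same route as the paper, which simply observes that all verifications follow from $\e_0[I]$ being a singleton (equivalently, each $\e[I]$ being free of rank one on a canonical basis element). You merely spell out in detail --- including the indexing bijection for the bialgebra compatibility --- what the paper compresses into one line.
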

\begin{proof}
The verifications needed to prove this follow immediately from the fact that 
$\e_0[I]$ is a singleton for every finite set $I$. 
\end{proof}

Although Aguiar and Mahajan are interested
in coalgebras in species, the following proposition
shows that, when such coalgebras arise from set
coalgebras, they automatically become bicomodules
over the exponential species. This 
allows us to work with a linear category of 
bicomodules, as opposed to a category of
coalgebras, and to interpret Aguiar and
Mahajan's cohomology theory as a derived functor
in the category of $\e$-bicomodules.

\begin{prop}\label{prop:terminalE} 
The exponential species $\e_0$ admits a unique structure of set-theoretic bialgebra so that if
$\,\x_0$ is a set-theoretic coalgebra in $\Sp$, the linearization of the unique morphism of species $\x_0\longrightarrow \e_0$ is a morphism of  coalgebras.
In particular, every coalgebra coming from a set-theoretic
coalgebra is canonically an $\e$-bicomodule. 
 
\end{prop}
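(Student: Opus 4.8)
The plan is to verify the two assertions in turn: first that $\e_0$ carries a (necessarily unique) set-theoretic bialgebra structure with the stated universal property, and then to deduce the bicomodule claim by linearizing. For the first part, I would start from the observation that $\e_0[I]=\{I\}$ is a singleton for every finite set $I$, so the product maps $\mu_{S,T}\colon\e_0[S]\times\e_0[T]\to\e_0[S\sqcup T]$ and coproduct maps $\Delta_{S,T}\colon\e_0[S\sqcup T]\to\e_0[S]\times\e_0[T]$ are forced: there is exactly one map between singletons. Uniqueness of the bialgebra structure is therefore immediate, and all the (co)associativity, (co)unit, and bialgebra compatibility diagrams commute automatically since every diagram of singletons commutes. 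This is exactly the content already recorded in the proposition just above the statement, so this step is essentially free.

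Next I would address the universal property. Given a set-theoretic coalgebra $\x_0$ in $\Sp$, there is a unique morphism of species $f\colon\x_0\to\e_0$, simply because $\e_0$ is the terminal object of $\Sp$ (each $\e_0[I]$ is a singleton, so there is a unique map $\x_0[I]\to\e_0[I]$, and naturality is automatic). I must check that $f$ is a morphism of set coalgebras, i.e.\ that it intertwines the coproducts and counits. Concretely, for each decomposition $(S,T)$ of $I$ one needs the square relating $\Delta^{\x_0}_{S,T}$ and $\Delta^{\e_0}_{S,T}$ via $f$ to commute; but the target of that square is $\e_0[S]\times\e_0[T]$, a singleton, so commutativity is automatic. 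The same reasoning handles the counit. Hence $f$ is a morphism of set coalgebras, and applying the linearization functor $L$ — which is monoidal and hence sends set coalgebras to coalgebras and morphisms thereof to morphisms of coalgebras — shows that $Lf\colon\x\to\e$ is a morphism of coalgebras, where $\x=\kk\x_0$ and $\e=\kk\e_0$ as in the previous subsection.

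For the final clause, I would use that $\e$ is a bialgebra (by the preceding proposition) together with the coalgebra map $\pi:=Lf\colon\x\to\e$. A coalgebra map to a bialgebra endows the source with the structure of a bicomodule over that bialgebra: the left coaction $\x\xrightarrow{\Delta_\x}\x\cdot\x\xrightarrow{\pi\cdot\mathrm{id}}\e\cdot\x$ and the right coaction $\x\xrightarrow{\Delta_\x}\x\cdot\x\xrightarrow{\mathrm{id}\cdot\pi}\x\cdot\e$, with the coassociativity and counit axioms of a bicomodule following from coassociativity of $\Delta_\x$, the fact that $\pi$ is a coalgebra map, and the bialgebra axioms on $\e$; the left–right compatibility is a consequence of coassociativity of $\Delta_\x$ as well. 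I expect no genuine obstacle here: the only point requiring a little care is the bookkeeping of decompositions in the Cauchy product when writing out the coactions componentwise, but since every structural constraint is checked against a target built out of copies of $\e$, whose components are singletons after applying $\e_0$, every verification collapses to triviality. In short, the entire proposition is an instance of "the terminal object, suitably linearized, is a bialgebra and everything maps to it," and the proof is a sequence of diagram-chases each of which is automatic because a map into a singleton (or its linearization) is unique.
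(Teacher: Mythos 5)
Your proposal is correct and follows essentially the same route as the paper: everything is deduced from the fact that each $\e_0[I]$ is a singleton, so the bialgebra structure, the morphism $\x_0\to\e_0$, and its compatibility with the coproducts and counits are all forced, i.e.\ $\e$ is terminal among linearized coalgebras. Your explicit construction of the bicomodule via $(\pi\cdot\mathrm{id})\circ\Delta$ and $(\mathrm{id}\cdot\pi)\circ\Delta$ only spells out the step the paper defers to the subsequent proposition (with a reference to Aguiar--Mahajan), so no substantive difference.
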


\begin{proof}
If $S$ is a singleton set and $X$ is any set, there is a unique function 
$X\longrightarrow S$, and it follows from this, first, that the bialgebras 
structure defined on $\e$ is the only linearized bialgebra structure, and, second, 
that if $\x$ is a species in $\Sp$, there is a unique morphism of species 
$\x\longrightarrow \e$. If $\x$ is a pre-coalgebra in $\Sp$, the following square 
commutes because $\e[S]\times \e[T]$ has one element:
$$
\begin{tikzcd} 
\e[I]
	\arrow{r}{\Delta}
& \e[S]\times \e[T] 
	\\
\x[I]
	\arrow{u}
	\arrow{r}{\Delta}
& \x[S]\times\x[T],
	\arrow[swap]{u}
\end{tikzcd}
$$
and, by the same reason, $\x\longrightarrow \e$ is pre-counital. All this shows 
that the exponential species $\e$ is terminal in the category of linearized 
coalgebras. This completes the proof.
\end{proof}

\subsection{Representations of the exponential species.}
We will fix some useful notation to deal 
with coalgebras. Let $\x=\kk\x_0$ be a 
linearized species that is a coalgebra 
in $\Sp_\kk$. If $z$ is an element of $\x_0[I]$, 
we write 
\begin{equation}
\Delta(I)(z) = 
\sum z\bs S\otimes z\fs T 
\end{equation} with $z\bs S \tt z\fs 
T$ denoting an element of $\x[S] \tt\x[T]$  
\`a la Sweedler, not necessarily an elementary tensor.
Consider now a left $\e$-comodule $\x$ with coaction $\lambda : \x\longrightarrow 
\e\cdot \x$. Since $\e[S]= \kk e_S$, the component $\x[I]\longrightarrow \e[S]\tt 
\x[T]$ can canonically be viewed as map $\x[I]\longrightarrow\x[T]$ which we 
denote by $\lambda_T^I$, and call the it \emph{the restriction from $I$ to $T$ 
to the right}. 

In these terms, that $\lambda$ be counital means $\lambda_I^I$ is the identity 
for all finite sets $I$, and the equality $1\tt \lambda\circ \lambda = \Delta\tt 
1 \circ \lambda$, which expresses the coassociativity of $\lambda$, translates 
to the condition that we have $\lambda_A^I = \lambda_A^B\circ \lambda_B^I$ for 
any chain of finite sets $A\subseteq B\subseteq I$. Let
us write
$\FSet^{\mathrm {inc}}$ for the category of finite sets and inclusions. This discussion leads to the
following:

\begin{prop}
The category of left 
$\e$-comodules  in $\Sp_\kk$ is
equivalent to the category of
pre-sheaves \[ 
\FSet^{\mathrm {inc}}\longrightarrow  \Mod.\]

\end{prop}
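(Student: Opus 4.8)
The plan is to set up an explicit equivalence of categories by unpacking the data of a left $\e$-comodule in terms of the ``restriction maps'' $\lambda_T^I$ already introduced, and then recognising those maps as exactly the structure of a presheaf on $\FSet^{\mathrm{inc}}$. First I would observe that a left coaction $\lambda : \x \longrightarrow \e\cdot \x$ is, by definition of the Cauchy product, a collection of components $\lambda(S,T) : \x[I] \longrightarrow \e[S]\tt\x[T]$ indexed by decompositions $(S,T)\vdash I$, natural in $I$. Since $\e[S] = \kk e_S$ is canonically isomorphic to $\kk$, each such component is canonically a map $\lambda_T^I : \x[I]\longrightarrow \x[T]$; moreover $S$ is determined by $T$ as the complement $I\setminus T$, so the indexing data is really just a pair $T\subseteq I$, i.e.\ an arrow $T\hookrightarrow I$ in $\FSet^{\mathrm{inc}}$. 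This is the bijection on objects-of-structure; I would then invoke the discussion immediately preceding the statement to identify the comodule axioms with the presheaf axioms: counitality ($\lambda_I^I = \mathrm{id}$) is the identity-preservation of a functor, and coassociativity ($\lambda_A^I = \lambda_A^B\circ\lambda_B^I$ for $A\subseteq B\subseteq I$) is contravariant functoriality, noting the order reversal because the composite inclusion $A\hookrightarrow I$ factors as $A\hookrightarrow B\hookrightarrow I$.

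The one genuinely non-vacuous point is the naturality of $\lambda$ in $I\in\GSet$, i.e.\ equivariance under bijections, and how it interacts with passing to the presheaf. A species over $\Mod$ already carries, for every bijection $\sigma : I\to J$, an isomorphism $\x[\sigma]$, and naturality of $\lambda$ says these are compatible with the restriction maps via the induced bijections $\sigma_T : T\to\sigma(T)$. On the presheaf side this should correspond to the fact that a presheaf on $\FSet^{\mathrm{inc}}$ automatically ``knows'' about all bijections: any bijection $\sigma:I\to J$ factors through inclusions and isomorphisms, but more simply, $\FSet^{\mathrm{inc}}$ is not a groupoid, so I should be a little careful about what ``presheaf'' is claimed to encode. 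The cleanest route is to note that every morphism in $\FSet^{\mathrm{inc}}$ is an injection, every injection $T\hookrightarrow I$ factors (uniquely up to the choice of image) as a bijection onto a subset followed by a subset inclusion, and a presheaf $F$ therefore determines both restriction-to-subsets and the $\GVec$-action, with the species axioms and the comodule axioms matching the functoriality of $F$ on these two classes of maps. So I would phrase the equivalence as: the data of $(\x,\lambda)$ is the data of a functor $F:(\FSet^{\mathrm{inc}})^{\mathrm{op}}\to\Mod$ with $F(I)=\x[I]$ and $F(T\hookrightarrow I) = \lambda_T^I$, and conversely.

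Having matched objects, I would construct the functor in both directions on morphisms. A morphism of left $\e$-comodules $f:\x\to\y$ is a morphism of species commuting with the coactions; unwinding, this says $f_I : \x[I]\to\y[I]$ commutes with all $\lambda_T^I$, which is exactly a natural transformation of the associated presheaves. Conversely a natural transformation of presheaves gives componentwise maps commuting with restrictions and with the induced bijection-action, hence a morphism of comodules. These two assignments are mutually inverse essentially by construction, since nothing was lost in either unpacking, giving the claimed equivalence. I would then remark that the symmetric statements for right $\e$-comodules and for $\e$-bicomodules follow by the same bookkeeping (a right coaction gives ``restriction to the left'' maps, and a bicomodule structure gives a presheaf on a suitable category of finite sets with two compatible families of restrictions), which is useful for the sequel.

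The main obstacle I expect is purely notational/organisational rather than mathematical: being scrupulous about the direction of arrows (the contravariance, so that $\subseteq$ becomes composition in the correct order) and about the interaction between the $\GVec$-action built into ``species'' and the subset-restriction maps built into ``comodule'', so that the resulting object is genuinely a presheaf on $\FSet^{\mathrm{inc}}$ and not, say, on the category of finite sets and arbitrary injections with a separate group action. Everything else — counitality $\leftrightarrow$ $F(\mathrm{id})=\mathrm{id}$, coassociativity $\leftrightarrow$ $F(g)F(f)=F(g\circ f)$, and the fact that the two constructions are mutually inverse — is a direct translation with no hidden content, so I would present it concisely and not belabour the diagram chases.
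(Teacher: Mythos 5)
Your proposal is correct and takes essentially the same route as the paper, which offers no separate proof beyond the preceding discussion: translate the coaction into the restriction maps $\lambda^I_T$ (with $S=I\smallsetminus T$ redundant), match counitality with preservation of identities and coassociativity with functoriality, and observe that morphisms of comodules are exactly natural transformations. The one point you rightly single out---how the bijection-equivariance built into the species is encoded on the presheaf side---is resolved exactly as you suggest: $\FSet^{\mathrm{inc}}$ must be read as finite sets and injections (as the paper's follow-up identification of these presheaves with FI-modules confirms), so that every injection factors uniquely as a bijection onto its image followed by a subset inclusion, and the presheaf thereby records both the restriction maps and the symmetric group actions, rather than the literal poset of inclusions, for which the stated equivalence would lose the group actions.
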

These are usually called FI-modules in the
literature, see for example~\cite{FIMODS}. 
 When convenient, we will write 
$z\fs S$ for $\lambda_S^I(z)$ without explicit mention to $I$, which will 
usually be understood from context. Using this notation, we can write the 
coaction on $\x$ as
\[ \lambda(I)(z) = \sum e_S \tt z\fs T.\]

{{}}
Of course the same consideration apply to a right $\e$-comodule, and we write 
$z\bs T$ for $\rho_T^I(z)$. If $\x$ is both a left and a right $\e$-comodule with 
coactions $\lambda$ and $\rho$, the compatibility condition for it to be an 
$\e$-bicomodule is that, for any finite set $I$ and pair of non-necessarily 
disjoint subsets $S,T$ of $I$, we have $\rho_{S\cap T}^{S} \lambda^I_{S} = 
\lambda_{S\cap T}^T \rho_T^I$. 
In a completely analogous fashion, 
there is a category $\FSet^{\mathrm {binc}}$,
and the category of $\e$-bicomodules is 
equivalent to the category of pre-sheaves \[\FSet^{\mathrm {binc}} \longrightarrow \Sp_\kk.\]
We leave its construction to the categorically inclined reader.
There is a close relation between linearized coalgebras and linearized 
$\e$-bicomodules, as described in the following proposition. In fact, all the bicomodules we
are interested in arise from the construction 
in this proposition.

\begin{prop}
Let $(\z,\Delta)$ be a linearized coalgebra, and let $f_\z : \z\longrightarrow \e$ be 
the unique morphism of linearized coalgebras described in 
Proposition~\ref{prop:terminalE}. There is on $\z$ an $\e$-bicomodule structure so
that the coactions $\lambda : \z\longrightarrow \e\cdot \z$ and $\rho : 
\z\longrightarrow \z\cdot \e$ are obtained from postcomposition of $\Delta$ with 
$f_\z\tt 1$ and $1\tt f_\z$, respectively. \qed
\end{prop}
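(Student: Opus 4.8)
Set $\lambda:=(f_\z\tt 1)\circ\Delta\colon\z\to\e\cdot\z$ and $\rho:=(1\tt f_\z)\circ\Delta\colon\z\to\z\cdot\e$; the plan is to take these as the coactions and to verify the counit, coassociativity and bicomodule-compatibility axioms by the evident diagram chases, each of which reduces to coassociativity and counitality of $\Delta$ together with the two identities defining a morphism of coalgebras, namely $\varepsilon_\e\circ f_\z=\varepsilon_\z$ and $\Delta_\e\circ f_\z=(f_\z\tt f_\z)\circ\Delta$. Conceptually, one is merely corestricting along $f_\z$ the bicomodule structure that $\z$ carries over itself via $\Delta$; since $\Sp_\kk$ with the Cauchy product is a monoidal category whose coherence isomorphisms behave exactly as for $\kk$-modules, this corestriction automatically produces an $\e$-bicomodule, but the direct check is just as short.

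For counitality one applies $\varepsilon_\e\tt 1$ to $\lambda$ and uses $\varepsilon_\e\circ f_\z=\varepsilon_\z$ to get $(\varepsilon_\e\tt 1)\circ\lambda=(\varepsilon_\z\tt 1)\circ\Delta=\mathrm{id}_\z$, the last equality being counitality of $\Delta$; the case of $\rho$ is symmetric. For coassociativity of the left coaction one computes, using first that $f_\z$ is a coalgebra morphism and then coassociativity of $\Delta$,
\[
(\Delta_\e\tt 1)\circ\lambda=(f_\z\tt f_\z\tt 1)\circ(\Delta\tt 1)\circ\Delta=(f_\z\tt f_\z\tt 1)\circ(1\tt\Delta)\circ\Delta=(f_\z\tt\lambda)\circ\Delta,
\]
and the right-hand side is also visibly equal to $(1\tt\lambda)\circ\lambda$; the coaction $\rho$ is handled identically.

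It remains to check the bicomodule compatibility $(1\tt\rho)\circ\lambda=(\lambda\tt 1)\circ\rho$ as maps $\z\to\e\cdot\z\cdot\e$. Expanding both composites by bifunctoriality of the Cauchy product gives
\[
(1\tt\rho)\circ\lambda=(f_\z\tt 1\tt f_\z)\circ(1\tt\Delta)\circ\Delta,\qquad (\lambda\tt 1)\circ\rho=(f_\z\tt 1\tt f_\z)\circ(\Delta\tt 1)\circ\Delta,
\]
and these coincide by coassociativity of $\Delta$; in the presheaf description of $\e$-bicomodules this is exactly the relation $\rho_{S\cap T}^{S}\lambda_{S}^{I}=\lambda_{S\cap T}^{T}\rho_{T}^{I}$. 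Finally, unwinding the definitions at a finite set $I$ in the restriction notation of the previous subsection, $\lambda$ sends $z$ to $\sum e_S\tt z\fs T$ with $z\fs T$ obtained by applying $f_\z$ to the left tensor factor of $\Delta(S,T)(z)$, so the bicomodule produced is precisely the one whose one-sided restriction maps are those induced by the comultiplication of $\z$. The only mild nuisance is keeping track of the associativity constraint of $\cdot$ when writing out the triple coactions above, but this is purely formal, so I do not anticipate any genuine obstacle.
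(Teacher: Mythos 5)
Your proposal is correct: the verification of counitality, coassociativity, and the bicomodule compatibility is exactly the standard corestriction-of-the-regular-bicomodule argument along the coalgebra morphism $f_\z$, using only coassociativity and counitality of $\Delta$ together with $\varepsilon_\e\circ f_\z=\varepsilon_\z$ and $\Delta_\e\circ f_\z=(f_\z\tt f_\z)\circ\Delta$. The paper itself gives no proof and simply refers to Aguiar--Mahajan (Chapter 8, \S 3, Proposition 29), and your diagram chase is the expected argument behind that citation, so there is nothing to add.
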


We refer the reader to \cite{Aguiar}*{Chapter 8, \S 3, Proposition 29}. Remark 
that, with this proposition at hand, the notation introduced for bicomodules and 
that introduced for coalgebras is consistent.

 	\section{The cohomology of coalgebras}\label{ch:thecohomology}
\subsection{Definitions and first examples.}
Let $\cc$ be a coalgebra and $\x$ a $\cc$-bicomodule, and let us define a
cosimplicial $\kk$-module
$\CX^*(\x,\cc)$ as follows.
For each $n\in\NN$: 
\begin{tenumerate}
\item Define $\CX^n(\x,\cc)$ to
be $\hom_{\mathsf{Sp}_k}(\x,\cc^{\cdot n} )$
 the set of maps in  $\mathsf{Sp}_k$
 from $\x$ to the iterated tensor
 product $\cc^{\cdot n}$.
 \item For $0<i<n+1$, consider the map
 $d^i : 
 \CX^n(\x,\cc) \longrightarrow 
 \CX^{n+1}(\x,\cc)$
 induced by post-composition with the
 coproduct of $\cc$ at the $i$th
 position. 
 \item For $i=0,n+1$, let
 $d^0,d^{n+1} : 
 \CX^n(\x,\cc) \longrightarrow 
 \CX^{n+1}(\x,\cc)$ 
 be the maps obtained post composing
 with the left and right 
 comodule maps of $\x$, respectively. 
\end{tenumerate}

It is straightforward to check that,
by virtue of the coassociativity of
$\cc$ and the bicomodule axioms,
the maps above satisfy the usual 
cosimplicial identities. It follows
that if for each $n\in \NN$ we 
define the alternated sum $\delta^n=\sum_{i=0}^{n+1} (-1)^i d^i$, we
obtain a cohomologically graded complex 
$(\CX^*(\x,\cc),\delta^*)$. 

\begin{deff}
The \emph{cohomology of $\x$ with values in $\cc$} is the cohomology
of the cochain complex $(\CX^*(\x,\cc),\delta^*)$, and we denote it by $H^*(\x,\cc)$. 
\end{deff}

The homologically inclined reader will
notice that these cohomology
groups are equal to the groups $\Ext^*(\x,\cc)
 $
with the $\Ext$ taken in the category of $\cc$-bicomodules~\cite{DOI1981}.
In the following we will mainly consider the case in which 
$\cc$ is the exponential species, but will make it clear 
when a certain result can be extended to other 
coalgebras. Usually, it will be necessary that $\cc$ is 
linearized and with a linearized bialgebra structure, and we 
will usually require that $\cc$ be connected. Because of
the plethora of relevant examples of such bialgebras found 
in~\cite{Aguiar} and other articles by the same authors, such 
as~\cite{AguiarHopf}, there is no harm in restricting ourselves
to such species.

\subsection{The theory for the exponential species.}
Fix an $\e$-bicomodule $\x$. The complex $\CX^*(\x,\e)$, which 
we will denote more simply by $\CX^*(\x)$, has in degree $q$ the 
collection of morphisms of species $\alpha : \x\longrightarrow \e^{\cdot q}$. Such a 
morphism is determined by a collection of $k$-linear maps $\alpha(I) : 
\x[I]\longrightarrow \e^{\cdot q}[I]$, one for each finite set $I$, which is 
equivariant, in the sense that for each bijection $\sigma : I\longrightarrow J$ 
between finite sets, and every $z\in\x[I]$, the equality $\sigma(\alpha(I)(z)) = 
\alpha(J)(\sigma z)$ holds. More generally:

\begin{prop} If 
$\x,\y_1,\ldots,\y_r$ are linear species, a map of species $\alpha : 
\x\longrightarrow \y_1 \cdot \ldots\cdot \y_r$ determines and is determined by 
a choice of equivariant $\kk$-module maps 
\[ \alpha(I):\x[I]\longrightarrow \bigoplus_{(S_1,\ldots,S_r)} \y_1(S_1)\otimes\cdots\otimes 
\y_r(S_r),\]
one for each finite set $I$, where the direct sum runs through decompositions 
$(S_1,\ldots,S_r)$ of length $r$~of the finite set $I$. \qed
\end{prop}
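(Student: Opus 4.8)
The plan is to unwind the definition of the Cauchy product of several species and then apply the universal property of direct sums together with the adjunction between hom-sets of species and equivariant families of $\kk$-module maps. First I would recall that, by the iterated definition of the Cauchy product, for a finite set $I$ one has
\[
(\y_1\cdot\ldots\cdot\y_r)[I] \;=\; \bigoplus_{(S_1,\ldots,S_r)\vdash_r I} \y_1(S_1)\otimes\cdots\otimes\y_r(S_r),
\]
the sum running over length-$r$ decompositions of $I$; this follows by an easy induction from the length-two case, using that tensor product of $\kk$-modules commutes with finite direct sums, exactly as remarked after the definition of the Cauchy product. So the target species $\y_1\cdot\ldots\cdot\y_r$ is completely described componentwise by these direct sums, with the symmetric group $\Aut(I)$ acting by permuting the decompositions and acting on each summand through the $\y_j$.

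Next I would invoke the standard description of morphisms in $\Sp_\kk = \operatorname{Fun}(\GSet,\Mod)$: a natural transformation $\alpha : \x\longrightarrow \z$ is precisely a family of $\kk$-linear maps $\alpha(I):\x[I]\longrightarrow\z[I]$, one for each finite set $I$, subject to the naturality (equivariance) condition $\z[\sigma]\circ\alpha(I) = \alpha(J)\circ\x[\sigma]$ for every bijection $\sigma:I\longrightarrow J$. Specializing $\z = \y_1\cdot\ldots\cdot\y_r$ and substituting the direct-sum formula above gives exactly the claimed bijection between maps of species $\alpha:\x\longrightarrow\y_1\cdot\ldots\cdot\y_r$ and equivariant families
\[
\alpha(I):\x[I]\longrightarrow\bigoplus_{(S_1,\ldots,S_r)}\y_1(S_1)\otimes\cdots\otimes\y_r(S_r).
\]
Since the functor is only being evaluated on objects and bijections, no further compatibility beyond equivariance is needed, which is what makes the statement a pure unwinding.

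There is not really a hard step here --- the proposition is essentially a bookkeeping lemma --- but the one point deserving care is making the equivariance condition explicit and checking it is the \emph{only} constraint. Concretely, one must observe that a bijection $\sigma:I\longrightarrow J$ sends the summand indexed by $(S_1,\ldots,S_r)$ isomorphically onto the summand indexed by $(\sigma(S_1),\ldots,\sigma(S_r))$ via $\y_1[\sigma_{S_1}]\otimes\cdots\otimes\y_r[\sigma_{S_r}]$, using the notation $\sigma_T$ for the induced bijection fixed in the Conventions; then naturality of $\alpha$ with respect to $\GSet$ becomes precisely the stated equivariance of the family $\{\alpha(I)\}$, and conversely any such equivariant family assembles into a natural transformation because $\GSet$ is a groupoid generated (up to isomorphism) by the skeleton $\{[n]\}$ and the symmetric groups $S_n$. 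Hence the correspondence $\alpha\leftrightarrow\{\alpha(I)\}$ is a bijection, which is all that is claimed, so the proof is just these two observations spelled out; I would present it in one or two short sentences and append \qed, matching the terseness already used for the preceding propositions in the excerpt.
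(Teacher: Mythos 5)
Your proposal is correct and matches the paper's (implicit) reasoning: the paper states this proposition with a \qed precisely because it is the bookkeeping unwinding you describe, namely the componentwise description of the iterated Cauchy product together with the fact that a morphism of species is an equivariant family of $\kk$-linear maps. Nothing further is needed.
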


{{}} The map $\alpha(I)$ is 
specified uniquely by its \emph{components} at each decomposition 
$S=(S_1,\ldots,S_r)$, which we denote $\alpha(S_1,\ldots,S_r)$ without further 
mention to the set $I$ which is implicit. Moreover, 
it suffices to specify $\alpha_I$ for $I$ the sets $\simp{n}$ with $n\in \mathbb 
N_0$.  This said, we will usually define a map $\alpha : \x \longrightarrow \y_1 
\cdot \ldots \cdot \y_r$ by specifying its components at each decomposition of $I$ of length~$r$.   

For each finite
set $I$, the space $\e^{\cdot q}[I]$ is a free $\kk$-module with basis the tensors of the form 
$ F_1 \tt \cdots \tt F_q$ with $F=(F_1,\ldots,F_q)$ a decomposition of $I$; for 
simplicity, we use the latter notation for such basis elements. In terms of this 
basis, we can write
\[ \alpha(I)(z) = \sum_{F\vdash_q I} \alpha(F)(z)\, F \]
where $\alpha(F)(z)\in \kk$. 
Observe that the equivariance condition says that, for a bijection 
$\sigma : I\longrightarrow J$, and $(F_1,\ldots,F_q)$ a decomposition of $I$, we 
have
\[ \alpha(F_1,\ldots,F_q)(z) = \alpha(\sigma(F_1),\ldots,\sigma(F_q))(\sigma 
z)\]
for each $z\in\x[I]$.
Now fix a $q$-cochain $\alpha : \x\longrightarrow \e^{\cdot q}$ in $\CX^*(\x)$. By 
the remarks in the last paragraph, to determine the $(q+1)$-cochain $\delta\alpha : 
\x\longrightarrow \e^{\cdot (q+1)}$ it is enough to determine its components. 

\begin{lema} For each 
decomposition $F=(F_0,\ldots,F_q)$ of a set $I$, then the component of 
the $i$th coface $d_i \alpha$ at $F$ is given, for $z\in\x[I]$, by
\begin{equation}
\label{eq:cofacemaps}
(d^i \alpha)(F_0,\ldots,F_q)(z)=
\begin{cases}
 \alpha(F_1,\ldots,F_q)(z\fs F_0^c) & \text{ if $i=0$,} \\
\alpha(F_0,\ldots,F_i\cup F_{i+1},\ldots,F_{q+1})(z) & \text{ if $\;0<i<q+1$,} \\
\alpha(F_0,\ldots,F_{q-1})(z\bs F_q^c) & \text{ if  $i=q+1$}.
\end{cases}
\end{equation}
\end{lema}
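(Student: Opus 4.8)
The plan is to unwind the definition of each coface map $d^i$ and match it against the description of maps into a Cauchy power of $\e$ given by the preceding proposition. Recall that a map $\beta : \x \longrightarrow \e^{\cdot(q+1)}$ is determined by its components $\beta(F_0,\ldots,F_q)(z) \in \kk$ at each decomposition $F = (F_0,\ldots,F_q)$ of a finite set $I$, via $\beta(I)(z) = \sum_{F \vdash_{q+1} I} \beta(F)(z)\, F$. So the task in each of the three cases is to compute the coefficient of the basis element $F_0 \tt \cdots \tt F_q$ in $(d^i\alpha)(I)(z)$.

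First I would treat the ``inner'' case $0 < i < q+1$, where $d^i$ is post-composition with the coproduct $\Delta$ of $\e$ applied in the $i$th tensor slot. Since $\Delta(S,T)(e_J) = e_S \tt e_T$ for a decomposition $(S,T)$ of $J$, inserting $\Delta$ at position $i$ into the basis element $G_0 \tt \cdots \tt G_q$ of $\e^{\cdot q}[I]$ (with $G = (G_0,\ldots,G_q)$ a decomposition of $I$) splits the block $G_i$ into all ordered pairs $(F_i, F_{i+1})$ with $F_i \sqcup F_{i+1} = G_i$, producing decompositions of length $q+1$. Conversely, a decomposition $(F_0,\ldots,F_{q+1})$ of $I$ arises this way from exactly one decomposition of length $q$, namely the one obtained by merging $F_i$ and $F_{i+1}$. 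Reading off the coefficient of $F_0 \tt \cdots \tt F_{q+1}$ then gives $\alpha(F_0,\ldots,F_i \cup F_{i+1},\ldots,F_{q+1})(z)$, as claimed. For $i = 0$, the map $d^0$ is post-composition with the left coaction $\lambda : \x \longrightarrow \e \cdot \x$, which in the restriction notation reads $\lambda(I)(z) = \sum_{S \subseteq I} e_S \tt z\fs S^c$; applying $\alpha$ in the remaining $q$ slots and extracting the coefficient of $F_0 \tt \cdots \tt F_q$ forces $S = F_0$, leaving $\alpha(F_1,\ldots,F_q)(z\fs F_0^c)$. The case $i = q+1$ is the mirror image using the right coaction $\rho$ and the notation $z\bs S$.

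The only genuinely delicate point is bookkeeping with the restriction notation $z\fs T$ and $z\bs S$: one must be careful that in $d^0$ the surviving part of $z$ that feeds into the remaining copies of $\e$ is the corestriction to the complement $F_0^c = F_1 \cup \cdots \cup F_q$, and dually for $d^{q+1}$. This is where the conventions fixed in the section on representations of the exponential species (namely $\lambda(I)(z) = \sum e_S \tt z\fs T$ over $(S,T) \vdash I$, and the analogue for $\rho$) are used to pin down the indices; once those are in place the computation is a direct coefficient extraction. I expect no real obstacle beyond organizing these index manipulations cleanly, so the proof is essentially a careful verification rather than a conceptual argument — I would likely present the inner case in full detail and indicate that the boundary cases $i = 0$ and $i = q+1$ follow symmetrically from the definitions of $\lambda$ and $\rho$.
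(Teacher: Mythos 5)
Your proposal is correct and follows essentially the same route as the paper's proof: unwinding each coface map (the left coaction for $i=0$, the coproduct of $\e$ in the $i$th slot for the inner cases, the right coaction for $i=q+1$) and extracting the coefficient at a fixed decomposition, using the uniqueness of the length-$q$ decomposition obtained by merging $F_i$ and $F_{i+1}$. The only differences are presentational (you spell out the merging/splitting bijection slightly more explicitly), so there is nothing to add.
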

\begin{proof}
Indeed, let us follow the
prescription above  
and compute each coface map explicitly. If $z\in\x[I]$, to compute 
$d^0\alpha(z)$, we must coact on $z$ to the left and evaluate the result 
at $\alpha$, that is
\[ (1\tt \alpha\circ \lambda)(I)(z) = \sum_{(S,T)\vdash I} e_S \otimes 
\alpha(T)(z\fs T), \] and the coefficient at a decomposition 
$F=(F_0,\ldots,F_q)$ is $\alpha(F_1,\ldots,F_q)(z\fs F_0^c)$. The same argument 
gives the last coface map. Now consider $0<i<q+1$, so that we must take $z\in 
\x[I]$, apply $\alpha$, and then comultiply the result at coordinate $i$. 
Concretely, write \[ \alpha(I)(z) = \sum_{F\vdash_q I} \alpha(I)(F)(z) F\] and 
pick a decomposition $F' = (F_0,\ldots,F_q)$ into $q+1$ blocks of $I$. There 
exists then a unique $F\vdash_q I$ such that $1^{i-1}\tt \Delta \tt 1^{q-i}(F) = 
F'$, to wit, $F=(F_0,\ldots,F_i\cup F_{i+1},\ldots,F_q)$, and in this way we 
obtain the formulas of Equation~\eqref{eq:cofacemaps}. 
\end{proof}

Since $\e$ is counital, the complex
above admits codegeneracy maps, 
which are much
 easier to describe: they are obtained by inserting an 
empty block into a decomposition. Concretely, for each $j\in \{ 0,\ldots,q+1\}$, 
\[(\sigma^j\alpha)(F_1,\ldots,F_q)(z) = 
\alpha(F_1,\ldots,F_j,\varnothing,F_{j+1},\ldots,F_q).\] As a consequence of 
this, a cochain $\alpha : \x \longrightarrow \e^{\cdot q}$ in $\CX^*(\x)$ is in the 
normalized subcomplex $\overline{\CX}^*(\x)$ if its components are such that 
$\alpha(F)(z)=0\in k$ whenever $F$ contains an empty block. Alternatively, we 
can construct a (non-unital) coalgebra $\overline \e$ with  $\overline{\e}(\varnothing)=0$ and 
$\overline{\e}[I]=\e[I]$ whenever $I$ is nonempty, and describe the normalized complex 
$\overline{\CX}^*(\x)$ as the complex of maps $\x\longrightarrow \overline{\e}^{\cdot *}$ 
with differential induced by the alternating sum of the coface maps we just 
described. 

\begin{remark} For each
finite set $I$ the space
$\overline{\e}^{\cdot q}[I]$ has basis the \emph{compositions} of 
$I$ into $q$ blocks, while $\e^{\cdot q}[I]$ has basis the \emph{decompositions} of 
$I$ into $q$ blocks. In particular, $\overline{\e}^{\cdot q}[I]=0$ if $q>\# I$, while 
$\e^{\cdot q}[I]$ is always nonzero. This observation will be useful in 
Section~\ref{ch:theCC}.
\end{remark}

\subsection{The cobar complex and cup products.}\label{sec:thecup}  

Since the coalgebra $\e$ is,
in fact, a cocommutative Hopf algebra, we
can endow the complex $\CX^*(\x)$
with the structure of a dga algebra
and hence produce on the cohomology
groups $H^*(\x,\e)$ a structure
of an associative algebra,
as follows.

{{}} 
First, let us give an alternative
way of constructing the complex
$\CX^*(\x)$. Let $\Omega^*(\e)$ denote
the cobar construction on the coalgebra
$\e$ (see ~\cite{LV}*{Chapter 3} for the
case of usual associative algebras). This is a dg algebra
which is freely generated by $s^{-1}\overline{\e}$, 
the shift of the species $\e$ without the counit, and
whose differential is induced from
the coproduct of $\e$: it is the unique
coderivation extending the map
\[\Delta : s^{-1}\overline{\e}
 	\longrightarrow  (s^{-1}
 		\overline{\e})^{\cdot 2}
 		\subseteq \Omega^*(\e).\]
We can then form the space
\[ \hom_{\Sp_\kk}(\x,\Omega^*(\e)) \]
which, as a graded vector space,
coincides with the normalized 
complex for $\CX^*(\x,\e)$: the
way we shifted $\e$ makes sure that
maps $\x\longrightarrow \overline{\e}^{\tt n}$ live in degree $n$. Observe,
moreover, that this hom-set above
inherits a differential $\delta_1$ by
post-composition with the differential
\[ d: \Omega^*(\e) 
	\longrightarrow \Omega^{*+1}(\e), \]
and this coincides in fact with the
internal sum of the coface maps above,
omitting the endpoints $0$ and $n+1$. 
To obtain the full differential $\delta$, 
we consider the canonical degree $-1$ injection
$\tau : \e\longrightarrow \Omega^*(\e)$  
and the differential $\delta_2$ 
obtained by the following
composition where $p$ is the
degree of $\varphi: \x \longrightarrow 
\e^{\otimes p}$:
\[ \delta_2(\varphi) =  
\mu_{\Omega^*(\e)}(\varphi\otimes\tau\circ\lambda
	 +(-1)^{p} \tau\otimes\varphi\circ\rho). \]
A perhaps tedious but straightforward
computation shows that $\delta_1-\delta_2$
coincides with $\delta$, so that we obtain a 
new description of the complex $\CX^*(\x,\e)$ 
as a complex `twisted' by $\tau$ (the summand 
$\delta_2$ is the twist determined by $\tau$):
\[ \CX^*(\x,\e) = 
	\left(\hom_\tau(\x,\Omega^*(\e)), 
			\delta_1-\delta_2\right). \] 

\begin{prop}
The dg coalgebra $\Omega^*(\e)$ is in fact
a dg bialgebra if we endow it with
the \emph{shuffle product} induced
from the cocommutative coproduct of
$\e$, which we will denote by
$\Delta_{\Omega^*(\e)}$.
\end{prop}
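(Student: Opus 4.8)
The plan is to leave the dg algebra structure of $\Omega^*(\e)$ untouched and to supply the missing coproduct using freeness. Recall that, forgetting the differential, $\Omega^*(\e)$ is the \emph{free} graded algebra on $s^{-1}\overline{\e}$, with concatenation product $\mu_{\Omega^*(\e)}$; write $[e_{I_1}\mid\cdots\mid e_{I_n}]$ for the word $s^{-1}e_{I_1}\cdots s^{-1}e_{I_n}$, so that the generators are the $[e_I]$, and recall that the cobar differential $d$ is the unique derivation of $\mu_{\Omega^*(\e)}$ with $d[e_I]=\pm\sum_{(S,T)}[e_S\mid e_T]$, the sum over decompositions of $I$ into two nonempty blocks, the global sign coming from the suspension convention. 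Since $\Sp_\kk$ is symmetric monoidal, the Cauchy product $\Omega^*(\e)\cdot\Omega^*(\e)$ carries the usual tensor-product algebra structure, and I would define
\[
\Delta_{\Omega^*(\e)}\colon\Omega^*(\e)\longrightarrow\Omega^*(\e)\cdot\Omega^*(\e)
\]
to be the unique morphism of graded algebras declaring the generators primitive, $[e_I]\longmapsto[e_I]\otimes\bb 1+\bb 1\otimes[e_I]$. Extending this multiplicatively along concatenation yields, on a word $[e_{I_1}\mid\cdots\mid e_{I_n}]$, precisely the signed sum over $(p,n-p)$-unshuffles of its blocks, which is why I would call $\Delta_{\Omega^*(\e)}$ the shuffle coproduct. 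For the counit I would take the augmentation $\varepsilon_{\Omega^*(\e)}\colon\Omega^*(\e)\to\bb 1$, visibly an algebra map, and a chain map since $d$ strictly raises word length. Among the dg-bialgebra axioms, the multiplicativity of $\Delta_{\Omega^*(\e)}$ and $\varepsilon_{\Omega^*(\e)}$ then holds by construction, so it remains to check (a) that $\Delta_{\Omega^*(\e)}$ is coassociative and counital, and (b) that $d$ is a coderivation of $\Delta_{\Omega^*(\e)}$.

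Point (a) is formal: $(\Omega^*(\e),\mu_{\Omega^*(\e)},\Delta_{\Omega^*(\e)},\varepsilon_{\Omega^*(\e)})$ is nothing but the standard ``tensor bialgebra'' structure carried by the free algebra on a graded object of $\Sp_\kk$, i.e.\ concatenation together with the shuffle coproduct, and coassociativity, counitality and bialgebra compatibility are the familiar shuffle identities, valid in any symmetric monoidal category --- here with the symmetry of the Cauchy product, which combines the transposition of the two blocks of a decomposition with a Koszul sign. If one prefers an in-line check: both sides of the coassociativity equation are morphisms of graded algebras out of the free algebra $\Omega^*(\e)$, hence agree as soon as they agree on $s^{-1}\overline{\e}$, where the identity is immediate since the generators are primitive; counitality is verified the same way. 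So $\Omega^*(\e)$ is a graded bialgebra in $\Sp_\kk$, with no hypothesis on $\e$ used yet.

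Point (b) is where cocommutativity of the coproduct of $\e$ is used. Using freeness once more: both $\Delta_{\Omega^*(\e)}\circ d$ and $\Delta_{\Omega^*(\e)}$ followed by the differential of $\Omega^*(\e)\cdot\Omega^*(\e)$ are derivations of $\mu_{\Omega^*(\e)}$ with values in $\Omega^*(\e)\cdot\Omega^*(\e)$, viewed as an $\Omega^*(\e)$-bimodule along $\Delta_{\Omega^*(\e)}$ --- this uses that $d$ is a derivation, that $\Delta_{\Omega^*(\e)}$ is multiplicative, and that the differential of a tensor product of dg algebras is a derivation --- so it suffices to compare them on a single generator $[e_I]$. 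There the left-hand side is $\Delta_{\Omega^*(\e)}$ applied to $\pm\sum_{(S,T)}[e_S\mid e_T]$; expanding via multiplicativity of $\Delta_{\Omega^*(\e)}$ one obtains $\pm\big(\big(\sum[e_S\mid e_T]\big)\otimes\bb 1+\bb 1\otimes\big(\sum[e_S\mid e_T]\big)\big)$, which is exactly what the right-hand side produces, together with the ``cross terms'' $\pm\sum_{(S,T)}\big([e_S]\otimes[e_T]-[e_T]\otimes[e_S]\big)$; and this last sum vanishes because the set of decompositions of $I$ into two nonempty blocks is stable under transposition of the blocks --- that is, by the cocommutativity of the coproduct of $\e$. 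Conceptually, cocommutativity says that $\Delta_\e\colon\e\to\e\cdot\e$ is a morphism of coalgebras, hence $\Omega^*(\Delta_\e)$ is a morphism of dg algebras and $\Delta_{\Omega^*(\e)}$ is its composite with the oplax-monoidal comparison $\Omega^*(\e\cdot\e)\to\Omega^*(\e)\cdot\Omega^*(\e)$; this is the exact dual of the classical fact that the bar construction of a commutative algebra is a dg bialgebra for the shuffle product.

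I expect the only genuinely delicate part of carrying this out to be the generator-level identity in (b), and within it the bookkeeping of Koszul signs: those from the suspension $s^{-1}$ in $d$ and in $\Delta_{\Omega^*(\e)}$, and those from the symmetry of the Cauchy product on $\Omega^*(\e)\cdot\Omega^*(\e)$, where each time a degree-one generator is moved past another a sign appears. One should fix a suspension convention once and for all and then verify that, cocommutativity in hand, the cross terms cancel with the correct sign; everything else reduces, through the freeness of $\Omega^*(\e)$ on $s^{-1}\overline{\e}$, to the bialgebra axioms for $\e$ already established. Finally I would point out that this coproduct $\Delta_{\Omega^*(\e)}$ is exactly the structure needed in the rest of the section to endow $\hom_{\Sp_\kk}(\x,\Omega^*(\e))$, and hence $\CX^*(\x,\e)$, with its cup product.
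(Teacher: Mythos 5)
Your proposal is correct and is in substance the paper's own argument: the paper proves the proposition in one stroke by dualizing the classical statement (McCleary, Chapter 8) that the bar construction of a commutative algebra is a dg bialgebra under the shuffle product, with commutativity --- here cocommutativity of $\e$ --- needed exactly for compatibility with the differential, which is precisely the content of your cross-term cancellation in step (b). The only difference is that you carry out explicitly the verification the paper delegates to duality (freeness of $\Omega^*(\e)$ on $s^{-1}\overline{\e}$, primitivity of generators, the coderivation check on generators), which is a harmless and indeed useful elaboration of the same route.
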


\begin{proof}
This statement is completely dual
to the classical statement (see for
example Chapter 8 in~\cite{McCleary}) that if
$A$ is a commutative algebra then 
the bar construction $BA$ is a 
commutative algebra with the
shuffle product induced from
the commutative product of $A$. 
We remind the reader that it is
crucial that $A$ be commutative
(and hence, in our case, that 
$\e$ be cocommutative) for
this product to be compatible with
the differential of $BA$. 
\end{proof}

\begin{deff}
  We define the external product
   \[ -\times - : \CX^*(\x,\e)\otimes
   	\CX^*(\x,\e)  \longrightarrow 
   		\CX^*(\x\cdot \y,\e)\]
   	so that for two cochains
$\varphi,\psi\in \CX^*(\x,\e) $  we have
$\varphi\times \psi = 
\mu_{\Omega^*(\e)}\circ(\varphi\otimes\psi)$. 
\end{deff}

{{}} 
Note that we use the fact $\e$ is a
Hopf algebra, which implies that the category of $\e$-bicomodules
admits an internal tensor product. Concretely,
if $\x$ and $\y$ are $\e$-bicomodules, we endow the
tensor product $\x\otimes \y$ with the left and right
diagonal actions coming from the product of $\e$.
In case we have a coproduct
 map $\Delta : \x \longrightarrow 
\x \otimes \x$
making $\x$ into a coalgebra in the category of $\e$-bicomodules, we can use
this external product to obtain a cup product
in $\CX^*(\x)$, which we will write 
 \[ -\smile - : \CX^*(\x,\e)\otimes
   	\CX^*(\x,\e)  \longrightarrow 
 		   		\CX^*(\x,\e).\]

\begin{remark}
In general,
the algebra~$H^*(\x)$ will be non-commutative: for example, if $\x$
is concentrated in cardinal zero, then the datum of~$\x$ really amounts to
that of the coalgebra~$\x[\varnothing]$, a coalgebra in~$\kk$-modules, and
$H^*(\x)$ is the algebra dual to it, which may very well be
non-commutative.
\end{remark}

If $\x$ is an $\e$-bicomodule and $\Delta:\x\to\x\cdot\x$ a
morphism of $\e$-bicomodules, we write, for each $I$ and each
$z\in\x[I]$,
  \begin{equation}
  \Delta[I](z) = \sum_{(S,T)\vdash I} z_{(S)}\otimes z^{(T)}
  \end{equation}
\`a la Sweedler, with each summand $z_{(S)}\otimes z^{(T)}$ appearing here
standing for an element ---not necessarily an elementary tensor--- of the
submodule $\x[S]\otimes\x[T]$ of $(\x\cdot\x)[I]$,
as in Equation (1). 
If~$\alpha:\x\to\e^{\cdot p}$ and $\beta:\x\to\e^{\cdot q}$ are a
$p$- and a $q$-cochain in the complex~$C^*(\x)$, then their product
$\alpha\smile\beta\in C^{p+q}(\x)$ has coefficients given by
  \[
  (\alpha\smile\beta)(F)(z) 
    = 
      \alpha(F_{1,p})(z_{(F_{1,p})}) 
      \cdot 
      \beta(F_{p+1,p+q})(z^{(F_{p+1,p+q})})
\]
for all $I$, all decompositions $F=(F_1,\dots,F_{p+q})$ of~$I$ and all
$z\in\x[I]$. Here we are being succinct and writing
$F_{i,i+j}$ for both the decomposition $(F_i,\ldots,F_{i+j})$ obtained from $F$ and for the union of this
decomposition. 
Our main source of examples of coalgebras in $\e$-bicomodules comes from the
following simple observation:

\begin{prop}\label{prop:coalg}
Let $\x_0$ be a nonempty set-valued species with left and right restrictions and let $\x$ be the $\e$-bicomodule
obtained by linearization from~$\x_0$. There is a morphism of
$\e$-bicomodules $\Delta:\x\to\x\otimes\x$ such that
 $
  \Delta[I](z) = \sum_{(S,T)\vdash I}z \bs S\otimes z\fs T
  $
for each finite set $I$ and each $z\in\x_0[I]$. \qed
\end{prop}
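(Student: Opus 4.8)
The plan is to define the candidate map $\Delta$ on each component and then check it is well-defined, equivariant, and compatible with both $\e$-coactions; the cocommutative-Hopf structure of $\e$ guarantees that the diagonal $\e$-bicomodule structure on $\x\otimes\x$ makes sense, so the only real content is verifying that the formula $\Delta[I](z)=\sum_{(S,T)\vdash I} z\bs S\otimes z\fs T$ actually lands in $(\x\cdot\x)[I]$ and intertwines the coactions. First I would note that, since $\x=\kk\x_0$ is linearized and $\x_0$ carries left and right restrictions (in the sense of Proposition~2.? above, i.e. it is a presheaf on $\FSet^{\mathrm{binc}}$), for each decomposition $(S,T)\vdash I$ the element $z\bs S\otimes z\fs T$ is a well-defined elementary tensor in $\x[S]\otimes\x[T]$, because $z\bs S=\rho^I_S(z)\in\x_0[S]$ and $z\fs T=\lambda^I_T(z)\in\x_0[T]$ are honest structures, not Sweedler sums. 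Summing over the finitely many length-two decompositions of $I$ gives an element of $\bigoplus_{(S,T)\vdash I}\x[S]\otimes\x[T]=(\x\cdot\x)[I]$, so $\Delta[I]$ is a perfectly good $\kk$-linear map; extend $\kk$-linearly from the basis $\x_0[I]$.

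Next I would check equivariance, i.e. that the collection $\{\Delta[I]\}_I$ is a morphism of species $\x\to\x\cdot\x$. Given a bijection $\sigma:I\to J$ and $z\in\x_0[I]$, one has $(\x\cdot\x)[\sigma]\big(z\bs S\otimes z\fs T\big)=\sigma z\bs{\sigma(S)}\otimes\sigma z\fs{\sigma(T)}$ because restriction is natural: $\rho$ and $\lambda$ are morphisms of species, so $\rho^J_{\sigma(S)}(\sigma z)=\sigma_S\big(\rho^I_S(z)\big)$ and similarly for $\lambda$. Since $(S,T)\mapsto(\sigma(S),\sigma(T))$ is a bijection on length-two decompositions, reindexing the sum shows $\Delta[J](\sigma z)=(\x\cdot\x)[\sigma]\big(\Delta[I](z)\big)$, which is exactly the naturality square.

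Then I would verify that $\Delta$ is a morphism of $\e$-bicomodules, treating the left coaction (the right one being symmetric). The left coaction on $\x$ sends $z\in\x_0[I]$ to $\sum_{(A,B)\vdash I}e_A\otimes z\fs B$, and the left coaction on $\x\otimes\x$ is the diagonal one: on a decomposition $(S,T)$ of $I$ and an elementary tensor $u\otimes v\in\x[S]\otimes\x[T]$ it uses the product of $\e$ to glue, producing $\sum e_A\otimes\big((u\fs{S\cap B})\otimes(v\fs{T\cap B})\big)$ where $B=A^c$ and the outer sum is over decompositions $(A,B)$ of $I$. Chasing $z$ both ways, on one side we coact first and then apply $\Delta$ on the last tensor factor; on the other we apply $\Delta$ then coact diagonally. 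Equality of the two expressions reduces to the compatibility identity for the presheaf structure on $\x_0$ — namely the relations among $\lambda^{\bullet}_{\bullet}$ and $\rho^{\bullet}_{\bullet}$ stated just before Proposition~2.?, specifically $\lambda^I_B$ restricted to a block intertwines with $\rho$ on subsets, together with the transitivity $\lambda^I_{S\cap B}=\lambda^B_{S\cap B}\circ\lambda^I_B$. The main obstacle, and the only place where anything beyond bookkeeping happens, is keeping the three subsets $S,T$ (from $\Delta$) and $A,B$ (from the coaction) organized and invoking exactly the right instance of the bicomodule compatibility; once the combinatorics of "restrict then split" versus "split then restrict" is written out for a general decomposition, both sides manifestly agree block-by-block, and the analogous check for $\rho$ completes the proof. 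Finally, coassociativity of $\Delta$ itself is not claimed in the statement, so nothing further is needed; the map $\Delta$ is simply the linearization of the set-level "split and restrict" operation, and the proposition follows.
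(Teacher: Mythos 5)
Your first two steps (well-definedness of the formula, which is exactly where the linearization hypothesis enters, and equivariance via naturality of the restrictions) are fine; the paper gives no proof of this proposition at all, so the only question is whether your verification of the bicomodule condition goes through. It does not, as you have set it up. You equip $\x\cdot\x$ with the \emph{diagonal} left coaction, sending $u\otimes v\in\x[S]\otimes\x[T]$ to $\sum_{(A,B)\vdash I}e_A\otimes u\fs (S\cap B)\otimes v\fs (T\cap B)$, and claim the two composites agree block by block. They do not: at a fixed component $\e[A]\otimes\x[B_1]\otimes\x[B_2]$ of $\e\cdot\x\cdot\x$, the composite $\lambda_{\x\cdot\x}\circ\Delta$ picks up one contribution from \emph{every} summand $(S,T)$ of $\Delta[I](z)$ with $S\supseteq B_1$ and $T\supseteq B_2$, that is, $2^{\abs{A}}$ terms (one for each way of splitting $A$ between $S$ and $T$), whereas $(1\cdot\Delta)\circ\lambda_\x$ produces a single term. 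Already for $\x_0=\e_0$ and $I=\{1\}$ one side is $e_\emptyset\otimes(e_\emptyset\otimes e_1)+e_\emptyset\otimes(e_1\otimes e_\emptyset)+2\,e_{\{1\}}\otimes(e_\emptyset\otimes e_\emptyset)$ and the other has the last coefficient equal to $1$. So with the diagonal structure the map is simply not a comodule morphism, and no bookkeeping in your last paragraph can repair this; the difficulty is not tediousness but a wrong choice of structure (one suggested, admittedly, by the paragraph preceding the proposition in the paper).

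The statement, and your strategy, are correct once $\x\cdot\x$ carries the one-sided structure: left coaction through the left tensor factor only, right coaction through the right factor only (no Hopf structure on $\e$ is needed for this, and it is the structure under which pulling back along $\Delta$ is compatible with the coface maps, hence with the cup product of Theorem~\ref{cupformula}). With that choice the check is componentwise and finite: at $(A,B_1,B_2)$, writing $B=B_1\sqcup B_2=A^c$, one side is $e_A\otimes\bigl(z\bs(A\cup B_1)\bigr)\fs B_1\otimes z\fs B_2$ and the other is $e_A\otimes(z\fs B)\bs B_1\otimes(z\fs B)\fs B_2$; the third factors agree by coassociativity of $\lambda$, and the second factors agree by the bicomodule compatibility $\rho^{S}_{S\cap T}\lambda^{I}_{S}=\lambda^{T}_{S\cap T}\rho^{I}_{T}$ applied to the basis element $z$ with $S=B$ and $T=A\cup B_1$ (so $S\cap T=B_1$). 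The right-coaction check is symmetric, using coassociativity of $\rho$ and the same compatibility. Incorporating this correction, the rest of your argument stands.
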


In what follows, we will usually consider every $\e$-bicomodule whose underlying species is a linearization of a coalgebra in the way described in this proposition.
In particular, in the notation we used in Equation
(1) for coproducts of coalgebras and the notation
we used for bicomodules in Equation (2), the
term $z \bs S\otimes z\fs T$ corresponds to $z_{(S)}\otimes z^{(T)}$. 

 \section{An alternative description of cohomology}\label{ch:theCC}
The objective of this chapter is to obtain an alternative and more useful 
description of the cohomology groups of an $\e$-bicomodule $\x$. We show that
$\e$ is a Koszul coalgebra, compute its Koszul dual algebra and show that 
if $\x$ is \emph{weakly projective}, that is, if for each non-negative integer $j$, the component $\x[j]$ 
is a projective $\kk S_j$-module, there is a Koszul complex that calculates 
$H^*(\x)$, and which can be used for effective computations. 

\subsection{Koszul duality for coalgebras in species.}

Let us now consider the weight grading in $\e$ where we
put $\e(n)$ in weight $n$. Then the algebra $\Omega^*(\e)$
admits a homological grading where $(F_1,\ldots,F_q)$ is
in cohomological degree $q-\sum_{i=1}^q |F_i|$. With this
grading at hand, the differential in $\Omega^*(\e)$ is of
degree $-1$. In particular, $H^0(\Omega^*(\e))$ is a quotient
of $\Omega^*(\e)$. 

\begin{deff}
The \emph{Koszul dual algebra} $\e^!$ is the zeroth homology group
$H^0(\Omega^*(\e))$.  
\end{deff}

For $j\geqslant 1$ and for each integer 
$p\geqslant -1$, let $\Sigma_p(j)$ be 
the collection of compositions of length 
$p+2$ of~$[j]$. We will identify the 
elements of $\Sigma_{j-2}(j)$ with permutations 
of $[j]$ in the obvious way. 
There are face maps $\partial_i : 
\Sigma_p(j) \longrightarrow \Sigma_{p-1}(j)$ 
for $i\in \{0,\ldots,p\}$ given by
\[ \partial_i (F_0,\ldots,F_i, F_{i+1},\ldots,F_{p+1})=
 (F_0,\ldots,F_i\cup F_{i+1},\ldots,F_{p+1})\]
that make the sequence of sets $\Sigma_*(j) 
= (\Sigma_p(j))_{p\geqslant -1}$ 
into an augmented semisimplicial set. 
We write $\kk\Sigma_*(j)$ for the augmented 
semisimplicial $k$-module obtained by 
linearizing $\Sigma_*(j)$, and $\kk\Sigma_*(j)'$ 
for the dual semicosimplicial augmented $\kk$-module.

There is an action of $S_j$ on each $\Sigma_p(j)$ by permutation, so that if 
$\tau \in S_j$ and if $(F_0,\ldots,F_t)$ is a composition of $[j]$, then
\[ \tau(F_0,\ldots,F_t) = (\tau(F_0),\ldots,\tau(F_t)).\]
It is straightforward to check the coface maps are equivariant with respect to 
this action, so $\Sigma_*(j)$ is, in fact, an augmented semisimplicial 
$S_j$-set. Consequently, $\kk\Sigma_*(j)$ and $\kk\Sigma_*(j)'$ have corresponding 
$S_j$-actions compatible with their semi(co)simplicial structures.
This complex $\Sigma_*(j)$ is known in the literature as the \emph{Coxeter complex for 
the braid arrangement}, and its cohomology can be completely described. 
 We refer the reader to \cite{AMCox} and 
\cite{Brown} for details. 

\begin{prop}\label{coxeter} The complex associated to $\kk\Sigma_*(j)'$ computes 
the reduced cohomology of a $(j-2)$-sphere with coefficients in $\kk$, that is, 
\[
H^p(\kk\Sigma(j)')=
	\begin{cases} 0 & \text{ if $p\neq j-2$} \\
	\kk \llbracket \xi_j \rrbracket & \text{ if $p=j-2$ } \end{cases}\]
The non-trivial term is the 
$\kk$-module freely generated by the class of the map 
$ \xi_j : 
\kk\Sigma_*(j)\longrightarrow k$  such that $\xi_j(\sigma) = \delta_{\sigma=1}$
and the action of $\kk S_j$ on $H^{j-2}(\kk\Sigma_*(j)')$ is the sign representation. \qed
\end{prop}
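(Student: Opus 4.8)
The plan is to recognise $\Sigma_*(j)$ as the augmented simplicial chain complex of the Coxeter complex of the braid arrangement of type $A_{j-1}$, to import the classical description of its topology for the vanishing part of the statement, and otherwise to proceed by hand: a dualisation via universal coefficients, followed by an explicit identification of the generator and of the $S_j$-action in top degree.

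First I would record the dictionary. A composition $(F_0,\dots,F_{p+1})$ of $[j]$ of length $p+2$ is a $p$-dimensional face of the Coxeter complex, the augmenting composition $([j])$ serving as the empty face, and the face map $\partial_i$ --- which merges the consecutive blocks $F_i$ and $F_{i+1}$ --- is exactly the passage to the corresponding codimension-one face; thus $\kk\Sigma_*(j)$ is the augmented (reduced) simplicial chain complex of that complex. Since the Coxeter complex of a finite reflection group of rank $r$ is a triangulation of the sphere $S^{r-1}$ --- here $r=j-1$, see \cite{AMCox} and \cite{Brown} --- the reduced homology of $\kk\Sigma_*(j)$ is, for every commutative ring $\kk$, free and concentrated in degree $j-2$, where it is $\kk$. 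Because $\kk\Sigma_*(j)$ is a bounded complex of finitely generated free $\kk$-modules with free homology, $\operatorname{Hom}_\kk(\kk\Sigma_*(j),\kk)$ has cohomology $H^p(\kk\Sigma_*(j)')\cong\operatorname{Hom}_\kk\bigl(H_p(\kk\Sigma_*(j)),\kk\bigr)$ in each degree (no $\operatorname{Ext}$ term survives), so it vanishes for $p\ne j-2$ and is free of rank one for $p=j-2$. This is already the displayed formula; what is left is to pin down the generator and the action.

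For the top degree I would argue directly, which also produces the generator. Since $j-2$ is the top degree of $\kk\Sigma_*(j)$, its degree-$(j-2)$ homology coincides with the module of $(j-2)$-cycles. Put $z=\sum_{\sigma\in S_j}\sgn(\sigma)\,\sigma\in\kk\Sigma_{j-2}(j)$, where a permutation is viewed as a length-$j$ composition of $[j]$ as in the text. Grouping the terms of $\partial z$ by their target composition $G$ of length $j-1$: exactly two permutations map to $G$, namely the two orderings of the unique two-element block of $G$, and they do so through one and the same face map, so their $\sgn$'s are opposite and the $G$-term cancels; hence $\partial z=0$. The same accounting shows conversely that any cycle $\sum a_\sigma\sigma$ must satisfy $a_\sigma=-a_{\sigma'}$ whenever $\sigma$ and $\sigma'$ differ by an adjacent transposition of one-line notations; as these transpositions generate $S_j$ and the relations are consistent (they hold for $z$), every cycle is a $\kk$-multiple of $z$. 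Therefore $H_{j-2}(\kk\Sigma_*(j))=\kk z$, free of rank one since the coefficient of the identity in $z$ is $1$. Under the evaluation isomorphism $H^{j-2}(\kk\Sigma_*(j)')\xrightarrow{\ \sim\ }\operatorname{Hom}_\kk\bigl(H_{j-2}(\kk\Sigma_*(j)),\kk\bigr)$ the class $\llbracket\xi_j\rrbracket$ maps to $z\mapsto\xi_j(z)=\sgn(1)=1$, so $\llbracket\xi_j\rrbracket$ generates $H^{j-2}(\kk\Sigma_*(j)')$. Finally, the dual action satisfies $(\tau\cdot\xi_j)(\sigma)=\delta_{\sigma=\tau}$ for $\tau\in S_j$, hence $(\tau\cdot\xi_j)(z)=\sgn(\tau)$; comparing with $\sgn(\tau)\,\xi_j(z)=\sgn(\tau)$ and using that $H^{j-2}(\kk\Sigma_*(j)')$ is free of rank one on $\llbracket\xi_j\rrbracket$ forces $\tau\cdot\llbracket\xi_j\rrbracket=\sgn(\tau)\,\llbracket\xi_j\rrbracket$, i.e.\ the action is the sign representation.

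I expect the genuinely delicate part to be the sign-tracking of the third step: the coefficient $\sgn(\sigma)$ in $z$, the cancellation yielding $\partial z=0$, and the final scalar $\sgn(\tau)$ all depend on fixing mutually compatible conventions for the ordering of the blocks of a composition, for the bijection between length-$j$ compositions of $[j]$ and permutations, and for the $S_j$-action on $\kk\Sigma_*(j)$; once those are fixed, each step is a short computation. The one external ingredient is the topology of the Coxeter complex; a self-contained treatment would instead have to prove directly that $\kk\Sigma_*(j)$ is acyclic below degree $j-2$ --- for instance by exhibiting a shelling of the Coxeter complex --- but the plan above confines the real difficulty to exactly that point.
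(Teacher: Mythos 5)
Your proposal is correct and takes essentially the same route as the paper, which gives no argument of its own beyond identifying $\Sigma_*(j)$ as the Coxeter complex of the braid arrangement and citing \cite{AMCox} and \cite{Brown} for its sphere topology and for the sign representation on top cohomology. Your only departure is to verify by hand, rather than by citation, that the top homology is freely generated by $z=\sum_{\sigma\in S_j}\sgn(\sigma)\,\sigma$, that the class of $\xi_j$ generates the dual, and that the action is the sign representation; this is sound (and, as the paper's subsequent remark notes, these facts also follow with no computation from the Koszulness of $\e$).
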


\begin{remark}
In what follows, $\sgn_j$ will denote the sign representation of $\kk S_j$ just 
described. Note that, when $j=1$, $S^{j-2}=\varnothing$, and the reduced 
cohomology of such space is concentrated in degree $-1$, where it has value $\kk$. 
\end{remark}

We now observe that $\Omega^*(\e)$ is equivariantly isomorphic to 
the normalization of $\Sigma_{*-2}$, which immediately implies
the following result:

\begin{theorem}
The projection $\Omega(\e)\longrightarrow \e^!$ 
is a quasi-isomorphism of algebras.\qed
\end{theorem}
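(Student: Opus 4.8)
The statement asserts that the projection $\Omega(\e)\longrightarrow \e^!$ is a quasi-isomorphism of algebras, where $\e^! = H^0(\Omega^*(\e))$ with respect to the homological grading in which a basis element $(F_1,\ldots,F_q)$ sits in degree $q - \sum_{i=1}^q |F_i|$. Since $\e^!$ is by definition the zeroth homology, the claim is that $\Omega^*(\e)$ has no homology in any other (homological) degree; the multiplicativity of the quasi-isomorphism is then automatic, since the projection onto $H^0$ of a dg algebra concentrated in non-negative degrees is always an algebra map. So the real content is an acyclicity statement, and the plan is to reduce it, weight-component by weight-component, to the cohomology computation of the Coxeter complex already recorded in Proposition~\ref{coxeter}.

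First I would fix a weight $n$ and examine the weight-$n$ part of $\Omega^*(\e)$. A basis element in weight $n$ is a decomposition $F = (F_1,\ldots,F_q)$ of a set $I$ with $\#I = n$; after transporting along the evident equivariance, it suffices to work with $I = [n]$. The homological degree of such an element is $q - n$, so for fixed $n$ the weight-$n$ strand of $\Omega^*(\e)$ is supported in homological degrees $\geqslant -n$ (the minimum, $-n+1$ actually, being attained when $q=1$; one can also start the count so that permutations of $[n]$ land in degree $0$). The differential — the unique coderivation extending the reduced coproduct $\Delta : s^{-1}\overline{\e}\to(s^{-1}\overline{\e})^{\cdot 2}$ — splits a single block $F_i$ into two in all possible ordered ways, which is, up to the indexing shift and a sign, exactly the face map $\partial_i$ of the augmented semisimplicial set $\Sigma_*([n])$ run backwards. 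This is the "equivariant isomorphism with the normalization of $\Sigma_{*-2}$" alluded to just before the theorem; I would make that identification precise, being careful that the cobar differential uses the \emph{reduced} coproduct, which is precisely why empty blocks are excluded and why one lands in the normalized (rather than unnormalized) chain complex of $\Sigma_*$, and that the degree shift by $2$ matches the convention $\Sigma_p(j)$ = compositions of length $p+2$.

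With that identification in hand, the cohomology of the weight-$n$ part of $\Omega^*(\e)$ is the homology of (the normalization of) $\kk\Sigma_*([n])$, which by Proposition~\ref{coxeter} — or rather its homological, undualized counterpart — is concentrated in a single degree: it is the reduced homology of an $(n-2)$-sphere, hence $\kk$ placed in the degree corresponding to $p = n-2$, i.e. homological degree $0$ after the shift, carrying the sign action of $S_n$. Summing over all weights $n\geqslant 0$ (the weight-$0$ and weight-$1$ cases being the degenerate spheres $S^{-2}=\varnothing$ and $S^{-1}=\varnothing$, handled by the Remark after Proposition~\ref{coxeter}), this shows $H^k(\Omega^*(\e)) = 0$ for $k\neq 0$, which is exactly acyclicity away from degree $0$, hence the projection to $H^0 = \e^!$ is a quasi-isomorphism. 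I would close by noting that it is a map of dg algebras because the projection $Z^0 \twoheadrightarrow H^0$ of cocycles onto cohomology in degree $0$ is a ring map and $\Omega^*(\e)$ lives in degrees $\geqslant 0$ so every element of $\Omega^0$ is a cocycle.

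**Main obstacle.** The only genuinely delicate point is bookkeeping: pinning down the dictionary between the cobar differential on $\Omega^*(\e)$ and the semisimplicial face maps $\partial_i$ on $\Sigma_*$ so that signs, the degree shift by $2$, the role of the \emph{reduced} versus full coproduct (and correspondingly the normalized versus full complex of $\Sigma_*$), and the $S_n$-equivariance all line up. Once the isomorphism $\Omega^*(\e)_{(n)} \cong N\kk\Sigma_{*-2}([n])$ is correctly set up, the result is an immediate transcription of Proposition~\ref{coxeter}; I expect no difficulty beyond that identification, which is why the theorem is (appropriately) stated with a \qed and no displayed proof.
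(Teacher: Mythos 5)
Your proposal is correct and follows the same route as the paper: the paper's entire argument is the observation, stated just before the theorem, that $\Omega^*(\e)$ is $S_n$-equivariantly isomorphic (weight by weight) to the normalization of $\Sigma_{*-2}$, so that Proposition~\ref{coxeter} gives homology concentrated in the single degree where $\e^!=H^0(\Omega^*(\e))$ sits, and multiplicativity of the projection is automatic since the grading is concentrated on one side of zero. The only cosmetic slips are orientation conventions (the cobar differential, splitting blocks, matches the \emph{dual} complex $\kk\Sigma_*(j)'$ of Proposition~\ref{coxeter} directly, and the grading $q-\sum|F_i|$ is nonpositive rather than nonnegative), which do not affect the argument.
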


This theorem also allows us to identify $\e^!$ as an algebra.
The signed exponential species~\cite{Aguiar}*{Section 9.3.1} is given by $\e^-[I] = \bigwedge^{|I|} \kk^I$. 
As such, it is one dimensional, and the action of
$S_I$ on it is the sign representation. We define
the a desuspension functor on symmetric sequences
by $(\Sigma^{-1}\x)[I] = s^{-|I|}\x[I]$. It is clear how to extend
this to a functor on (co)algebras. 

\begin{cor}
 The algebra $\e^!$ is isomorphic to the desuspension
 $\Sigma^{-1}\e^-$.\qed
 \end{cor}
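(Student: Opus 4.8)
The plan is to combine the previous theorem with the computation of the cohomology of the Coxeter complex in Proposition~\ref{coxeter}. By the preceding theorem, the projection $\Omega(\e) \longrightarrow \e^!$ is a quasi-isomorphism, so $\e^!$ is the cohomology of $\Omega^*(\e)$; and since $\Omega^*(\e)$ was identified equivariantly with the normalization of $\Sigma_{*-2}$, we may read off $\e^![j]$ as the cohomology of $\kk\Sigma_*(j)'$ in the appropriate degree. First I would fix $j$ and trace through the cohomological grading: on the weight-$j$ piece, a block sequence $(F_0,\dots,F_p)$ sits in cohomological degree $p+1-j$, so the surviving cohomology, which by Proposition~\ref{coxeter} lives in semisimplicial degree $p = j-2$, lands in cohomological degree $(j-2)+1-j = -1$ once the shift $s^{-|I|}$ is accounted for; more simply, after desuspending by $s^{-|I|}$ the class $\xi_j$ sits in degree zero, which is exactly where $\e^!$ is concentrated. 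Thus $\e^![j]$ is one-dimensional, generated by $s^{-j}\class{\xi_j}$.

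Next I would identify the $S_j$-module structure. Proposition~\ref{coxeter} states that the action of $\kk S_j$ on $H^{j-2}(\kk\Sigma_*(j)')$ is the sign representation $\sgn_j$, and the desuspension $s^{-|I|}$ carries no extra sign since it is the same shift applied uniformly (or, if one prefers to track Koszul signs, the sign it introduces is exactly absorbed into the identification $\e^-[I] = \bigwedge^{|I|}\kk^I$, whose $S_I$-action is by definition the sign). Hence as an $S_j$-representation $\e^![j] \cong \sgn_j = \e^-[j]$, placed in internal degree $-j$, which is precisely $(\Sigma^{-1}\e^-)[j]$. This handles the underlying symmetric sequence; one also checks the edge case $j=0$, where $\Omega^*(\e)$ in weight zero is just $\kk$ in degree zero, matching $(\Sigma^{-1}\e^-)[\varnothing] = \kk$, and $j=1$, where $S^{-1} = \varnothing$ and the reduced cohomology is $\kk$ in degree $-1$, again matching.

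Finally I would check that this isomorphism is multiplicative, i.e.\ an isomorphism of algebras and not merely of symmetric sequences. The product on $\e^! = H^0(\Omega^*(\e))$ is induced by the free product (concatenation of block sequences) in the cobar construction, and on the generators $s^{-j}\class{\xi_j}$ this sends $\class{\xi_j} \otimes \class{\xi_k}$ to the class of the concatenation in $\Sigma_*(j+k)$; under the identification with $\bigwedge^\bullet$, concatenation of a permutation of $[j]$ with a permutation of $[k]$ corresponds to the wedge product $\bigwedge^j \kk^S \otimes \bigwedge^k \kk^T \to \bigwedge^{j+k}\kk^{S\sqcup T}$, which is exactly the product of the signed exponential species $\e^-$ (twisted by the desuspension, giving $\Sigma^{-1}\e^-$). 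The main obstacle I anticipate is precisely the sign bookkeeping in this last step: verifying that the Koszul sign from the desuspension $\Sigma^{-1}$, the sign coming from reordering blocks when comparing concatenation in $\Sigma_*$ with the wedge product, and the sign built into $\sgn_j$ all cancel consistently, so that the map is an honest algebra morphism rather than one twisted by some spurious sign. Everything else is a direct transcription of Proposition~\ref{coxeter} and the preceding theorem.
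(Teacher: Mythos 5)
Your proposal is correct and takes essentially the same route as the paper, which presents the corollary as an immediate consequence of the quasi-isomorphism $\Omega(\e)\longrightarrow\e^!$ together with Proposition~\ref{coxeter} (one-dimensional top cohomology carrying the sign representation), with the multiplicative identification of concatenation with the wedge product of $\e^-$ being the same implicit final step. Only a minor bookkeeping slip: in semisimplicial degree $p$ the compositions of $[j]$ have $p+2$ blocks, so at $p=j-2$ the surviving class already sits in cohomological degree $j-j=0$ directly, without needing the desuspension shift to repair a $-1$.
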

 
 One can in fact show that $\e$ is Koszul without resorting
 to the above geometrical considerations. Indeed, $\e$ is
 the cofree conilpotent cocommutative coalgebra over the unit
 species (that is, a single generator in degree zero)
 and, as such, it is immediately Koszul. Moreover, its
 Koszul dual is the commutative algebra over
 the suspension of the unit species (that is, a single
 generator in degree minus one). This is exactly the species
 $\Sigma^{-1}\e^-$ above. Below, it will be important to note
 that $\e^-$ carries signs in its product: for example, if
 $e_1$ and $e_2$ are generators corresponding to the singletons
 $\{1\}$ and $\{2\}$, then $e_{\{1,2\}}=e_1e_2 = - e_2e_1$.
  
 \begin{remark} The previous paragraph
  shows us how the purely algebraic theory of
Koszul duality can shed light into combinatorics: 
the observation above implies immediately
that the Coxeter complex has the homology of a sphere,
for example, and that the representation of this
top homology group is the sign representation without
doing \emph{any} computation at all.
\end{remark}

All the work done so far shows that the complex 
$\hom(\x,\Omega(\e))$ computes the cohomology of~$\x$. 
If we further assume that~$\x$ is weakly projective, then
the functor $\hom(\x,-)$ is exact so that it preserves
quasi-isomorphisms. We deduce that:

\begin{theorem}\label{thm:mainresult}
If $\x$ is weakly projective, the Koszul complex
$K^*(\x) = \hom(\x,\e^!)$ computes the cohomology
groups $H^*(\x)$. Explicitly, we have that for each
$p\in\NN$,
\[ K^p(\x) = \hom_{S_p}(\x[p],\sgn_p) \]
and the differential is given by
\[ d(\varphi) = [\tau,\varphi] = \tau \frown \varphi  -(-1)^{|\varphi|} \varphi \frown \tau \]
where $\tau : \e \longrightarrow \e^!$ is the composition
$\e \longrightarrow \Omega(\e) \longrightarrow \e^!$
and $\tau\frown \varphi$ is given by the composition
\[ \x \stackrel{\rho}\longrightarrow \e \cdot \x
	\xrightarrow{\tau\otimes 1}
		\e^!\cdot \x \xrightarrow{1\otimes \varphi}
	 	\e^!\cdot \e^! \stackrel{\mu}\longrightarrow \e^!,\]
	 	and similarly for $\varphi\frown\tau$. 
\end{theorem}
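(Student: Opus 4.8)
The plan is to combine the two ingredients that have been assembled: first, that $\Omega(\e)\to\e^!$ is a quasi-isomorphism of algebras (the theorem quoted above), and second, that when $\x$ is weakly projective the functor $\hom_{\Sp_\kk}(\x,-)$ is exact. From the earlier discussion we already know that $\CX^*(\x,\e)$ is computed by the twisted complex $\hom_\tau(\x,\Omega(\e))$ with differential $\delta_1-\delta_2$. Since $\x[j]$ is projective over $\kk S_j$ for every $j$, applying $\hom_{\Sp_\kk}(\x,-)$ to the quasi-isomorphism $\Omega(\e)\to\e^!$ yields a quasi-isomorphism of the \emph{underlying} graded complexes. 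The first task is therefore to check that this quasi-isomorphism is compatible with the twists by $\tau$ on both sides — that is, that the twisting cochain $\tau:\e\to\Omega(\e)$ pushes forward along the projection to the twisting cochain $\e\to\e^!$ that the statement calls $\tau$, so that the induced map of twisted complexes is again a quasi-isomorphism. This is where the bulk of the verification lies.

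Concretely, I would proceed in the following steps. (1) Identify the graded module $K^p(\x)=\hom_{\Sp_\kk}(\x,\e^!)$ in degree $p$: since $\e^!\cong\Sigma^{-1}\e^-$ is concentrated with $(\e^!)[p]=s^{-p}\sgn_p$, a map of species $\x\to\e^!$ landing in degree $p$ is exactly an $S_p$-equivariant map $\x[p]\to\sgn_p$, giving $K^p(\x)=\hom_{S_p}(\x[p],\sgn_p)$ as claimed. (2) Transport the twisted differential $\delta_1-\delta_2$ of $\hom_\tau(\x,\Omega(\e))$ across the quasi-isomorphism. The internal summand $\delta_1$, coming from post-composition with $d_{\Omega(\e)}$, becomes post-composition with the differential of $\e^!$, which is zero; so only the twist survives on the Koszul side. (3) Unwind what the twist $\delta_2$ becomes: by definition $\delta_2(\varphi)=\mu_{\Omega(\e)}(\varphi\otimes\tau\circ\lambda+(-1)^{|\varphi|}\tau\otimes\varphi\circ\rho)$, and under $\Omega(\e)\to\e^!$ the image of the degree $-1$ injection $\e\hookrightarrow\Omega(\e)$ is precisely the map called $\tau:\e\to\e^!$ in the statement; pushing $\mu_{\Omega(\e)}$ forward to $\mu_{\e^!}$ gives exactly the two composites $\tau\frown\varphi$ and $\varphi\frown\tau$ displayed, with the stated sign, so $d(\varphi)=\tau\frown\varphi-(-1)^{|\varphi|}\varphi\frown\tau=[\tau,\varphi]$.

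The main obstacle, as indicated, is step (2)–(3): one must make sure the identification $\hom_\tau(\x,\Omega(\e))\simeq\hom_\tau(\x,\e^!)$ respects the \emph{full} differential and not merely the underlying complex. The cleanest way to handle this is to observe that a twisting cochain and its associated twisted tensor (or twisted hom) construction are functorial along morphisms of dg algebras that are compatible with the twisting cochains, and that quasi-isomorphisms of dg algebras induce quasi-isomorphisms of the twisted complexes whenever the relevant filtration (here, the weight filtration on $\e$, which is bounded in each arity) is exhaustive and bounded below — a spectral-sequence comparison argument. Since $\x$ is weakly projective the $E_1$-pages are computed by $\hom_{\Sp_\kk}(\x,-)$ applied to the homology of $\Omega(\e)$, which is $\e^!$ concentrated in degree $0$, and the comparison map is an isomorphism on $E_1$, hence on the abutment. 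Everything else — the explicit formula for $\tau\frown\varphi$ via $\x\xrightarrow{\rho}\e\cdot\x\xrightarrow{\tau\otimes1}\e^!\cdot\x\xrightarrow{1\otimes\varphi}\e^!\cdot\e^!\xrightarrow{\mu}\e^!$, and symmetrically for $\varphi\frown\tau$ using $\lambda$ — then follows by directly expanding $\mu_{\Omega(\e)}$ and reducing modulo the differential, which is routine.
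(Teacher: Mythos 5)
Your proposal follows essentially the same route as the paper: the paper deduces the theorem directly from the quasi-isomorphism $\Omega(\e)\to\e^!$ together with the exactness of $\hom_{\Sp_\kk}(\x,-)$ granted by weak projectivity, identifying $K^p(\x)=\hom_{S_p}(\x[p],\sgn_p)$ via $\e^!\cong\Sigma^{-1}\e^-$ and reading off the commutator differential $[\tau,-]$ from the twist (the explicit formula being handled separately in Theorem~\ref{diffCC}). Your steps (2)--(3), making the compatibility of the twisting cochains precise by a filtration/spectral-sequence comparison, only spell out a point the paper leaves implicit, so the argument is correct and matches the paper's proof.
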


Moreover, we have 
the following computational result:

\begin{theorem}\label{diffCC}
The differential of the Koszul complex $K^*(\x)$ is such that if 
$ \varphi : \x[p] \longrightarrow \sgn_p$ is $S_p$-equivariant, then 
$df: \x[p+1]\longrightarrow \sgn_{p+1}$ is the $S_{p+1}$-equivariant
map so that for every $z\in \x[p]$,
\[ 
df(z) = \sum_{i=1}^{p+1} 
	(-1)^{i-1} \left( f(z\bs (I\smallsetminus i))-f(z\fs (I\smallsetminus i))\right).
\]
It follows that if $\x$ is a linearization $\kk\x_0$, the value of 
$df(z)$ for $f\in \SC^p(\x)$ and an element $z\in  \x_0(p+1)$ 
depends only on data that is degree-wise finite if $\x$ is of finite type.  
\end{theorem}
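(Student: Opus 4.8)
The plan is to compute the differential $d(\varphi)=[\tau,\varphi]$ of Theorem~\ref{thm:mainresult} explicitly by unwinding the two cap products $\tau\frown\varphi$ and $\varphi\frown\tau$ in the component form that is forced on us by the identification $K^p(\x)=\hom_{S_p}(\x[p],\sgn_p)$. First I would recall that under the Corollary, $\e^!=\Sigma^{-1}\e^-$, so $\e^![I]$ is one-dimensional, concentrated in cohomological degree $|I|-(\text{something})$, and the product $\e^!\cdot\e^!\to\e^!$ is, up to the desuspension signs, the product $e_S\cdot e_T=\pm\,e_{S\sqcup T}$ of the signed exponential species; the key sign bookkeeping is exactly the rule $e_{\{1,2\}}=e_1e_2=-e_2e_1$ recorded in the excerpt. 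The map $\tau:\e\to\e^!$ is nonzero only on the weight-one part: it sends each singleton generator $e_{\{i\}}$ to the corresponding generator of $\e^![\{i\}]$ and kills $\e[I]$ for $|I|\ne 1$. This is the crucial input, because it collapses both $\rho:\x\to\e\cdot\x$ and $\lambda:\x\to\x\cdot\e$ down to their one-element-block summands once we postcompose with $\tau\otimes 1$ or $1\otimes\tau$.

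Next I would chase an element $z\in\x[p]$ (really $z\in\x_0[p+1]$ after one realizes the source of $df$ is $\x[p+1]$, so the chase starts from $z'\in\x[p+1]$ and I will rename accordingly). For $\tau\frown\varphi$: apply $\rho$, which by the bicomodule notation writes $\rho[I](z')=\sum_{(S,T)\vdash I} e_S\otimes z'\bs T$ in the relevant component; apply $\tau\otimes 1$, which annihilates every summand except those with $|S|=1$, i.e. $S=\{i\}$ for $i\in I=[p+1]$; then apply $1\otimes\varphi$ to get $\varphi(z'\bs([p+1]\smallsetminus i))$ — a scalar, recording the value of $\varphi$ on the right-restriction of $z'$ that forgets $i$ — tensored with the generator of $\e^![\{i\}]$; finally multiply in $\e^!$, which reassembles the generator of $\e^![\{i\}]$ with the generator coming from $\varphi$'s target $\sgn_{[p+1]\smallsetminus i}$ into a generator of $\sgn_{[p+1]}$, picking up a sign $(-1)^{i-1}$ from the shuffle past the block $\{i\}$ to its canonical position. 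Symmetrically, $\varphi\frown\tau$ uses $\lambda$ and produces $\sum_i(-1)^{i-1}\varphi(z'\fs([p+1]\smallsetminus i))$ with the left-restriction; the overall sign $-(-1)^{|\varphi|}$ in front of $\varphi\frown\tau$ combines with the internal signs to give the stated difference $f(z\bs\cdot)-f(z\fs\cdot)$ inside the sum. Assembling, $df(z')=\sum_{i=1}^{p+1}(-1)^{i-1}\bigl(f(z'\bs([p+1]\smallsetminus i))-f(z'\fs([p+1]\smallsetminus i))\bigr)$, which is the claimed formula.

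The main obstacle I expect is getting every sign right: there are three independent sources of signs — the Koszul desuspension $\Sigma^{-1}$ (each block of cardinality one contributes a shift of degree $-1$), the Koszul sign in the graded product $\mu:\e^!\cdot\e^!\to\e^!$, and the Koszul sign in the graded commutator $[\tau,\varphi]$ coming from $|\tau|=-1$ and $|\varphi|=p$. One has to check these conspire to leave exactly $(-1)^{i-1}$ attached to the $i$-th term and exactly a minus sign between the two restriction operators, rather than, say, $(-1)^i$ or a plus sign. The cleanest way to pin this down is to first verify the formula by hand in the smallest nontrivial cases ($p=0$ and $p=1$, where $\x[1]$ and $\x[2]$ are small and the restrictions are explicit) and then confirm that the general sign pattern is the one dictated by the semisimplicial face maps $\partial_i$ of the Coxeter complex $\Sigma_*$ under the equivariant isomorphism $\Omega(\e)\cong$ the normalization of $\Sigma_{*-2}$ noted just before Theorem~\ref{thm:mainresult}; that isomorphism transports the alternating-sum differential, so the $(-1)^{i-1}$ is inherited rather than recomputed.

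For the final sentence of the statement, once the formula is established the finiteness claim is immediate and I would dispatch it in one line: when $\x=\kk\x_0$ is a linearization, $\x_0[p+1]$ is a \emph{set} and each restriction $z'\bs([p+1]\smallsetminus i)$, $z'\fs([p+1]\smallsetminus i)$ is a single element of $\x_0[p]$ (or of $\x_0[p+1\smallsetminus i]$, canonically identified), so the right-hand side is a finite $\kk$-linear combination of the values of $f$ on finitely many elements of $\x_0[p]$; hence if $\x$ is of finite type, $df(z)$ is determined by finitely much data in each degree, as asserted. No further argument is needed here beyond pointing at the explicit formula just derived.
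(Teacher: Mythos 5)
Your proposal is correct and follows essentially the same route as the paper: both unwind $d(\varphi)=[\tau,\varphi]$ by observing that postcomposing the coactions with $\tau$ retains only the singleton-block summands, and then track the desuspension and Koszul signs through the product of $\e^!=\Sigma^{-1}\e^-$ to obtain the alternating signs and the relative minus sign between the two restriction terms. You also rightly note (and repair) the statement's slip that the chased element lives in $\x[p+1]$, and your finiteness remark matches the paper's treatment.
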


\begin{proof}
Let us write down $\tau\frown\varphi$ explicitly. Given some
$z\in\x[I]$, the definition of $\tau$ shows that
the composition $(\tau\otimes 1)\rho$ is obtained by
keeping in $\rho(z)$ only those summands where the left hand side
is supported in cardinality one. This gives us the sum
\[ \sum_{i\in I} s^ie_i\otimes z\bs (I\smallsetminus i) .\]
Applying $\varphi$ and then the product of $\e^!$
creates the sign
of the statement in the theorem. Similarly, the term $\varphi\frown\tau$
gives a sum
\[ \sum_{i\in I} z\fs (I\smallsetminus i) \otimes s^i e_i \]
where now putting $i$ in the correct position goes past $|I|-i+1$ 
generators $e_j$. Since the
degree of $\varphi$ is $|I|-1$, the previous sign
count and the Koszul sign rule give us the second term of the
sum with its sign.
\end{proof}

\subsection{Multiplicative matters.}
\label{sect:kunneth}

We now describe how to exploit the Koszul complex to deduce a K\"unneth theorem for the Cauchy product. 
\begin{prop}
For each such $p,q\in\mathbb N$ there is an isomorphism
\[\phi^p :  \hom_{S_p\times S_q}(\x\inter{p}\otimes \y\inter{q} ,\sgn_p\otimes \sgn_q) 
 	\longrightarrow \hom_{S_{p+q}}((\x\otimes\y)^p[p+q],\sgn_{p+q})
 		\]
where $(\x\otimes\y)^p\inter{p+q}$ is the space of summands
$\x[S]\otimes \y[T]$ with $S$ of cardinality $p$.
 \end{prop}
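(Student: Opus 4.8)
The plan is to construct the map $\phi^p$ directly as a ``spreading out'' operation, and then exhibit its inverse by ``restricting to a standard block''. Recall that $(\x\otimes\y)[p+q] = \bigoplus_{(S,T)\vdash [p+q]} \x[S]\otimes\y[T]$ carries the diagonal $S_{p+q}$-action, and $(\x\otimes\y)^p[p+q]$ is the subspace of summands indexed by $(S,T)$ with $\#S=p$; since the $S_{p+q}$-action permutes these summands transitively on the index set of such $(S,T)$, the subspace $(\x\otimes\y)^p[p+q]$ is an $S_{p+q}$-submodule, in fact the induced module $\mathrm{Ind}_{S_p\times S_q}^{S_{p+q}}\bigl(\x[p]\otimes\y[q]\bigr)$, where $S_p\times S_q$ acts on the distinguished summand $\x[\{1,\dots,p\}]\otimes\y[\{p+1,\dots,p+q\}]$. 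Dually, $\sgn_{p+q}$ restricted to $S_p\times S_q$ is $\sgn_p\otimes\sgn_q$.

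With that identification in hand, the statement becomes an instance of the Frobenius reciprocity / adjunction isomorphism
\[
\hom_{S_p\times S_q}(M,\,\mathrm{Res}\,N) \;\cong\; \hom_{S_{p+q}}(\mathrm{Ind}\,M,\,N),
\]
applied with $M = \x[p]\otimes\y[q]$, $\mathrm{Res}\,N = \sgn_p\otimes\sgn_q$, and $N = \sgn_{p+q}$; here I use that over a group algebra $\kk S_{p+q}$ induction from $\kk[S_p\times S_q]$ is both left and right adjoint to restriction (the subgroup has finite index), so this adjunction is available without any flatness hypothesis on $\kk$. Concretely, $\phi^p$ sends an equivariant $\varphi:\x[p]\otimes\y[q]\to\sgn_p\otimes\sgn_q\cong\sgn_{p+q}|_{S_p\times S_q}$ to the unique equivariant extension $\widehat\varphi:(\x\otimes\y)^p[p+q]\to\sgn_{p+q}$ that agrees with $\varphi$ on the distinguished summand and is determined on the remaining summands $\x[S]\otimes\y[T]$ by transporting along any $\sigma\in S_{p+q}$ with $\sigma(\{1,\dots,p\})=S$ — well-definedness being exactly the statement that $\varphi$ is $S_p\times S_q$-equivariant. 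The inverse sends $\psi:(\x\otimes\y)^p[p+q]\to\sgn_{p+q}$ to its restriction to the summand $\x[\{1,\dots,p\}]\otimes\y[\{p+1,\dots,p+q\}]$, composed with the canonical identification $\sgn_{p+q}|_{S_p\times S_q}\cong\sgn_p\otimes\sgn_q$; that these two assignments are mutually inverse is immediate once the induced-module description is set up.

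The steps, in order, are: (i) identify the index set $\{(S,T)\vdash[p+q] : \#S=p\}$ with $S_{p+q}/(S_p\times S_q)$ and deduce that $(\x\otimes\y)^p[p+q]\cong\mathrm{Ind}_{S_p\times S_q}^{S_{p+q}}(\x[p]\otimes\y[q])$ as $\kk S_{p+q}$-modules, taking care to track the diagonal action through the direct sum and to check the isomorphism is natural in $\x$ and $\y$; (ii) record that $\sgn_{p+q}$ restricts to $\sgn_p\otimes\sgn_q$, which is clear since a transposition inside either factor has sign $-1$; (iii) invoke the (right) adjunction $\hom_{S_{p+q}}(\mathrm{Ind}\,-,-)\cong\hom_{S_p\times S_q}(-,\mathrm{Res}\,-)$ and unwind it to the explicit $\phi^p$ and its inverse described above. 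I expect the main obstacle — really the only non-formal point — to be step (i): one must choose coset representatives (or block-orderings) consistently and verify that the resulting isomorphism with the induced module intertwines the diagonal $S_{p+q}$-action on the Cauchy-product summands with the standard action on $\mathrm{Ind}$, and in particular that the ``$p$'' superscript (fixing $\#S = p$ rather than summing over all block sizes) is exactly what cuts out a single transitive orbit. Everything after that is the standard Frobenius-reciprocity bookkeeping, valid over an arbitrary commutative ring $\kk$ because $[S_{p+q}:S_p\times S_q]<\infty$.
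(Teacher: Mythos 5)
Your proof is correct, but it takes a genuinely different route from the paper's. The paper proceeds by hand: it defines $\phi^p$ on each summand $\x[S]\otimes\y[T]$ by transporting along the unique monotone bijection $u_{S,T}$ carrying $S$ to $\inter{p}$ and $T$ to $\{p+1,\dots,p+q\}$, inserts the sign $(-1)^{\sch(S,T)}$ (which is the sign of $u_{S,T}$), and then verifies $S_{p+q}$-equivariance by an explicit sign computation, factoring an arbitrary $\tau\in S_{p+q}$ as $\tau=\xi\rho$ with $\rho$ a shuffle of $(S,T)$ and $\xi$ monotone on each block, so that all signs cancel. You instead recognize the structural content: $(\x\otimes\y)^p[p+q]\cong \operatorname{Ind}_{S_p\times S_q}^{S_{p+q}}(\x[p]\otimes\y[q])$ because $S_{p+q}$ permutes the summands transitively with stabilizer $S_p\times S_q$ at the distinguished one, $\sgn_{p+q}$ restricts to $\sgn_p\otimes\sgn_q$, and the hom-isomorphism is then the standard adjunction $\hom_{S_{p+q}}(\operatorname{Ind} M,N)\cong\hom_{S_p\times S_q}(M,\operatorname{Res} N)$ (in fact only the left-adjoint property, i.e.\ tensor--hom adjunction, is needed, so no finite-index or flatness caveat is required; it holds over any commutative $\kk$). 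Your approach buys bijectivity for free and makes the naturality transparent, whereas the paper's computation leaves the inverse implicit; what the paper's approach buys is the explicit closed formula with the Schubert-statistic sign, which is what is actually used afterwards in the K\"unneth map and in the cup-product formula of Theorem~\ref{cupformula}. Note that when you unwind your adjunction using the monotone coset representatives $u_{S,T}$, the sign picked up from equivariance in $\sgn_{p+q}$ is exactly $(-1)^{\sch(S,T)}$, so your $\phi^p$ coincides with the paper's; it would be worth recording that unwinding explicitly if you intend to use the formula later.
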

\begin{proof}
 This is readily described as follows. For each decomposition $(S,T)$ of $n$,
let $u = u_{S,T}$ be the unique bijection that assigns $S$ to $\inter{p}$
and $T$ to $\inter{p+1,p+q}$ in a monotone fashion, and given an element
$f\in\hom_{p,q}(\x\inter{p}\otimes \y\inter{q} ,\sgn_p\otimes \sgn_q)$, set
\[ \phi^p(f)(z\otimes w) = (-1)^{\sch(S,T)} f(z'\otimes w') \]
where $uz=z',uw=w'$ and $z\otimes w\in \x[S]\otimes \y(T)$.
We claim this is $S_{p+q}$-equivariant. Note that the sign of $u$ is $\sch(S,T)$. Indeed, if $\tau$
is a permutation of $n$ and $(S,T)$ is a decomposition
of $n$, we can write $\tau = \xi \rho$ where $\rho = \tau_1\times \tau_2$
is a shuffle of $(S,T)$ and $\xi$ is monotone over $S$ and over $T$.
It is clear that if $(S',T')$ is the image of $(S,T)$ under $\tau$
and if $u' = u'_{S',T'}$ then $u = u'\xi$. Moreover, note that
$u(\tau_1z\otimes \tau_2 w)$ is transported to $u(z\otimes w)$ by 
$u\rho^{-1}u^{-1}$, which belongs to $S_p\times S_q$, and we now 
compute 
\begin{align}
(-1)^\tau \phi^p(f)(\tau(z\otimes w))
						&= (-1)^{\tau+ u'} f(u'\tau(z\otimes w)) \\
						&= (-1)^{\tau+ u'} f(u (\tau_1 z \otimes \tau_2 w))\\
						&= (-1)^{\tau+ u'+\rho} f(u ( z \otimes  w)) \\
						&= (-1)^{\tau+ u'+\rho+u} \phi^p(f)(z \otimes  w) \\
						&= \phi^p(f)(z \otimes  w)
\end{align} 
where the signs cancel by virtue of the identities $\xi\rho = \tau$ and $u'\xi = u$.
\end{proof}

For each $p,q\in\mathbb N$ there are canonical maps 
\[ \label{eq:splithom}
 			\hom_{S_p}(\x\inter{p},\sgn_p)\otimes \hom_{S_q}(\y\inter{q},\sgn_q)
 					\longrightarrow 
			 \hom_{S_{p,q}}(\x\inter{p}\otimes \y\inter{q},\sgn_p\otimes \sgn_q) 
 	\]
that are all isomorphisms if $k$ is a field and $\x$ or $\y$ is finite in
each arity, and they collect along with the maps $\phi$ to
define a map
\[ \label{eq:kunnethCC} 
			-\times - :\SC^*(\x)\otimes \SC^*(\y)
				\longrightarrow 
				\SC^*(\x\cdot \y). \]
Explicitly, given maps $f_p : \x[p]\longrightarrow \sgn_p$ and
$g_q : \y(q) \longrightarrow \sgn_q$, we have for each
decomposition $(S,T)$ and $z\otimes w\in \x[S]\otimes \y(T)$ 
\[ (f_p \times g_q)(z\otimes w) 
	= (-1)^{\sch(S,T)} f_p(u_S(z))\otimes g_q(u_T(w)), \]
where $u = u_{S,T}$. We obtain now the main result
of this section, a K\"unneth formula that allows us
to compute the cohomology groups of a product in terms
of its factors.

\begin{theorem}[K\"unneth formula] \label{thm:Kunneth}
Suppose that $k$ is a field of characteristic zero and $\x$ or $\y$ is locally finite. The map $-\times - : \SC^*(\x)\otimes\SC^*(\y)\to\SC^*(\x\cdot\y)$ is
an isomorphism of complexes.  \qed
\end{theorem}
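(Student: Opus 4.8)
The plan is to show that the map $-\times -$ is a morphism of complexes which is a bijection in each degree, and then conclude it is an isomorphism of complexes. Since the differential of the Koszul complex was computed explicitly in Theorem~\ref{diffCC}, the compatibility of $-\times -$ with differentials reduces to a Leibniz-type identity, so the main content is the bijectivity statement, which is where the hypotheses on $\kk$ and local finiteness enter.

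First I would address bijectivity. Fix $p,q\in\mathbb N$ and recall the two pieces the map is built from. The map $\phi^p$ of the previous proposition is already an isomorphism onto $\hom_{S_{p+q}}((\x\otimes\y)^p[p+q],\sgn_{p+q})$ with no hypotheses; the point is that $u_{S,T}$ is a bijection and so identifies the summand $\x[S]\otimes\y[T]$ with $\x[p]\otimes\y[q]$ equivariantly (after the sign twist). The second piece, the canonical map
\[
\hom_{S_p}(\x[p],\sgn_p)\otimes\hom_{S_q}(\y[q],\sgn_q)\longrightarrow\hom_{S_p\times S_q}(\x[p]\otimes\y[q],\sgn_p\otimes\sgn_q),
\]
is an isomorphism precisely when $\kk$ is a field and one of $\x[p]$, $\y[q]$ is finite-dimensional, since then $\hom$ commutes with the relevant tensor products and with taking $(S_p\times S_q)$-invariants (averaging is available in characteristic zero, and one uses that $\hom_{S_p\times S_q}(V\otimes W,-)=\hom_{S_p}(V,\hom_{S_q}(W,-))$ together with finiteness to pull the tensor factor out). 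Then I would note that $\SC^n(\x\cdot\y)=\hom_{S_n}((\x\cdot\y)[n],\sgn_n)$ decomposes as the direct sum over $p+q=n$ of $\hom_{S_n}((\x\otimes\y)^p[n],\sgn_n)$, because $(\x\cdot\y)[n]=\bigoplus_{p+q=n}(\x\otimes\y)^p[n]$ as $S_n$-modules; matching this decomposition summand by summand against $\bigoplus_{p+q=n}\SC^p(\x)\otimes\SC^q(\y)$ via $\phi^p$ composed with the canonical map gives the bijection in degree $n$. Local finiteness of $\x$ (or $\y$) guarantees the finiteness hypothesis needed at every pair $(p,q)$ simultaneously.

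Next I would verify that $-\times-$ is a chain map, i.e. that $d(f\times g)=df\times g+(-1)^{|f|}f\times dg$. Using the explicit formula for $d$ from Theorem~\ref{diffCC} in terms of the left and right restrictions $z\mapsto z\bs(I\smallsetminus i)$ and $z\mapsto z\fs(I\smallsetminus i)$, and the explicit formula for $f\times g$ on a decomposition $(S,T)$ via $u_{S,T}$, this becomes a bookkeeping check: removing an element $i$ from $S\sqcup T$ either removes it from the $S$-block or from the $T$-block, and these two cases produce the two terms on the right-hand side; the signs $(-1)^{i-1}$ reassemble correctly because $u_{S,T}$ is monotone on each block, so deleting $i$ from $S$ contributes a sign $(-1)^{i'-1}$ with $i'$ the position of $i$ inside $S$, while deleting it from $T$ carries an extra shift by $|S|=p$ accounting for the $(-1)^{|f|}=(-1)^p$ factor, and the comparison of $\sch(S,T)$ for $I$ versus for $I\smallsetminus i$ is exactly what is needed. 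This is routine given the earlier formulas; alternatively one can observe that $-\times -$ is induced by the dga-bialgebra structure on $\Omega^*(\e)$ via the constructions of Section~\ref{sec:thecup}, where the Leibniz rule is built in, and that it descends to $\e^!$ compatibly, which sidesteps the sign computation entirely.

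The main obstacle is the bijectivity in the case where neither $\x$ nor $\y$ is assumed finite: this is genuinely false in general, which is why the hypothesis that one of them is locally finite is imposed, and the proof must be careful that for each fixed $n$ only finitely many pairs $(p,q)$ with $p+q=n$ occur, so that finiteness in each arity is exactly enough — there is no issue with infinite products versus infinite sums. Beyond that, everything is a matter of assembling the two isomorphisms already essentially established (the geometric repackaging $\phi^p$ and the Hom-tensor interchange over a field in characteristic zero) and checking the differentials match, which the explicit formulas make mechanical.
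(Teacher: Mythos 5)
Your proposal is correct and follows essentially the same route as the paper, which proves the K\"unneth theorem by assembling exactly the two ingredients you use: the equivariant isomorphisms $\phi^p$ built from the monotone bijections $u_{S,T}$ with the Schubert sign, and the hom--tensor interchange maps, which are isomorphisms over a field when $\x$ or $\y$ is locally finite; the paper leaves this assembly (and the compatibility with the differentials of Theorem~\ref{diffCC}) implicit, which you spell out. No gaps to report.
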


If $S$ is a subset of $[n]=\{1,\ldots,n\}$ 
with $m\leqslant n$ elements, and if 
$\sigma$ is a permutation of $S$, we 
regard $\sigma$ as a permutation of $[m]$ 
by means of the unique order preserving 
bijection $\lambda_S : S\longrightarrow 
[m]$. We say $(\sigma^1,\sigma^2)$ is a 
\emph{$(p,q)$-shuffle} of a finite set 
$I$ with $p+q$ elements whenever $\sigma^1$
is a permutation of a $p$-subset $S$ 
of $I$, $\sigma^2$ is a permutation of a 
$q$-subset $T$ of $I$, and $S\cap 
T=\varnothing$. Call $(S,T)$ the 
\emph{associated composition} of such a 
shuffle. If $(S,T)$ is a composition of 
$[n]$, we will write $\sch(S,T)$ for the 
\emph{Schubert statistic} of $(S,T)$, 
which counts the number of pairs $(s,t)
\in S\times T$ such that $s<t$ according
to the canonical ordering of $[n]$. Our 
result is the following

\begin{theorem}\label{cupformula} The cup product induced by the diagonal $\x\longrightarrow \x\cdot \x$ 
\[ - \smile - : \SC^p(\x)\otimes 
\SC^q(\x) \longrightarrow \SC^{p+q}(\x)\] is such that for equivariant maps $f : 
\x[p]\longrightarrow \sgn_p$ and $g:\x[q]\longrightarrow \sgn_q$, and $z\in \x(p+q)$,
\[ (f\smile g)(z) =  \sum_{(S,T)\vdash [p+q] } (-1)^{\sch(S,T)} f(\lambda_S(z\bs 
S)) g(\lambda_T(z\fs T))\] 
where the sum runs through decompositions of $[p+q]$ with $\# S=p$ and $\# T = 
q$. 
\end{theorem}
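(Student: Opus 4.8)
The plan is to reduce the formula to two things already available: the explicit description of the external product on the Koszul complex, and the shape of the diagonal given by Proposition~\ref{prop:coalg}. By definition the cup product $f\smile g$ of $f\in\SC^p(\x)$ and $g\in\SC^q(\x)$ is the composite $\Delta^*\circ(f\times g)$, where $-\times-\colon\SC^*(\x)\otimes\SC^*(\x)\to\SC^*(\x\cdot\x)$ is the external product and $\Delta^*$ denotes precomposition with the diagonal $\Delta\colon\x\to\x\cdot\x$; so for $z\in\x[p+q]$ we have $(f\smile g)(z)=(f\times g)\bigl(\Delta[p+q](z)\bigr)$. First I would use Proposition~\ref{prop:coalg} to write $\Delta[p+q](z)=\sum_{(S,T)\vdash[p+q]}z\bs S\otimes z\fs T$, the summand indexed by $(S,T)$ lying in the direct summand $\x[S]\otimes\x[T]$ of $(\x\cdot\x)[p+q]$, and then evaluate $f\times g$ on this sum term by term.

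For this I would feed in the explicit form of $-\times-$ obtained above. Since $f\times g$ is built by applying $\phi^p$ to the canonical image of $f\otimes g$ and extending by zero, it annihilates every summand $\x[S]\otimes\x[T]$ of $(\x\cdot\x)[p+q]$ with $\#S\neq p$, and on a summand with $\#S=p$ (so $\#T=q$) it sends $x\otimes y$ to $(-1)^{\sch(S,T)}\,f(\lambda_S x)\otimes g(\lambda_T y)$, where $\lambda_S\colon S\to[p]$ and $\lambda_T\colon T\to[q]$ are the order-preserving bijections. Substituting $\Delta[p+q](z)$ and discarding the terms with $\#S\neq p$ yields
\[
(f\smile g)(z)=\sum_{\substack{(S,T)\vdash[p+q]\\ \#S=p,\ \#T=q}}(-1)^{\sch(S,T)}\,f(\lambda_S(z\bs S))\otimes g(\lambda_T(z\fs T)).
\]
Because $\sgn_p$, $\sgn_q$ and $\sgn_{p+q}$ are all one-dimensional, the canonical identification of $\sgn_p\otimes\sgn_q$ with $\sgn_{p+q}$ turns each tensor $f(\lambda_S(z\bs S))\otimes g(\lambda_T(z\fs T))$ into the scalar product $f(\lambda_S(z\bs S))\,g(\lambda_T(z\fs T))$, giving the asserted formula. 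I would also note that $f\smile g$ is automatically $S_{p+q}$-equivariant, being $\Delta^*$ applied to the equivariant cochain $f\times g$.

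The one genuinely delicate point is the Schubert sign $(-1)^{\sch(S,T)}$: one must keep the several order-preserving identifications ($\lambda_S$, $\lambda_T$, and $\sgn_p\otimes\sgn_q\cong\sgn_{p+q}$) consistent so that exactly this sign --- with no stray factor of $(-1)^{pq}$ --- survives. This sign is, however, not new to the present argument: it is an artifact of the Koszul-duality description $\SC^*(\x)=\hom(\x,\e^!)$ and of the anticommutativity of the product of $\e^!\cong\Sigma^{-1}\e^-$ (recall $e_1e_2=-e_2e_1$), and it has already been built into the construction of $\phi^p$ and therefore of $-\times-$. Once that construction is cited, the rest of the proof is the purely formal substitution above, and I expect no further obstacle.
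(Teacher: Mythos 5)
There is a genuine gap: you have conflated two different products and assumed precisely the compatibility that the theorem is asserting. In the paper, the cup product is \emph{defined} at the level of the cochain complex $C^*(\x,\e)=\hom_\tau(\x,\Omega^*(\e))$, via the shuffle product $\mu_{\Omega^*(\e)}$ and precomposition with the diagonal; the Koszul complex $\SC^*(\x)=\hom(\x,\e^!)$ is a different, smaller complex, and the content of Theorem~\ref{cupformula} is to compute what this cobar-level product becomes when transported to $\SC^*(\x)$. Your argument instead takes as definition $f\smile g=(f\times g)\circ\Delta$ where $-\times-$ is the external product on the Koszul complex built from $\phi^p$ in the K\"unneth section. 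That makes the stated formula essentially a tautology, but it proves nothing about the cup product on $H^*(\x)$ as defined in Section 2, because nowhere in the paper (nor in your proposal) is it established that the $\phi^p$-based product on $\SC^*$ corresponds, under the identification of $\SC^*(\x)$ with (the cohomology of) $C^*(\x)$, to the product coming from $\mu_{\Omega^*(\e)}$. Deferring the Schubert sign to "it is already built into $\phi^p$" therefore begs the question: the sign has to be \emph{derived} from the cobar-level definition, not read off from a map that was introduced for a different purpose.

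The paper's proof supplies exactly the missing bridge. It first fixes a correspondence between an equivariant map $f:\x[p]\to\sgn_p$ and a cochain $\alpha:\x\to\e^{\cdot p}$ supported in cardinality $p$, via the antisymmetrizer $\nu_p=\sum_{\sigma\in S_p}(-1)^\sigma\sigma$ and the inverse assignment $\alpha(\sigma)(z)=\tfrac{(-1)^\sigma}{p!}f(z)$. It then computes $(\alpha\smile\beta)(F^1,F^2)(z)=\alpha(F^1)(z\bs F^1)\,\beta(F^2)(z\fs F^2)$ from the definition of the cup product on $C^*$, evaluates $(f\smile g)(z)=\sum_{\sigma\in S_{p+q}}(-1)^\sigma(\alpha\smile\beta)(\sigma)(z)$, reorganizes the sum over $S_{p+q}$ into a sum over $(p,q)$-shuffles with a fixed associated composition $(S,T)$, and uses Lemma~\ref{schstat} (sign of a shuffle equals $(-1)^{\sigma^1+\sigma^2+\sch(S,T)}$) so that the $p!\,q!$ shuffle terms produce exactly one factor $(-1)^{\sch(S,T)}$, cancelling the normalization $\tfrac{1}{p!q!}$. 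To repair your argument you would either have to reproduce this lifting-and-shuffle computation, or else prove separately that the quasi-isomorphism $\hom(\x,\Omega(\e))\to\hom(\x,\e^!)$ intertwines the $\mu_{\Omega^*(\e)}$-external product with the $\phi^p$-external product; either way, the shuffle/Schubert bookkeeping you set aside is the actual mathematical content of the theorem.
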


Before giving the proof, we begin with a few preliminary considerations. First, 
consider a $(p,q)$-shuffle $(\sigma^1,\sigma^2)$ of $[p+q]$, with associated 
composition $(S,T)$, and let $\sigma$ be the permutation of $[p+q]$ obtained by 
concatenating $\sigma^1$ and $\sigma^2$. 

\begin{lema}\label{schstat} For any $\sigma\in S_{p+q}$ and any 
$(p,q)$-composition $(S,T)$ of $[p+q]$, 
\begin{enumerate}
\setlength{\itemsep}{0pt}
  \setlength{\parskip}{0pt}
\item the sign of $\sigma$ is $(-1)^{\sigma^1+\sigma^2+\sch(S,T)}$, and
\item $(-1)^{\sch(S,T)} = (-1)^{\sch(T,S)+pq}$. 
\end{enumerate}
\end{lema}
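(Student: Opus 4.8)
The lemma is a concrete sign-counting statement about the symmetric group $S_{p+q}$, so I would prove it by elementary permutation combinatorics, with no reference to the cohomological machinery. The key object is the block permutation $\sigma$ obtained by concatenating $\sigma^1$ (acting on the $p$-subset $S$) and $\sigma^2$ (acting on the $q$-subset $T$); the idea is to factor $\sigma$ through the unstandardization bijection $u_{S,T}$ of the previous Künneth discussion, whose sign was already identified there as $(-1)^{\sch(S,T)}$.

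For part (1), I would write $\sigma = \rho \circ u_{S,T}^{-1}$ (up to conventions about which way the standardizing maps go), where $\rho = \sigma^1 \times \sigma^2$ acts within the blocks $[1,p]$ and $[p+1,p+q]$ after standardization. Concretely: $u_{S,T}$ is the monotone bijection sending $S$ to $[1,p]$ and $T$ to $[p+1,p+q]$; concatenating $\sigma^1,\sigma^2$ amounts to first applying $u_{S,T}$ to land in the standard blocks, then permuting each block by $\sigma^i$ (regarded via $\lambda_S,\lambda_T$ as elements of $S_p,S_q$), then applying $u_{S,T}^{-1}$ to return. The crucial point is that reordering $S$ and $T$ separately and then interleaving them back by a monotone map does not introduce further inversions beyond those internal to the blocks and those counted by $u_{S,T}$ itself. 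Since $\sgn(\sigma^1 \times \sigma^2) = (-1)^{\sigma^1 + \sigma^2}$ and $\sgn(u_{S,T}) = (-1)^{\sch(S,T)}$ (with $\sgn(u^{-1}) = \sgn(u)$), multiplicativity of the sign gives $\sgn(\sigma) = (-1)^{\sigma^1 + \sigma^2 + \sch(S,T)}$. The honest work here is checking that $\sch(S,T)$ really is the number of inversions of $u_{S,T}$: an inversion of $u_{S,T}$ is a pair placed out of order, and since $u_{S,T}$ is monotone on each block, its inversions are exactly the pairs $(s,t) \in S \times T$ with $s < t$ in $[p+q]$ but $u_{S,T}(s) > u_{S,T}(t)$, i.e. $s$ sent into $[1,p]$ and $t$ into $[p+1,p+q]$ — which is precisely $\sch(S,T)$. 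I would verify this by a small induction or by directly counting, perhaps citing the sign computation already made in the proof of the Künneth proposition.

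For part (2), I would compare $\sch(S,T)$ and $\sch(T,S)$ directly from the definition. A pair $(s,t) \in S \times T$ contributes to $\sch(S,T)$ iff $s < t$, and the same pair viewed in $T \times S$ contributes to $\sch(T,S)$ iff $t < s$; since $S$ and $T$ are disjoint, every one of the $pq$ pairs $(s,t) \in S\times T$ contributes to exactly one of the two statistics, so $\sch(S,T) + \sch(T,S) = pq$. Taking signs gives $(-1)^{\sch(S,T)} = (-1)^{pq - \sch(T,S)} = (-1)^{\sch(T,S) + pq}$, which is the claim. This part is essentially immediate once the definitions are unwound.

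**Main obstacle.** The only genuinely delicate point is part (1): pinning down the exact factorization $\sigma = u_{S,T}^{-1} \circ (\sigma^1 \times \sigma^2) \circ (\text{something})$ with all the standardizing bijections $\lambda_S$, $\lambda_T$, $u_{S,T}$ composed in the right order and orientation, and making sure no stray permutation has been absorbed or dropped. Once the bookkeeping of which map goes which way is fixed, the sign count is a one-line application of $\sgn$ being a homomorphism. I expect the cleanest route is to set it up so that $\sigma$ literally equals the composite whose sign is manifestly $(-1)^{\sigma^1+\sigma^2}\cdot(-1)^{\sch(S,T)}$, reusing verbatim the observation from the Künneth proof that $\sgn(u_{S,T}) = (-1)^{\sch(S,T)}$.
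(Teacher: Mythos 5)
Your overall route is essentially the paper's: part (1) is an inversion count (the paper simply states that $\operatorname{inv}\sigma=\operatorname{inv}\sigma^1+\operatorname{inv}\sigma^2+\sch(S,T)$, which is exactly what your factorization $\sigma=(\sigma^1\sqcup\sigma^2)\circ u_{S,T}^{-1}$ together with multiplicativity of $\sgn$ packages), while for part (2) you argue directly that disjointness of $S$ and $T$ forces $\sch(S,T)+\sch(T,S)=pq$, whereas the paper deduces (2) from (1) by comparing the two concatenations. Your version of (2) is correct, clean, and independent of (1).

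There is one bookkeeping slip in (1) that you should repair: your identification of the inversions of $u_{S,T}$ is stated backwards. If $s\in S$ is sent into $[1,p]$ and $t\in T$ into $[p+1,p+q]$ with $s<t$, then $u_{S,T}(s)<u_{S,T}(t)$, so such pairs are \emph{not} inversions. Since $u_{S,T}$ is monotone on each block, its inversions are precisely the pairs $a<b$ with $a\in T$ and $b\in S$, that is, $\operatorname{inv}(u_{S,T})=\#\{(s,t)\in S\times T : s>t\}$. Under the convention that $\sch(S,T)$ counts pairs $(s,t)\in S\times T$ with $s>t$ (the Aguiar--Mahajan convention, and the one the paper's own one-line proof implicitly uses), this equals $\sch(S,T)$ and your sign count goes through verbatim; under the paper's literal wording (``$s<t$'') your count, like the lemma as printed, differs by $(-1)^{pq}$, i.e.\ one should read $\sch(T,S)$ there. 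Correct that identification (and state which convention you use), and the proof is complete.
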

\begin{proof}
Indeed, by counting inversions, it follows that the number of inversions in 
$\sigma$ is precisely $\operatorname{inv}\sigma^1+\operatorname{inv} 
\sigma^2+\sch(S,T)$, which proves the first assertion. The second claims follows 
from the first and the fact $\sigma^1\sigma^2$ and $\sigma^2\sigma^1$ differ by 
 $pq$ transpositions.  
\end{proof}

If $\alpha : \x[p] \longrightarrow \e^{\cdot p}$ is a cochain, we 
associate to it the equivariant map $f : \x[p] \longrightarrow \sgn_p$ such that 
$f(z) = \alpha(\nu_p)(z)$ where $\nu_p =
\sum_{\sigma\in S_p} (-1)^\sigma \sigma$ is the antisymmetrization element. Conversely, given such an equivariant 
map, we associate to it the cochain $\alpha : \x[p] \longrightarrow \e^{\cdot p}$ 
such that $\alpha(\sigma)(z) = \frac{(-1)^\sigma}{p!} f(z)$. We now proceed to 
the proof of Theorem~\ref{cupformula}.

\begin{proof} To calculate a representative of the class of $f\smile g$, we lift 
first lift the maps $f:\x[p]\longrightarrow \sgn_p $ and $g : \x[q] \longrightarrow \sgn_q$ 
to cochains $\alpha : \x \longrightarrow \e^{\cdot p},\beta: \x\longrightarrow \e^{\cdot 
q}$ that are supported in $\x[p]$ and $\x[q]$ respectively, and represent $f$ and 
$g$ according to the correspondence in the previous paragraph. We compute for any decomposition $(F^1,F^2)$ of a finite set $I$ 
and any $z\in\x[I]$ that
\[ (\alpha\smile \beta)(F^1,F^2)(z) = \alpha(F^1)(z\bs F^1)\beta(F^2)(z\fs 
F^2).\]
Now consider $z\in \x[p+q]$. If $\sigma$ is a permutation of $[p+q]$, write 
$(\sigma^1,\sigma^2)$ for the $(p,q)$-shuffle obtained by reading $\sigma(1) 
\cdots \sigma(p)$ as a permutation of $S_\sigma = \{\sigma (1),\ldots,\sigma 
(p)\}$ and by reading $\sigma(p+1)\cdots \sigma(p+q)$ as a permutation of 
$T_\sigma = \{\sigma(p+1),\ldots,\sigma (p+q)\}$. Then 
\begin{align*}
(f\smile g)(z) &=  \sum_{\sigma\in S_{p+q}} 
(-1)^\sigma(\alpha\smile\beta)(\sigma)(z)\\
			   &= \sum_{\sigma \in S_{p+q}}(-1)^\sigma \alpha(\sigma^1)(z\bs S_\sigma) 
\beta(\sigma^2)(z\fs T_\sigma)
			   \end{align*}
Fix a composition $(S,T)$ of $[p+q]$. In the sum above, the permutations 
$\sigma$ with $(S_\sigma,T_\sigma)=(S,T)$ are the $(p,q)$-shuffles with 
associated composition $(S,T)$. We may then replace the sum throughout $S_{p+q}$ 
with the sum throughout $(p,q)$-compositions $(S,T)$ of $[p+q]$ and in turn with 
the sum throughout shuffles $(\sigma^1,\sigma^2)$ of $(S,T)$. This reads
\[ (f\smile g)(z) =  \sum_{(S,T) \vdash [p+q]} \sum_{(\sigma^1,\sigma^2)}  
(-1)^{\sigma^1\sigma^2} \alpha(\sigma^1)(z\bs S)\beta(\sigma^2)(z\fs T). \] 
We now note that $\alpha(\sigma^1)(z\bs S) = 
\alpha(\lambda_S(\sigma^1))(\lambda_S(z\bs S))$, that the sign of 
$\lambda_S(\sigma^1)\in S_p$ is $(-1)^{\sigma^1}$, and that the same 
considerations apply to $\beta$, so we obtain that 
\[ (f\smile g)(z) = \frac{1}{p!q!}\sum_{(S,T) \vdash [p+q]} 
\sum_{(\sigma^1,\sigma^2)}  (-1)^{\sigma^1\sigma^2+\sigma^1+\sigma^2} 
f(\lambda_S(z\bs S))g(\lambda_T(z\fs T)) .\]
Using Lemma~\ref{schstat} finishes the proof: the sum 
$\sum_{(\sigma^1,\sigma^2)}  (-1)^{\sigma^1\sigma^2+\sigma^1+\sigma^2}$ 
consists of $p!q!$ instances of $(-1)^{\sch(S,T)}$.  \end{proof}

Suppose now that $\x$ is a cosymmetric $\e$-bicomodule. Then Theorem~\ref{diffCC} 
proves the differential in $\SC^*(\x)$ is trivial, while Lemma~\ref{schstat} 
along with Proposition~\ref{cupformula} prove that the cup product in 
$\SC^*(\x)$ is graded commutative. 
We have obtained the following result:

\begin{theorem} Suppose that $\x$ is a cosymmetric $\e$-bicomodule. Then 
$\SC^*(\x)$ is isomorphic to the cohomology algebra $H^*(\x)$. In particular, $H^*(\x)$ 
is graded commutative.  \qed
\end{theorem}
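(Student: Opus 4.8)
The plan is to combine the two structural facts that have just been established for a cosymmetric $\e$-bicomodule $\x$. First I would invoke Theorem~\ref{diffCC}: when $\x$ is cosymmetric, the left and right restrictions agree, so $z\bs(I\smallsetminus i) = z\fs(I\smallsetminus i)$ for every $i$, and the formula for $df(z)$ collapses to a telescoping sum of pairs each of the form $f(z\bs(I\smallsetminus i)) - f(z\fs(I\smallsetminus i)) = 0$. Hence $d = 0$ on all of $\SC^*(\x)$, and consequently $H^*(\x)$ is computed by $\SC^*(\x)$ with zero differential, i.e.\ $H^p(\x) = \SC^p(\x) = \hom_{S_p}(\x[p],\sgn_p)$ as a graded $\kk$-module. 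Since the cup product on $H^*(\x)$ is, by construction, induced from the cup product on $\SC^*(\x)$ (which already lives at the cochain level), the identification $H^*(\x)\cong \SC^*(\x)$ is one of graded algebras, not merely of graded modules; this is the content of the first sentence of the claim.

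For the graded commutativity I would appeal to Theorem~\ref{cupformula} together with Lemma~\ref{schstat}. Given $f\in\SC^p(\x)$ and $g\in\SC^q(\x)$ and $z\in\x[p+q]$, Theorem~\ref{cupformula} gives
\[
(f\smile g)(z) = \sum_{(S,T)\vdash[p+q]} (-1)^{\sch(S,T)} f(\lambda_S(z\bs S))\, g(\lambda_T(z\fs T)),
\]
and similarly
\[
(g\smile f)(z) = \sum_{(T,S)\vdash[p+q]} (-1)^{\sch(T,S)} g(\lambda_T(z\bs T))\, f(\lambda_S(z\fs S)),
\]
where in the second sum $S$ now ranges over $p$-subsets and $T$ over their complementary $q$-subsets. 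Reindexing the second sum over the same collection of decompositions $(S,T)$ of $[p+q]$ with $\#S = p$, and using that $\x$ is cosymmetric so that $z\bs S = z\fs S$ and $z\bs T = z\fs T$, the two sums have the same terms up to the order of the scalar product $f(\cdots)g(\cdots)$ versus $g(\cdots)f(\cdots)$ — which is immaterial since these are scalars — and up to replacing $(-1)^{\sch(S,T)}$ by $(-1)^{\sch(T,S)}$. By part~(2) of Lemma~\ref{schstat}, $(-1)^{\sch(T,S)} = (-1)^{\sch(S,T)+pq}$, so term by term $(g\smile f)(z) = (-1)^{pq}(f\smile g)(z)$, which is exactly the graded-commutativity relation $g\smile f = (-1)^{|f||g|} f\smile g$.

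The only genuinely delicate point is bookkeeping: one must be careful that the reindexing $(S,T)\mapsto(T,S)$ matched against the formula for $g\smile f$ really does run over the same index set and that the order-preserving relabelings $\lambda_S,\lambda_T$ in the two expressions are literally the same bijections (they are, since $\lambda_S$ depends only on $S$ as a subset of $[p+q]$, not on which factor it indexes). Given cosymmetry this is automatic; the sign $(-1)^{pq}$ is the entire content. I expect no other obstacle — both inputs are already proved, and the argument is a direct substitution. Finally, combining the two parts: $H^*(\x)\cong\SC^*(\x)$ as graded algebras, and $\SC^*(\x)$ is graded commutative, so $H^*(\x)$ is graded commutative, completing the proof.
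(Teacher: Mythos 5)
Your argument is correct and is essentially the paper's own proof: the paper likewise deduces vanishing of the differential from Theorem~\ref{diffCC} (cosymmetry makes each summand $f(z\bs(I\smallsetminus i))-f(z\fs(I\smallsetminus i))$ zero) and graded commutativity from Theorem~\ref{cupformula} combined with part~(2) of Lemma~\ref{schstat}. Your extra bookkeeping about reindexing $(S,T)\mapsto(T,S)$ and the relabelings $\lambda_S,\lambda_T$ is a fine elaboration of the same route, not a different one.
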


\subsection{Some computations.}\label{furthercomp} 

To illustrate the use of the Koszul complex we
compute the cohomology groups of some of the 
coalgebras considered by Aguiar and Mahajan and, in doing so, try to 
convince the reader of the usefulness of the complex we obtained.
To begin with, we include a new computation that 
is greatly simplified with the use of 
the small complex. We remark that,
as far as the author knows, the only
complete computation of such cohomology groups 
that was known before the
methods in this paper were introduced,
are $H^*(\spe{E})$ and the first
two cohomology groups of~$H^*(\spe{L})$,
along with a single cohomology class in $H^2(\L)$.
Throughout, we will use the following fact:

\begin{obs}
If $\kk$ is not of characteristic two, then
a functional $f\in \hom_{S_p}(\x[p],\sgn_p)$
vanishes on every $z\in \x[p]$ that is fixed by
an odd permutation. 
\end{obs}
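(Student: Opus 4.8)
The plan is to read this off directly from the $S_p$-equivariance built into the definition of $f$. Concretely, suppose $z\in\x[p]$ is fixed by some odd permutation $\sigma\in S_p$, so that $\x[\sigma](z)=z$. I would then apply $f$ to both sides and use that $f$ intertwines the $S_p$-actions on source and target, to get
\[ f(z)=f\bigl(\x[\sigma](z)\bigr)=\sigma\cdot f(z)=\sgn(\sigma)\,f(z)=-f(z), \]
the third equality because $S_p$ acts on $\sgn_p$ through the sign character, and the last because $\sigma$ is odd.

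Rearranging gives $2f(z)=0$. Since $f(z)$ lives in $\sgn_p$, which as a $\kk$-module is simply $\kk$, this says $f(z)$ is a $2$-torsion element of $\kk$; under the hypothesis that $\kk$ is not of characteristic two --- so that multiplication by $2$ is injective on $\kk$ --- we conclude $f(z)=0$, which is the assertion.

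There is no genuine obstacle here: the only points to keep straight are that $f$ is equivariant for the sign action on the target (not the trivial one) and that its values lie in a rank-one module, so that the relation $2f(z)=0$ may be cancelled. One can also phrase the argument slightly more structurally: the $\kk$-submodule of $\x[p]$ spanned by elements fixed by odd permutations is carried, by any equivariant map into $\sgn_p$, into the $2$-torsion of $\kk$, which vanishes by hypothesis. This is the form in which the observation is used in the computations that follow, where it trims down the relevant $\hom$-spaces $\hom_{S_p}(\x[p],\sgn_p)$.
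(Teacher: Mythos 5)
Your argument is correct and is essentially the paper's own proof: from $\sigma z=z$ with $\sigma$ odd and equivariance one gets $f(z)=-f(z)$, hence $2f(z)=0$, which forces $f(z)=0$ away from characteristic two. Your extra remark about needing multiplication by $2$ to be injective on $\kk$ is a reasonable precision of the same cancellation the paper leaves implicit.
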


Indeed, we see that if $\sigma z=z$ and if $\sigma$ is
odd then $-f(z) = f(\sigma z) = f(z)$, from where the
claim follows. 
\bigskip

\textit{The species of singletons and suspension.} Define the species $s$ of singletons so that for every finite set $I$, $\spe{S}[I]$ is 
trivial whenever $I$ is not a singleton, and is $\kk$-free with basis $I$ if $I$ 
is a singleton. The species $\spe{S}$ admits a unique $\e$-bicomodule structure. By 
induction, it is easy to check that, for each integer $q\geqslant 1$, the 
species $\spe{S}^{\tt q}$, which we write more simply by $\spe{S}^q$, is such that $\spe{S}^q[I]$ 
is $\kk$-free of dimension $q!$ if $I$ has $q$ elements with basis the linear 
orders on $I$, and the action of the symmetric group on $I$ is the regular 
representation, while $\spe{S}^q[I]$ is trivial in any other case. By convention, set 
$\spe{S}^0=\mathbb{1}$, the unit species. It follows that the sequence of species 
$\spe{S}=(\spe{S}^n)_{n\geqslant 0}$ consists of weakly projective species, and 
we can completely describe their cohomology groups. They are the analogues of 
spheres for species, its first property consisting of having cohomology 
concentrated in the right dimension:

\begin{prop} For each integer $n\geqslant 0$, the species $\spe{S}^n$ has 
\[ H^q(\spe{S}^n) = 
	\begin{cases}
		\kk & \text{if $q=n$ }, \\
		0 & \text{else}.
			\end{cases}
		\]
\end{prop}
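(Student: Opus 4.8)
The plan is to use the Koszul complex $K^*(\spe{S}^n)$ provided by Theorem~\ref{thm:mainresult}, which is licit since each $\spe{S}^n$ is weakly projective, as observed just before the statement. Thus $K^q(\spe{S}^n) = \hom_{S_q}(\spe{S}^n[q],\sgn_q)$. Since $\spe{S}^n[I]$ vanishes unless $\#I = n$, the complex $K^*(\spe{S}^n)$ is concentrated in a single degree, namely $q = n$: so there is no differential to compute, and $H^q(\spe{S}^n) = 0$ for $q \neq n$ immediately, while $H^n(\spe{S}^n) = \hom_{S_n}(\spe{S}^n[n],\sgn_n)$.

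It remains to identify this single group with $\kk$. First I would recall from the paragraph preceding the statement that $\spe{S}^n[n]$ is $\kk$-free of rank $n!$ with basis the linear orders on $[n]$, on which $S_n$ acts by the regular representation $\kk S_n$. Hence $\hom_{S_n}(\kk S_n,\sgn_n)$ is canonically isomorphic, as a $\kk$-module, to $\sgn_n$ itself (a map out of the free module on one generator-orbit is determined by where the identity order goes, and equivariance forces the rest; one checks directly that any choice of image in $\sgn_n$ extends consistently since $\kk S_n$ is free). As $\sgn_n \cong \kk$ as a $\kk$-module, we get $H^n(\spe{S}^n) \cong \kk$, completing the computation. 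For the degenerate case $n = 0$ one notes $\spe{S}^0 = \mathbb{1}$, so $K^0 = \hom_{S_0}(\kk,\sgn_0) = \kk$ and all higher terms vanish, consistent with the formula.

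I do not expect any real obstacle here: the entire content is that $\spe{S}^n$ is supported in a single cardinality, which collapses the Koszul complex to one term, plus the standard fact that $\hom_G(\kk G, M) \cong M$. The only point requiring a moment's care is making sure the identification $\hom_{S_n}(\kk S_n, \sgn_n) \cong \kk$ is as $\kk$-modules and does not secretly carry extra structure that would obstruct freeness of rank one; but since we are only asserting an isomorphism of $\kk$-modules in the statement, this is immediate. If one wished, one could alternatively phrase the rank-one claim via the Observation preceding the statement — a functional in $\hom_{S_n}(\spe{S}^n[n],\sgn_n)$ is pinned down by its value on a single linear order — though invoking $\hom_G(\kk G,-)$ directly is cleanest.
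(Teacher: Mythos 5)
Your proof is correct and follows essentially the same route as the paper: both use weak projectivity to pass to the Koszul complex $\SC^*(\spe{S}^n)$, observe it is concentrated in degree $n$ because $\spe{S}^n$ is supported in cardinality $n$, and identify $\hom_{S_n}(\kk S_n,\sgn_n)\cong\kk$. Your extra remarks (the $n=0$ case, the $\hom_G(\kk G,-)$ identification) only spell out details the paper leaves implicit.
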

\begin{proof}
Fix $n\geqslant 0$. By the remarks preceding the proposition, it follows that 
$\SC^q(\spe{S}^n)$ always vanishes except when $q=n$, where it equals 
$\hom_{S_n}(\kk S_n,\sgn_n)$, and this is one dimensional. Because each $\spe{S}^n$ is 
weakly projective, $\SC^*(\spe{S}^n)$ calculates $H^*(\spe{S}^n)$, and the claim follows.
\end{proof}

The above motivates us to check 
whether $\spe{S}\cdot -$ acts as a suspension for $H^*(-)$. Assume that $\x$ is 
weakly projective, so we may use $\SC^*(\x)$ to compute $H^*(\x)$. We claim 
that $\SC^*(\spe{S}\cdot\x)$ identifies with $\SC^*(\x)[-1]$. Indeed, for this it suffices 
to note, first, that $(\spe{S}\cdot \x)(n)$ is isomorphic, as an $\kk S_n$-module, to the 
induced representation $\kk\tt \x(n-1)$ from the inclusion $S_1\times S_{n-1} 
\hookrightarrow S_n$, and second, that the restriction of the sign 
representation of $S_n$ under this inclusion is the sign representation of 
the subgroup $S_{n-1}$, so that:
\begin{align*}
\hom_{S_n}((\spe{S}\cdot\x)[n],\sgn_n) &= \hom_{S_n}(\operatorname{Ind}_{S_1\times 
S_{n-1}}^{S_n}(\kk\tt \x[n-1]),\sgn_n))  \\
 	&\simeq \hom_{S_1\times S_{n-1}}(\kk\tt \x[n-1] , \operatorname{Res}_{S_1\times 
S_{n-1}}^{S_n} \sgn_n)\\
 	& \simeq \hom_{S_{n-1}}(\x[n-1],\sgn_{n-1}).
\end{align*}
A bit more of a calculation shows the differentials are the correct ones. By 
induction, of course, we obtain that $\spe{S}^j\cdot\x$ has the cohomology of $\x$, only 
moved $j$ places up.

\begin{prop}
Assume $\x$ is weakly projective. For each $j$,
the suspension $s^j \x=\spe{S}^j\cdot \x$ is also weakly projective,
and there is a natural suspension isomorphism
$ H^*(s^j\x) \longrightarrow  s^j H^*(\x)$
in cohomology groups. \qed
\end{prop}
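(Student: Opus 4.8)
The plan is to deduce everything from the Koszul complex $\SC^*(-)$ of Theorem~\ref{thm:mainresult}: once $s^j\x=\spe{S}^j\cdot\x$ is known to be weakly projective, that complex computes $H^*(s^j\x)$, and I will identify it explicitly with a degree-$j$ shift of $\SC^*(\x)$. I would carry out the general $j$ directly rather than iterating the $j=1$ case sketched just above the statement.

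\textit{Weak projectivity and the module identification.} Since $\spe{S}^j[n]$ vanishes unless $n=j$, where it is the regular module $\kk S_j$, the definition of the Cauchy product yields a natural $\kk S_n$-isomorphism
\[ (\spe{S}^j\cdot\x)[n]\;\cong\;\operatorname{Ind}_{S_j\times S_{n-j}}^{S_n}\bigl(\kk S_j\tt\x[n-j]\bigr), \]
the summand carrying the outer tensor action. As $\x[n-j]$ is $\kk S_{n-j}$-projective by hypothesis and $\kk S_j$ is $\kk S_j$-free, $\kk S_j\tt\x[n-j]$ is a direct summand of a free $\kk(S_j\times S_{n-j})$-module; induction along the finite-index subgroup $S_j\times S_{n-j}\le S_n$ (over which $\kk S_n$ is free) preserves projectivity, so $(\spe{S}^j\cdot\x)[n]$ is $\kk S_n$-projective for every $n$. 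This proves the first assertion, and needs no hypothesis on $\kk$. For the second, combine this isomorphism with the $(\operatorname{Ind},\operatorname{Res})$-adjunction, the fact that $\operatorname{Res}_{S_j\times S_{p-j}}^{S_p}\sgn_p=\sgn_j\tt\sgn_{p-j}$, and the canonical isomorphism $\hom_{S_j}(\kk S_j,\sgn_j)\cong\kk$ (evaluation at the identity):
\begin{align*}
\SC^p(s^j\x)
&=\hom_{S_p}\bigl(\operatorname{Ind}_{S_j\times S_{p-j}}^{S_p}(\kk S_j\tt\x[p-j]),\,\sgn_p\bigr)\\
&\cong\hom_{S_j\times S_{p-j}}\bigl(\kk S_j\tt\x[p-j],\,\operatorname{Res}_{S_j\times S_{p-j}}^{S_p}\sgn_p\bigr)\\
&\cong\hom_{S_{p-j}}(\x[p-j],\,\sgn_{p-j})=\SC^{p-j}(\x),
\end{align*}
naturally in $\x$. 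Thus $\SC^*(s^j\x)\cong\SC^*(\x)[-j]$ as graded $\kk$-modules.

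\textit{Differentials, and the main obstacle.} It remains to match the differentials, which I would do with the explicit formula of Theorem~\ref{diffCC}: on a summand $\spe{S}^j[S]\tt\x[T]$ over $[p+1]$ the restrictions $z\bs([p+1]\smallsetminus i)$ and $z\fs([p+1]\smallsetminus i)$ vanish whenever $i\in S$, since $[p+1]\smallsetminus i$ then has no block of size $j$ available for the $\spe{S}^j$-factor, so the alternating sum runs only over $i\in T$; applying the monotone relabelling $\lambda_T$, under which the $j$ elements of $S$ occupy the first $j$ positions, turns that sum into the differential of $\SC^*(\x)$ up to a single global sign $(-1)^j$. An isomorphism of complexes up to an overall sign still induces an isomorphism on cohomology, so $H^p(s^j\x)\cong H^{p-j}(\x)$, i.e.\ the asserted natural suspension isomorphism $H^*(s^j\x)\to s^jH^*(\x)$. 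I expect this last sign computation to be the only genuinely delicate point: one must confirm that the interaction of the Koszul sign rule with the degree-$j$ shift and with the relabellings $\lambda_S,\lambda_T$ produces one uniform sign rather than a position-dependent discrepancy, so that naturality survives passage to cohomology. Everything else — the module identification, Frobenius reciprocity, and stability of projectivity under induction — is routine, and as an alternative one can simply iterate the $j=1$ case carried out before the statement, which bypasses the sign bookkeeping at the cost of a less transparent isomorphism.
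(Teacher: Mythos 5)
Your argument is essentially the paper's: the paper identifies $(\spe{S}\cdot\x)[n]$ with $\operatorname{Ind}_{S_1\times S_{n-1}}^{S_n}(\kk\tt\x[n-1])$, applies Frobenius reciprocity and the restriction of the sign representation, notes that "a bit more of a calculation" matches the differentials, and then iterates to get general $j$, whereas you do general $j$ in one step and spell out the differential comparison via Theorem~\ref{diffCC} (including the vanishing of restrictions removing points from the $\spe{S}^j$-block) in somewhat more detail than the paper does. This is a correct proof along the same lines, and your flagged sign check is exactly the "bit more of a calculation" the paper leaves implicit.
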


\textit{The exponential species.}
Every structure on a set of cardinal larger than $1$ over the exponential 
species $\e$ is fixed by an odd permutation: if $I$ is a finite set with more 
than one element, there is a transposition $I\longrightarrow I$, and it fixes 
$e_I$. It follows that $\SC^q(\e)$ is zero for $q>1$, and it is immediate 
that $\SC^0(\e)$ and $\SC^1(\e)$ are one dimensional, while we already know 
$d=0$. Thus $H^q(\e)$ is zero for $q>1$ and is isomorphic to $\kk$ for $q\in 
\{0,1\}$. The cup product is then completely determined. This is in line with the computations
done in the thesis~\cite{tesisJavier} of J. Coppola:

\begin{prop}[J. Coppola]
The cohomology algebra of $\e$ is isomorphic to
the exterior algebra $k[t]/(t^2)$ in one generator. 
\qed
\end{prop}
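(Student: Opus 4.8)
The plan is to bootstrap entirely from the additive computation already carried out in the paragraph preceding the statement. There it is observed that $\SC^*(\e)$ computes $H^*(\e)$---since $\e$ is weakly projective (say, over a field of characteristic zero, as each $\e[n]$ is the one-dimensional trivial $S_n$-module)---that $\SC^q(\e)=0$ for $q\geqslant 2$ because every $e_I$ with $\#I\geqslant 2$ is fixed by a transposition, that $\SC^0(\e)$ and $\SC^1(\e)$ are each one-dimensional, and that the differential vanishes since $\e$ is cosymmetric (Theorem~\ref{diffCC}). Consequently $H^q(\e)\cong\kk$ for $q\in\{0,1\}$ and $H^q(\e)=0$ otherwise. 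The only remaining work is to read off the algebra structure carried by the cup product associated to the diagonal $\Delta:\e\to\e\cdot\e$.

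First I would identify the unit. By Proposition~\ref{prop:coalg} the diagonal of $\e$ is $\Delta(S,T)(e_I)=e_S\otimes e_T$, so that $e_I\bs S=e_S$ and $e_I\fs T=e_T$. Let $\varepsilon_0\in\SC^0(\e)$ be the functional with $\varepsilon_0(e_\varnothing)=1$, which generates $H^0(\e)$. Applying the cup-product formula of Theorem~\ref{cupformula} to $\varepsilon_0$ and an arbitrary $g\in\SC^q(\e)$: the sum defining $(\varepsilon_0\smile g)(e_{[q]})$ ranges over decompositions of $[q]$ whose first block is empty, of which there is exactly one, namely $(\varnothing,[q])$, with Schubert statistic zero; hence $(\varepsilon_0\smile g)(e_{[q]})=\varepsilon_0(e_\varnothing)\,g(e_{[q]})=g(e_{[q]})$, and symmetrically for $g\smile\varepsilon_0$. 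So $\varepsilon_0$ is a two-sided unit.

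Now let $t$ be a generator of $H^1(\e)\cong\kk$. Any product of two homogeneous classes of strictly positive degree lands in $H^{\geqslant 2}(\e)=0$, so in particular $t\smile t=0$. Therefore $H^*(\e)$ is the graded $\kk$-algebra generated by the degree-one class $t$ subject to the single relation $t^2=0$; that is, $H^*(\e)\cong\kk[t]/(t^2)$, the exterior algebra on one generator. (As a consistency check, the cosymmetric case of Theorem~\ref{cupformula} already guarantees $H^*(\e)$ is graded commutative, which such an algebra automatically is.) There is no genuine obstacle here: once the additive ranks are in hand the algebra structure is essentially forced, and the single point worth writing out is the one-line verification, via the explicit cup-product formula, that the degree-zero generator acts as the identity, thereby ruling out any other two-dimensional graded algebra structure.
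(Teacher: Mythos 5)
Your argument is correct and follows the paper's own route: the additive computation (vanishing of $\SC^q(\e)$ for $q\geqslant 2$, one-dimensionality in degrees $0$ and $1$, zero differential by cosymmetry) forces $t\smile t=0$ for degree reasons, so the algebra is $\kk[t]/(t^2)$, which is exactly how the paper disposes of the proposition. Your explicit check via Theorem~\ref{cupformula} that the degree-zero class is a two-sided unit is a small detail the paper leaves implicit, but it does not change the approach.
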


\textit{The species of partitions.}  
The species of partitions $\spe{P}$ assigns to each finite set $I$ the collection 
$\spe{P}[I]$ of partitions $\x$ of $I$, that is, families $\{X_1,\ldots,X_t\}$ of 
disjoint non-empty subsets of $I$ whose union is $I$. There is a left 
$\e$-comodule structure on $\spe{P}$ defined as follows: if $X$ is a partition of $I$ 
and $S\subset I$, $X\ps S$ is the partition of $S$ obtained from the non-empty 
blocks of $\{ x\cap S :x\in X\}$. We already
noted there is an inclusion $\e\longrightarrow \spe{P}$. 
One can check that for \emph{any} bicomodule $\x$ we have a cocycle
$\kappa : \x\longrightarrow \e$ such that $\kappa(I)(z) = |I|e_I$,
which we call the \emph{cardinality cocycle.} In case $\x$ is
cosymmetric, one can check this is not a boundary.
 
\begin{prop} 
The cohomology group $H^0(\spe{P})$ is free of rank one, and $H^1(\spe{P})$ is free 
of rank one generated by the cardinality cocycle. In fact, the inclusion 
$\e\longrightarrow \spe{P}$ induces an isomorphism of
commutative algebras $\SC^*(\spe{P}) \longrightarrow 
\SC^*(\e)$.  
\end{prop}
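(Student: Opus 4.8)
The plan is to exploit the cosymmetry of $\spe{P}$ so that Theorem~\ref{diffCC} makes the differential in $\SC^*(\spe{P})$ trivial, reducing everything to a computation of the graded vector space $\SC^*(\spe{P})$ together with the algebra map induced by $\e\hookrightarrow\spe{P}$. Since $\SC^q(\spe{P}) = \hom_{S_q}(\kk\spe{P}[q],\sgn_q)$, the whole statement will follow once I show this $\hom$-space is zero for $q\geqslant 2$, one-dimensional for $q\in\{0,1\}$, and that the inclusion $\e\hookrightarrow\spe{P}$ induces an isomorphism in each of the surviving degrees. The low degrees $q=0,1$ are immediate: $\spe{P}[\varnothing]$ and $\spe{P}[\{1\}]$ are singletons, and on a singleton the sign representation is trivial, so $\SC^0$ and $\SC^1$ are free of rank one, and $\e\hookrightarrow\spe{P}$ is an isomorphism there; moreover the cardinality cocycle $\kappa$ is nonzero in $H^1$ by the remark preceding the proposition, so it generates.

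The crux is the vanishing $\hom_{S_q}(\kk\spe{P}[q],\sgn_q)=0$ for $q\geqslant 2$, and here I would invoke the Observation preceding these computations: a functional in $\hom_{S_q}(\x[q],\sgn_q)$ vanishes on any structure fixed by an odd permutation (assuming $\kk$ is not of characteristic two; the characteristic-two case, if it matters, can be treated by a direct basis argument, or the hypotheses of the paper restrict to that setting). So it suffices to check that every partition $X$ of $[q]$, $q\geqslant 2$, is fixed by some transposition. If $X$ has a block with at least two elements, any transposition of two elements inside that block fixes $X$. If every block of $X$ is a singleton --- i.e.\ $X$ is the discrete partition --- then \emph{any} transposition of $[q]$ fixes $X$, since it merely permutes the singleton blocks among themselves and $X$ is an unordered family. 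In either case $X$ is fixed by an odd permutation, so every $f\in\hom_{S_q}(\kk\spe{P}[q],\sgn_q)$ kills all basis elements, whence $\SC^q(\spe{P})=0$ for $q\geqslant 2$.

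Putting these together: $\SC^*(\spe{P})$ is concentrated in degrees $0$ and $1$, one-dimensional in each, with zero differential by cosymmetry, so $H^q(\spe{P})\cong\kk$ for $q\in\{0,1\}$ and vanishes otherwise; $H^1(\spe{P})$ is generated by $\kappa$, which is not a boundary. The same computation applied to $\e$ (already recorded above, where $\SC^*(\e)$ is one-dimensional in degrees $0,1$ and zero elsewhere) shows the inclusion $\e\hookrightarrow\spe{P}$ induces, degreewise, a map between one-dimensional spaces in degrees $0,1$; it is an isomorphism in degree $0$ trivially, and in degree $1$ because it sends the generator of $\SC^1(\e)$ (the cardinality cocycle of $\e$, namely $e_I\mapsto |I|e_I$) to the cardinality cocycle of $\spe{P}$, which generates $\SC^1(\spe{P})$. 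Since both sides vanish in higher degrees, $\SC^*(\spe{P})\to\SC^*(\e)$ is an isomorphism of graded vector spaces, and it is an algebra map because it is induced by the bicomodule map $\e\hookrightarrow\spe{P}$ compatibly with the diagonals; by the cosymmetric case of Theorem~\ref{cupformula} it is then an isomorphism of commutative algebras. The only place to be careful is checking that the diagonal on $\spe{P}$ from Proposition~\ref{prop:coalg} is intertwined by the inclusion with the diagonal on $\e$ --- this is a direct unravelling of the restriction maps $X\mapsto X\ps S$ and the fact that $e_I\mapsto e_S\otimes e_T$, and I expect it to be the only mildly fiddly point, everything else being immediate from the Observation.
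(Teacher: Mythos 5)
Your argument is essentially the paper's proof: the Observation kills $\SC^q(\spe{P})$ for $q\geqslant 2$ because every partition of a set with at least two elements is fixed by a transposition (your explicit treatment of the discrete partition is a welcome detail the paper leaves implicit), the degrees $0,1$ are visibly one-dimensional, and cosymmetry together with Theorem~\ref{diffCC} makes the differential vanish, so the inclusion induces the stated isomorphism. The only slip is directional and harmless: the induced map is restriction along $\e\hookrightarrow\spe{P}$, hence goes $\SC^*(\spe{P})\to\SC^*(\e)$ and sends the cardinality cocycle of $\spe{P}$ to that of $\e$, not the other way around.
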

\begin{proof}
A partition of a set with at least two elements is fixed by a transposition, and 
this implies that $\SC^j(\spe{P})=0$ for 
$j\geqslant 2$. On the other hand, $\SC^0(\spe{P})$ and $\SC^1(\spe{P})$ are both 
$\kk$-free of rank one, and we already know from Proposition~\ref{diffCC} that the 
differential of $\SC^*(\spe{P})$ is zero. This proves both claims. 
\end{proof}

\textit{The species of linear orders.}
The set species of linear orders $\spe{L}_0$ assigns to each finite
set $I$ set set $\spe{L}_0[I]$ of linear (also called total) orders
on $I$. It admits a cocommutative set coalgebra structure for which 
$\spe{L}_0[I] \longrightarrow \spe{L}_0[S]$ assigns a linear order on
$I$ to its restriction on $S\subseteq I$.
The resulting $\kk S_j$-module $\spe{L}[j]$ is the regular
representation of $S_j$ for each $j\geqslant 0$, and it
follows that the $\kk$-module $\hom_{S_j}(\spe{L}[j],\sgn_j)$ is $\kk$-free 
of rank one. Since $\L$ is cocommutative, by
virtue of Theorem~\ref{diffCC}, the computation ends here: 
the differential on this Koszul complex is identically zero.
Each $\SC^j(\L)$ is 
one dimensional generated by the map $f_j : \L[j]\longrightarrow \kk$ that assigns 
$\sigma\longmapsto (-1)^\sigma$. A calculation, which we omit, shows the following. It is useful to note
that the element $f_1$ corresponds to the cardinality
cocycle of $\spe{L}$ and that $f_2$ corresponds
to the Schubert cocycle defined by Aguiar and Mahajan~\cite{Aguiar}*{Section 9.7.1}.

\begin{prop} The algebra $\SC^*(\spe{L})$ is generated by the elements $f_1$ and 
$f_2$, so that if $f_p$ is the generator of $\SC^p(\spe{L})$, we have 
\begin{align*}
& f_{2p} \smile f_{2q} = \binom{p+q}p f_{2(p+q)}, && f_1\smile f_{2p} = 
f_{2p+1}, && f_1\smile f_{2p+1} = 0.
\end{align*}
These relations	 exhibit $H^*(\spe{L})$ as a tensor product of a divided power 
algebra and an exterior algebra. 	
 \qed
\end{prop}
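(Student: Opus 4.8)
The plan is to use that $\SC^*(\L)$ is the Koszul complex with zero differential, so that $\SC^*(\L)=H^*(\L)$ as graded algebras (Theorem~\ref{diffCC}, since $\L$ is cocommutative), and then to compute the cup product directly from the formula of Theorem~\ref{cupformula}. First I would record that $\SC^p(\L)=\hom_{S_p}(\L[p],\sgn_p)$ and that, since $\L[p]=\kk S_p$ is free of rank one over $\kk S_p$ (with the identity order $\iota_p$ corresponding to the identity of $S_p$), evaluation at $\iota_p$ is an isomorphism $\SC^p(\L)\xrightarrow{\ \sim\ }\kk$ carrying $f_p$ to $f_p(\iota_p)=1$. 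Consequently there is for each $p,q$ a unique scalar $c_{p,q}\in\kk$ with $f_p\smile f_q=c_{p,q}\,f_{p+q}$, namely $c_{p,q}=(f_p\smile f_q)(\iota_{p+q})$, and the whole proposition reduces to computing these scalars.

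Next I would evaluate the cup-product formula of Theorem~\ref{cupformula} at $z=\iota_{p+q}$. Because $\L$ is cocommutative, both $z\bs S$ and $z\fs T$ there are just the restriction of the linear order $z$ to the given subset, and the restriction of $\iota_{p+q}$ to any $S$, transported along the order-preserving bijection $\lambda_S:S\to[\#S]$, is the identity order $\iota_{\#S}$. Hence $f_p(\lambda_S(z\bs S))=f_q(\lambda_T(z\fs T))=1$ for every decomposition $(S,T)$ of $[p+q]$ with $\#S=p$ and $\#T=q$, so that
\[
  c_{p,q}=a_{p,q}:=\sum_{(S,T)}(-1)^{\sch(S,T)},
\]
the sum running over all such decompositions.

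Then I would evaluate the signed Schubert sum $a_{p,q}$. Splitting the sum according to whether the maximal element $p+q$ of $[p+q]$ lies in $S$ or in $T$ gives the recursion $a_{p,q}=a_{p-1,q}+(-1)^{p}a_{p,q-1}$ with $a_{p,0}=a_{0,q}=1$ (the summand with $p+q\in T$ carries an extra sign $(-1)^{\#S}$ coming from the $\#S$ pairs $(s,p+q)$ with $s\in S$, while the summand with $p+q\in S$ contributes no new pair). An induction then yields $a_{2i,2j}=a_{2i+1,2j}=a_{2i,2j+1}=\binom{i+j}{i}$ and $a_{2i+1,2j+1}=0$; equivalently $a_{p,q}=\binom{p+q}{p}_{-1}$, the Gaussian binomial at $q=-1$, by the classical $q=-1$ phenomenon. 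Substituting the pairs $(2p,2q)$, $(1,2p)$ and $(1,2p+1)$ into $f_p\smile f_q=a_{p,q}\,f_{p+q}$ produces the three displayed relations.

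Finally, since each $\SC^p(\L)$ is free of rank one on $f_p$ and the differential vanishes, these relations (with $f_1\smile f_1=0$ appearing as the case $p=0$ of $f_1\smile f_{2p+1}=0$) determine $H^*(\L)$ completely, and I would identify it as the tensor product of the divided power algebra on the degree-two class $f_2$ (so $f_{2p}=\gamma_p(f_2)$) with the exterior algebra on the degree-one class $f_1$ (so $f_{2p+1}=f_1\smile\gamma_p(f_2)$); in particular $f_1$ and $f_2$ generate. The hard part is the third step, the evaluation of $a_{p,q}$: the recursion is elementary but requires some inductive bookkeeping over the four parity classes, while the slicker Gaussian-binomial route rests on the mildly delicate Lucas-type identity $\binom{n}{k}_{-1}=\binom{\lfloor n/2\rfloor}{\lfloor k/2\rfloor}$ (which vanishes when $n$ is even and $k$ is odd).
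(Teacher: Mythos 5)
Your argument is correct and is exactly the calculation the paper omits ("A calculation, which we omit, shows the following"): it reduces everything, via Theorem~\ref{diffCC} (vanishing differential for the cocommutative $\spe{L}$) and the cup-product formula of Theorem~\ref{cupformula}, to evaluating the signed Schubert sums $a_{p,q}=\sum_{(S,T)}(-1)^{\sch(S,T)}$ at the identity order, which is the intended route. Your evaluation $a_{p,q}=\binom{p+q}{p}_{-1}$ (via the parity recursion or the $q=-1$ Gaussian binomial) is right and yields precisely the stated relations and the divided-power-tensor-exterior description.
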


This in turn fixes a small error in the literature \cite{Aguiar}*{Proposition 9.28}, which claims there is another $2$-cocycle
in $H^2(\L)$: unfortunately, the `descent cocycle' defined
there does not satisfy the cocycle equation. 
We will address the multiplicative
structure of $H^*(\spe{L})$ below.

\bigskip

\textit{The species of compositions.} 
The species of compositions $\spe{C}$ is the non-abelian analogue of the species 
of partitions $\spe{P}$. Let us recall its construction: the species of compositions 
$\spe{C}$ assigns to each finite set $I$ the set $\spe{C}[I]$ of compositions of 
$I$, that is, ordered tuples $(F_1,\ldots,F_t)$ of disjoint non-empty subsets of 
$I$ whose union is $I$. This has a standard left $\e$-comodule structure such 
that if $F=(F_1,\ldots,F_t)$ is a composition of $I$ and $S\subseteq I$, $F\ps 
S$ is the composition of $S$ obtained from the tuple $(F_1\cap S,\ldots,F_t\cap 
S)$ by deleting empty blocks. We view $\spe{C}$ as an $\e$-bicomodule with its 
cosymmetric structure. 

\begin{prop} The morphism $\spe{L}\longrightarrow \spe{C}$ induces an isomorphism
$H^*(\spe{C})\longrightarrow H^*(\spe{L})$ and, in fact, an isomorphism of commutative algebras
$\SC^*(\spe{C})\longrightarrow \SC^*(\spe{L})$. 
\end{prop}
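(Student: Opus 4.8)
The plan is to follow the template of the computations for $\spe{P}$ and $\spe{L}$ above and reduce everything to the Koszul complex. Both $\spe{L}$ and $\spe{C}$ carry their cosymmetric $\e$-bicomodule structures, and each component is weakly projective: $\spe{L}[p]$ is the regular representation of $S_p$, hence $\kk S_p$-free, and under the standing hypotheses on $\kk$ the permutation module $\spe{C}[p]$ on the compositions of $[p]$ is projective as well. So by Theorem~\ref{thm:mainresult} and Theorem~\ref{diffCC} the Koszul complexes $\SC^*(\spe{L})$ and $\SC^*(\spe{C})$ compute $H^*(\spe{L})$ and $H^*(\spe{C})$, and in both cases the differential vanishes because the bicomodules are cosymmetric; by the theorem following Theorem~\ref{cupformula} they moreover realise $H^*(\spe{L})$ and $H^*(\spe{C})$ as graded commutative algebras, with multiplications given by the cup-product formula of Theorem~\ref{cupformula} applied to the restriction diagonals. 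It therefore suffices to show that the morphism of $\e$-bicomodules $\spe{L}\to\spe{C}$ --- which sends a linear order $a_1<\dots<a_p$ on a set $I$ to the composition $(\{a_1\},\dots,\{a_p\})$ of $I$ into singletons --- induces an isomorphism of algebras $\SC^*(\spe{C})\to\SC^*(\spe{L})$ by precomposition.

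For the degreewise bijectivity I would invoke the Observation preceding these computations. A composition $(F_1,\dots,F_t)$ of $[p]$ has $S_{F_1}\times\dots\times S_{F_t}$ as its stabiliser inside $S_p$, and this subgroup contains a transposition --- in particular an odd permutation --- unless every $F_i$ is a singleton, i.e. unless the composition is the image of a total order on $[p]$. Hence, provided $\kk$ is not of characteristic two, every $f\in\hom_{S_p}(\spe{C}[p],\sgn_p)$ vanishes on all compositions with a block of size $\geqslant 2$, so $f$ is determined by its restriction along the inclusion $\spe{L}[p]\hookrightarrow\spe{C}[p]$; conversely, since $S_p$ preserves the subset of singleton compositions, any $S_p$-equivariant functional on $\spe{L}[p]$ extends by zero to an equivariant functional on $\spe{C}[p]$. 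Thus the precomposition map $\hom_{S_p}(\spe{C}[p],\sgn_p)\to\hom_{S_p}(\spe{L}[p],\sgn_p)$ is bijective for each $p$, and since the differentials on both sides vanish this is an isomorphism of cochain complexes; in particular the induced map $H^*(\spe{C})\to H^*(\spe{L})$ is an isomorphism.

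It remains to see that the map is multiplicative, which is immediate from the cup-product formula of Theorem~\ref{cupformula}. The morphism $\spe{L}\to\spe{C}$ is a morphism of coalgebras in $\e$-bicomodules: restricting the singleton composition $(\{a_1\},\dots,\{a_p\})$ of $I$ along a decomposition $(S,T)$ yields the pair of singleton compositions obtained by restricting the order to $S$ and to $T$, so the left and right restrictions $z\bs S$, $z\fs T$ appearing in the formula for $(f\smile g)(z)$ are transported compatibly under $\spe{L}\to\spe{C}$. Hence precomposition with $\spe{L}\to\spe{C}$ intertwines the two cup products, and the isomorphism of complexes above is an isomorphism of commutative algebras. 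The main obstacle here is not the argument proper --- which is essentially the one-line stabiliser/sign observation above --- but the bookkeeping of standing hypotheses: one must make sure $\spe{C}$ is weakly projective so that $\SC^*(\spe{C})$ genuinely computes $H^*(\spe{C})$, and that $\kk$ is taken not of characteristic two, exactly as in the treatment of the species of partitions.
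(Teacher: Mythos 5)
Your proof is correct and follows essentially the same route as the paper: the key point in both is that any composition with a block of size at least two is fixed by a transposition, so equivariant functionals to the sign representation are supported on singleton compositions, making the restriction map $\SC^*(\spe{C})\to\SC^*(\spe{L})$ bijective, with multiplicativity coming from functoriality of the Koszul complex and the vanishing differentials in the cosymmetric case. Your extra bookkeeping (weak projectivity of $\spe{C}$ and the characteristic-not-two hypothesis) is a sensible explicit record of assumptions the paper leaves implicit.
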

\begin{proof} It suffices that we prove the second claim, and, since $\SC^*(-)$ 
is a functor, that for a fixed integer $q$, the map 
$\SC^q(\spe{C})\longrightarrow \SC^q(\spe{L})$ is an isomorphism of modules. This 
follows from the fact a decomposition $F$ of a set $I$ is fixed by 
a transposition as soon as it has a block with at least two elements, and 
therefore an element of $\SC^q(\spe{C})$ vanishes on every composition of $[q]$, 
except possibly on those into singletons. Thus the surjective map 
$\SC^*(\spe{C})\longrightarrow \SC^*(\spe{L})$ is injective and it is thus an isomorphism of commutative
algebras. 
\end{proof}

\textit{The species of graphs.} 
The species 
$\mathsf{Gr}$ of graphs that assigns to 
each finite set $I$ the collection of graphs
with vertices labeled by $I$ admits a cosymmetric
$\e$-bicomodule structure obtained by restriction
of graphs. We 
have the following result concerning the cohomology groups of $\mathsf{Gr}$:

\begin{prop}
If $\Bbbk$ is of characteristic zero then, for each non-negative integer $p\geqslant 
0$, $\dim_k H^p(\mathsf{Gr})$ equals the number of isomorphism classes of 
graphs on $p$ vertices with no odd automorphisms, namely, 
\[ 1,\quad 1,\quad  0,\quad  0,\quad  1,\quad  6,\quad  28,\quad  252,\quad  
4726,\quad  150324,\quad  \ldots \] 
\end{prop}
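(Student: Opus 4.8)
The plan is to read off $H^*(\mathsf{Gr})$ directly from the small Koszul complex. First I would note that, because $\kk$ has characteristic zero, every group algebra $\kk S_p$ is semisimple, so $\mathsf{Gr}$ is weakly projective; by Theorem~\ref{thm:mainresult} the complex $K^*(\mathsf{Gr}) = \hom(\mathsf{Gr},\e^!)$ therefore computes $H^*(\mathsf{Gr})$, with $K^p(\mathsf{Gr}) = \hom_{S_p}(\mathsf{Gr}[p],\sgn_p)$. Since the $\e$-bicomodule structure on $\mathsf{Gr}$ is the cosymmetric one coming from graph restriction (its left and right restrictions agree, each being the induced subgraph), Theorem~\ref{diffCC} shows the differential of $K^*(\mathsf{Gr})$ vanishes identically, and hence $H^p(\mathsf{Gr}) = \hom_{S_p}(\mathsf{Gr}[p],\sgn_p)$ for every $p$. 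So the whole problem reduces to computing the dimension of this space of equivariant functionals.

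The second step is the combinatorial analysis of $\hom_{S_p}(\mathsf{Gr}[p],\sgn_p)$. As an $S_p$-module, $\mathsf{Gr}[p]$ is the permutation module on the set of graphs with vertex set $[p]$; its orbits are the isomorphism classes $[G]$ of graphs on $p$ vertices, and the stabilizer of a representative $G$ is $\Aut(G)\leqslant S_p$. By the observation recorded just above (valid since $\kk$ is not of characteristic two), any $f\in\hom_{S_p}(\mathsf{Gr}[p],\sgn_p)$ vanishes on every graph fixed by an odd permutation, that is, on every $G$ with $\Aut(G)\not\subseteq A_p$. On an orbit $[G]$ with $\Aut(G)\subseteq A_p$ one may instead assign $f(G)$ an arbitrary scalar and extend by $f(\tau G) := (-1)^\tau f(G)$; this is well defined because $\tau G = \tau' G$ forces $\tau^{-1}\tau'\in\Aut(G)$ to be even, so $(-1)^\tau = (-1)^{\tau'}$. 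Consequently $f\mapsto (f(G))_{[G]}$ identifies $\hom_{S_p}(\mathsf{Gr}[p],\sgn_p)$ with a direct sum of copies of $\kk$ indexed by the isomorphism classes of graphs on $p$ vertices with no odd automorphism, so that $\dim_\kk H^p(\mathsf{Gr})$ is exactly that number, which is the content of the proposition. (Equivalently one decomposes $\mathsf{Gr}[p] \cong \bigoplus_{[G]} \operatorname{Ind}_{\Aut(G)}^{S_p}\kk$ and uses Frobenius reciprocity: $\hom_{S_p}(\operatorname{Ind}_{\Aut(G)}^{S_p}\kk,\sgn_p) = (\sgn_p)^{\Aut(G)}$ is $\kk$ if $\Aut(G)\subseteq A_p$ and $0$ otherwise.)

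It then remains to produce the list of values by enumeration. For $p\leqslant 3$, every graph on $p$ vertices with $p\geqslant 2$ admits a transposition as an automorphism (a pair of twin vertices, or the order-reversing symmetry of a three-vertex path), so the first terms are $1,1,0,0$; for $p=4$ a short check through the eleven isomorphism types shows that the path $P_4$, with $\Aut(P_4) = \{\mathrm{id},(14)(23)\}$, is the unique graph whose automorphism group lies in $A_4$, whence $\dim H^4(\mathsf{Gr}) = 1$; the further values $6, 28, 252, 4726, 150324, \dots$ record the analogous count of graphs on $p$ vertices with automorphism group contained in $A_p$ in higher arities. I expect the only mildly delicate point to be the bookkeeping in the second step --- verifying that the scalar $f(G)$ on an orbit with only even automorphisms can be prescribed freely and extended consistently --- since everything else is a direct application of Theorem~\ref{thm:mainresult} and Theorem~\ref{diffCC}.
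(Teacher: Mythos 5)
Your proposal is correct and follows the same route as the paper: cosymmetry plus Theorem~\ref{diffCC} kills the differential, so $H^p(\mathsf{Gr})=\hom_{S_p}(\mathsf{Gr}[p],\sgn_p)$, whose dimension counts isomorphism classes of graphs on $p$ vertices with no odd automorphism, the numerical values coming from computer enumeration. You merely make explicit two points the paper leaves implicit --- weak projectivity via semisimplicity of $\kk S_p$ in characteristic zero, and the orbit/Frobenius-reciprocity identification of the space of equivariant functionals --- and both are carried out correctly.
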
 
This sequence is \cite{OEIS}*{\href{http://oeis.org/A281003}{A281003}}. 
\begin{proof}
Since the structure on $\mathsf{Gr}$ is cosymmetric, the differential of 
$\SC^*(\mathsf{Gr})$ vanishes, and since we are mapping to the
sign representation, this tells us 
$\SC^q(\mathsf{Gr})$ has dimension as in the statement of the proposition. The 
tabulation of the isomorphism classes of graphs in low cardinalities can be done 
with the aid of a computer ---we refer to Brendan McKay's calculation 
\cite{McKay} for the final result--- and then filter out those graphs with odd 
automorphisms. 
\end{proof}

We can exhibit cocycles whose cohomology classes generate  $H^1(\mathsf{Gr})$ 
and $H^4(\mathsf{Gr})$: in degree one, we have the cardinality cocyle 
$\kappa$, and in degree four, the normalized cochain $p^4: \mathsf{Gr} 
\longrightarrow \e^{\tt 4}$ such that for a decomposition $F\vdash_4 I$, and a 
graph $g$ with vertices on $I$, $p^4(F_1,F_2,F_3,F_4)(g)$ is the number of 
inclusions $\zeta : p_4\longrightarrow g$, where $p_4$ is the graph
\begin{center}
\begin{tikzpicture}[line cap=round,line join=round,>=triangle 
45,x=1.0cm,y=1.0cm]
\clip(0.5,1.5) rectangle (4.25,2.5);
\draw (1.,2.)-- (2.,2.);
\draw (2.,2.)-- (3.,2.);
\draw (3.,2.)-- (4.,2.);
\begin{scriptsize}
\draw [fill=black] (1.,2.) circle (1.5pt) node[above]{$1$};
\draw [fill=black] (2.,2.) circle (1.5pt) node[above]{$2$};
\draw [fill=black] (3.,2.) circle (1.5pt) node[above]{$3$};
\draw [fill=black] (4.,2.) circle (1.5pt) node[above]{$4$};
\end{scriptsize}
\end{tikzpicture}
\end{center}
and $\zeta(i) \in F_i$ for $i\in [4]$. One can check this cochain is in fact a 
cocycle, and it is normalized by construction. 
 Even more can be 
said: our formula for the cup product and induction shows that for each 
$n\geqslant 1$, the product $f^n$ is nonzero on the graph that is the disjoint 
union of $n$ paths $p_4$, so that $H^{4n}$ is always non-vanishing for 
$n\geqslant 1$. Hence the cohomology algebra $H^*(\mathsf{Gr})$ contains both 
an exterior algebra in degree $1$ and a polynomial algebra in degree four.

\section{Koszul duality for (twisted) coalgebras}

Let us fix a (twisted) coalgebra $\cc$.
The work done above carries over, mutatis mutandis,
to the case $\cc$ is Koszul. When this is the case, for
every $\cc$-bicomodule we have
available the Koszul complex 
\[ K^*(\x,\cc) = 
	\hom_{\Sp_\kk}(\x,\cc^\antishriek) \] 
	to compute its cohomology groups $H^*(\x,\cc)$. The 
	structure of certain (co)algebras arising
	from combinatorial objects (such as polytopes)
	can be nicely 
	understood through combinatorial methods, see for
	example~\cite{AguiarArdila}.
	We also remark that in the book~\cite{VladimirMurray}*{Chapter 4}, V. Dotsenko and M. Bremner
	explain how apply methods of Gr\"obner bases
	to twisted algebras, which can then be
	effectively used to obtain results on
	the Koszulness of these. In particular,
	their formalism implies immediately that
	every free cocommutative (twisted) coalgebra 
 	is Koszul, by repeating, mutatis mutandis,
 	the proof done in the classical case.
 	
	\begin{remark}
	It very often happens that $\cc$ 
	is Koszul for trivial reasons: if $\cc$ is a cocommutative connected (twisted) bialgebra, then by the Milnor--Moore theorem (see \cite{AguiarHopf}*{Theorem 118}
	and \cite{Stover}*{Theorem 11.3}) the underlying coalgebra of $\cc$ is cocommutative cofree over 
	the collection of its primitives. 
	\end{remark}
	
	We can apply this remark to the species
	$\spe{L}$ of linear orders to deduce
	the following result which should
	aid us in computing $H^*(-,\spe{L})$ in the category of $\spe{L}$-bicomodules.
	
	\begin{cor}[cf. \cite{Patras2004}*{Proposition 10 and Proposition 17}] The coalgebra
	$\spe{L}$ is cofree cocommutative conilpotent
	over the species underlying the
	free Lie algebra functor and,
	in particular, it is Koszul with Koszul dual  algebra 
	$ \spe{L}^{\emph{\antishriek}}= S(s^{-1}\mathsf{Lie})$, 
	the free commutative
	algebra over the desuspension of $\;\mathsf{Lie}$. 
	\end{cor}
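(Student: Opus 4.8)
The plan is to deduce the corollary from the structure theory of cocommutative bialgebras in species, using the computation of $\e^!$ carried out above as a template. The key input is that $\spe{L}$, with product the concatenation of linear orders and coproduct their restriction, is the enveloping algebra $U(\mathsf{Lie})$ of the free Lie algebra species $\mathsf{Lie}=\mathrm{Lie}(\spe{X})$ on the singleton species $\spe{X}$ --- equivalently, the free associative algebra $T(\spe{X})$ equipped with its deshuffle coproduct --- a fact one can take from \cite{Patras2004}*{Proposition 10 and Proposition 17}. In particular $\spe{L}$ is a connected cocommutative bialgebra, so over a field of characteristic zero the Milnor--Moore theorem invoked in the Remark above applies to it exactly as it does to $\e$.

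The first assertion of the corollary is then immediate: by Poincar\'e--Birkhoff--Witt for species, the underlying comonoid of $U(\mathsf{Lie})$ is isomorphic to $S(\mathsf{Lie})$, the free commutative (symmetric) algebra on the species $\mathsf{Lie}$, which is precisely the cofree cocommutative conilpotent comonoid cogenerated by $\mathsf{Lie}$. Thus $\spe{L}$ is cofree cocommutative conilpotent over the species underlying the free Lie algebra functor, as claimed.

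For Koszulness and the identification of the Koszul dual I would argue exactly as for $\e$. Cofree cocommutative conilpotent (twisted) coalgebras are Koszul --- this is the classical Koszulness of the symmetric coalgebra $S^c(V)$, which passes to species either via the Gr\"obner-basis methods of \cite{VladimirMurray}*{Chapter 4} or, again, via the Milnor--Moore remark --- and the Koszul dual of such a coalgebra cogenerated by a connected species $P$ is the free commutative algebra $S(s^{-1}P)$ on the desuspension of $P$, with exactly the grading and sign conventions that produced $\e^!\cong\Sigma^{-1}\e^-$ above. Taking $P=\mathsf{Lie}$ gives $\spe{L}^{\antishriek}=S(s^{-1}\mathsf{Lie})$.

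I expect the only genuine work to be in this last step. Unlike the single generator cogenerating $\e$, the species $\mathsf{Lie}$ is spread over all arities $\geqslant 1$, so one must check that the weight grading used to define $\spe{L}^{\antishriek}=H^0(\Omega(\spe{L}))$ interacts correctly with the internal arity grading of $\mathsf{Lie}$: concretely, that the cobar differential on $\Omega(\spe{L})$ stays homogeneous of the expected degree and that its zeroth cohomology is precisely $S(s^{-1}\mathsf{Lie})$, Koszul signs included (the product of $\e^-$ already carries signs, and the analogue must be tracked here). One can avoid the explicit cobar computation by checking instead that Koszul duality for the cocommutative-coalgebra / commutative-algebra pair is functorial in the cogenerating species, so that the computation of $\e^!$ propagates to arbitrary connected cogenerators $P$; making that functoriality precise in the twisted setting is the delicate point.
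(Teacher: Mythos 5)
Your proposal is correct and follows essentially the same route as the paper: identify $\spe{L}\cong S^c(\mathsf{Lie})$ as a cocommutative conilpotent coalgebra via the structure theory of connected cocommutative twisted bialgebras (the paper cites the computation of the primitives of $\spe{L}$ and the Milnor--Moore theorem from the preceding Remark, where you phrase the same step through $U(\mathsf{Lie})$ and PBW), and then invoke the Koszulness of cofree cocommutative conilpotent coalgebras with Koszul dual $S(s^{-1}\mathsf{Lie})$. The sign/grading bookkeeping you flag as the remaining work is exactly what the paper also leaves implicit, resting on the earlier identification $\e^!\cong\Sigma^{-1}\e^-$ and the Gr\"obner-basis argument of Bremner--Dotsenko, so no genuine gap.
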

	
	\begin{proof}
	The fact that the primitives of
	$\spe{L}$ is equal to $\mathsf{Lie}$ 
	contained in \cite{AguiarHopf}*{Corollary 121} and the article cited in the statement of the corollary, and by
	the Milnor--Moore theorem in the
	remark above we have have isomorphism
	$ S^c(\mathsf{Lie}) \longrightarrow \spe{L}$
	 of cocommutative conilpotent coalgebras. Since  
	cocommutative coalgebras are Koszul, the result follows.	
	\end{proof}
	
 With this at hand, we can prove the
 following result.
 
 \begin{theorem}
 The  Koszul dual algebra
 of the  coalgebra $\spe{L}$ is
 the free commutative algebra
 $S(s^{-1}\mathsf{Lie})$ over the desuspension of $\;\mathsf{Lie}$. In particular, for each $p,q\in\NN$
 we have that
 \[ H^{p-q}(\Omega^*(\spe{L})[p])=S(s^{-1}\mathsf{Lie})[p]^{p-q}
 \]
 has basis in correspondence with
 the permutations of $p$ with
 $p-q$ disjoint cycles and, hence,\[
 \dim_k S(s^{-1}\mathsf{Lie})[p]^{p-q} =  \stirlingi{p}{p-q}\]
 an unsigned Stirling number of the first kind.\end{theorem}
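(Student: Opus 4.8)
The plan is to combine the corollary just proved — that $\spe{L}$ is cofree cocommutative conilpotent over $\mathsf{Lie}$, hence Koszul with Koszul dual $\spe{L}^{\antishriek}=S(s^{-1}\mathsf{Lie})$ — with the degree bookkeeping already set up for $\Omega^*(\e)$ in Section 3. The first assertion is immediate from the corollary, so the real content is the dimension count. First I would recall that, exactly as for $\e$, the cobar complex $\Omega^*(\spe{L})$ carries a weight grading (by cardinality of the underlying set) and a homological grading, and that Koszulness of $\spe{L}$ means precisely that the projection $\Omega^*(\spe{L})\to \spe{L}^{\antishriek}$ is a quasi-isomorphism of (weight-graded) algebras. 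Restricting to the weight-$p$ part, this gives $H^{p-q}(\Omega^*(\spe{L})[p])=S(s^{-1}\mathsf{Lie})[p]^{p-q}$, where the cohomological degree $p-q$ on the right records the number of desuspended $\mathsf{Lie}$-factors used (each generator of $\mathsf{Lie}$ on a block of size $m$ sitting in a single homological degree, and the total weight being $p$); I would spell out the degree normalization so that a product of $k$ generators of $\mathsf{Lie}$, of total weight $p$, lands in cohomological degree $p-k$, matching $k=p-q$.

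Next I would produce the basis. Since $S(-)$ is the free commutative algebra, a PBW-type basis for $S(s^{-1}\mathsf{Lie})[p]$ is given by (unordered) products of basis elements of $\mathsf{Lie}$ evaluated on the blocks of a set partition of $[p]$. Using the classical fact that $\dim \mathsf{Lie}[B]=(|B|-1)!$ for a finite set $B$, together with the standard bijection between (i) a set partition of $[p]$ into $k$ blocks together with a choice of cyclic order on each block and (ii) a permutation of $[p]$ with exactly $k$ disjoint cycles, one gets that the weight-$p$, degree-$(p-k)$ component has a basis indexed by permutations of $[p]$ with $k=p-q$ disjoint cycles. (Here one must be slightly careful that the commutative — rather than symmetric-algebra — structure is the right one; but $s^{-1}\mathsf{Lie}$ is concentrated so that $S(-)$ on it behaves like a polynomial algebra on the relevant generators, so the PBW basis is literally "monomials in block-generators", and the sign subtleties from the desuspension affect only the $S_p$-action, not the dimension.) Counting these bases elements then gives $\dim_k S(s^{-1}\mathsf{Lie})[p]^{p-q}=\genfrac{[}{]}{0pt}{}{p}{p-q}$, the unsigned Stirling number of the first kind, by the well-known combinatorial definition of $\genfrac{[}{]}{0pt}{}{p}{k}$ as the number of permutations of $p$ with $k$ cycles.

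I expect the main obstacle to be purely bookkeeping rather than conceptual: namely, pinning down the two gradings on $\Omega^*(\spe{L})$ so that "$H^{p-q}$" on the left of the displayed formula genuinely matches "degree $p-q$ component of $S(s^{-1}\mathsf{Lie})[p]$" on the right, i.e. verifying that under the Koszul quasi-isomorphism a product of $k$ Lie-generators of total weight $p$ sits in cohomological degree $p-k$. Once that identification is in place, the Stirling-number count is a routine application of $\dim\mathsf{Lie}[B]=(|B|-1)!$ and the cycle description of $\genfrac{[}{]}{0pt}{}{p}{k}$, and the sign representation carried by the desuspensions is irrelevant to the dimension. I would therefore structure the write-up as: (1) invoke the corollary for the first sentence; (2) restrict the Koszul quasi-isomorphism to weight $p$ to get the cohomology identification; (3) give the PBW basis of $S(s^{-1}\mathsf{Lie})[p]$ by partitions-with-cyclic-orders; (4) translate to permutations with $p-q$ cycles and conclude with the Stirling number.
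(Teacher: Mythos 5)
Your proposal follows essentially the same route as the paper: the first claim is quoted from the corollary on the cofreeness of $\spe{L}$ over $\mathsf{Lie}$, and the dimension count is obtained by writing a basis of $S(s^{-1}\mathsf{Lie})[p]$ as unordered partitions of $[p]$ with a basis element of $\mathsf{Lie}$ placed on each block, then using $\dim\mathsf{Lie}[B]=(\abs{B}-1)!$ (the paper phrases this via the basis of $\mathsf{Lie}[n]$ indexed by permutations fixing $1$, which is the same as your cyclic orders) to biject with permutations having $p-q$ cycles. One small bookkeeping slip to fix before writing this up: under the grading actually used in the statement and in the paper's cobar convention (a word with $n$ tensor factors of $s^{-1}\overline{\spe{L}}$ sits in degree $n$), a product of $k$ desuspended $\mathsf{Lie}$-generators sits in cohomological degree $k$, not $p-k$; so $H^{p-q}(\Omega^*(\spe{L})[p])$ corresponds to $k=p-q$ factors directly, which is what makes the answer $\stirlingi{p}{p-q}$ rather than $\stirlingi{p}{q}$. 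As you literally stated the normalization ($k$ factors in degree $p-k$ together with $k=p-q$), the two conventions are mixed and would force $q=p-q$; once that is straightened out, your steps (3)–(4) are exactly the paper's argument and the proof goes through.
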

 
 \begin{proof}
  The first equality follows by
  Koszul duality, since the algebra
  $S(s^{-1}\mathsf{Lie})$ is Koszul 
  dual to $\spe{L}$ and, as such
  equal to the homology of the cobar
  construction on $\spe{L}$.
  For the second equality, all that we need to do is observe
 that for a finite set $I$ of size
 $p$, 
 an elementary tensor
  \[z_1\odot\cdots\odot z_{p-q}\in S(s^{-1}\mathsf{Lie})[p]^{p-q} \]
    of $S(s^{-1}\mathsf{Lie})$ is
    in homological degree $p-q$
    and corresponds to the datum of
    an \emph{unordered} partition 
    of $I$ into subsets
    $(F_1,\ldots,F_{p-q})$
    with $z_i \in \mathsf{Lie}[F_i]$. 
    On the other hand, for a finite
    set $[n]$ we have a basis of
    $\mathsf{Lie}[n]$ indexed by
    permutations of $n$ that fix 
    $1$. In this way, such an elementary
    tensor 
        is indeed in bijection with a
    permutation of $I$ with $p-q$
    disjoint cycles, which is what
    we wanted. 
 \end{proof}
 
 As pointed out to the author, the previous theorem
 computes the underlying $\kk$-spaces of the Koszul
dual algebra $H(\Omega(\L)) = \L^!$, but does not compute
the symmetric group actions, which are necessary to carry out
the computation of the Koszul complex $\hom(\x,\L^!)$. It
would be interesting to obtain this description.

\section{A word on deformations}

Fix a coalgebra $\x$ in $\Sp_k$, with comultiplication $\Delta : \x\longrightarrow 
\x\cdot \x$ and counit $\varepsilon : \x\longrightarrow 1$, and set $K = 
\kk\llbracket t\rrbracket$, the algebra of formal power series in $\kk$. We write 
$\x\llbracket t\rrbracket $ for the coalgebra in $\Sp_K$ obtained by extending scalars pointwise in 
$\x$. By a \emph{weak deformation of $\x$} we mean a $K$-linear
coalgebra structure 
$\Delta_t : \x\llbracket t\rrbracket\longrightarrow (\x \cdot \x)\llbracket t\rrbracket$ determined on generators by 
\[ \Delta_t(S,T)(z) = \sum_{i\geqslant 0} \Delta_i(S,T)(z)
t^i \cdot   z\bs S \otimes z\fs T= f_t(S,T)\cdot   z\bs S \otimes z\fs T \]
where $\Delta_i : \x\longrightarrow \e^2$ and $\Delta_0=1$. 

\begin{remark}
These deformation are  
\emph{weak} in the sense we are only modifying the coefficients of our 
comultiplication by means of a power series coefficient and not modifying the 
higher order terms of the comultiplication: the terms $z\bs S\otimes z\fs T$ appearing in the formula above are the original ones
appearing in the comultiplication of $\x$. 
\end{remark}

As
mentioned in the introduction, generic deformations require us to consider
maps $\Delta_i : \x\longrightarrow \x^{\cdot 2}$ that are cochains in ${C}^2(\x,\x)$, but we are only allowing for cochains in ${C}^2(\x,\e)$. 
The condition that $\Delta_t$ is associative is equivalent to the collection of 
equalities
\begin{align*}
\tag{$\delta_n$} \sum_{i+j=n} \left(\Delta_i(RS,T)(z)\Delta_j(R,S)(z\bs T^c) - 
\Delta_i(R,ST)(z)\Delta_j(S,T)(z\fs R^c)\right)= 0
\end{align*}
for $n\in\mathbb{N}_0$ and $(R,S,T)$ an arbitrary decomposition of a finite set. 
The condition that $\Delta_t$ is counital es equivalent to each $\Delta_i$ being 
normalized for $i\geqslant 1$. In particular $(\delta_1)$ says that $\Delta_1$ 
belongs to $Z^2(\x,\e)$, as expected. Thus, every weak deformation of $\x$ 
gives a corresponding $2$-cocycle
representing an element in $H^2(\x,\e)$ . 

\smallskip

 Given cochains $\alpha,\beta\in C^2(\x,\e)$, we 
write $\alpha \star_{1,0} \beta$ and  $\alpha \star_{0,1} \beta$ 
for the $3$-cochains such that
\begin{align*}
(\alpha \star_{1,0} \beta)(R,S,T)(z) &= 
\alpha(R,ST)(z)\beta(S,T)(z \fs R^c),\\
(\alpha \star_{0,1} \beta)(R,S,T)(z) &= 
\alpha(RS,T)(z)\beta(R,S)(z\bs T^c). 
\end{align*}

In turn, we define a Gerstenhaber bracket using the
following formulas: 
\begin{align}\label{mixedops}
& \alpha\star\beta = \alpha  \star_{1,0} \beta - 
\alpha {\star}_{0,1} \beta, && \{ \alpha,\beta \} = \alpha\star\beta - 
\beta \star \alpha.
\end{align}
With this at hand, we can write equation $(\delta_n)$ in
the succinct form
\[ \sum_{i+j=n} \Delta_i \star \Delta_j  = 0.\] We say a $2$-cocycle $\Delta_1$ is integrable if it arises in this way from a 
deformation of $\x$. We note that from $(\delta_2)$ that if
$\Delta_1$ is integrable, then its first obstruction
$\sigma_1 =  \Delta_1 \star \Delta_1$
must me a coboundary in $C^3(\x,\e)$. A lengthy calculation shows that
the first obstruction $\sigma_1$ is a $3$-cocycle. 
One can use every $2$-cocycle to deform a coalgebra in the strong sense in characteristic zero which is, as we mentioned, the sense in 
which Aguiar and Mahajan intend to deform coalgebras in 
\cite{Aguiar}.

\begin{theorem}\label{thm:integration}
Suppose $\kk$ is of characteristic zero and let $\Delta_1$ be a 
normalized $2$-cocycle $\x\longrightarrow \e^2$. Then there exists 
a weak deformation $\x_t$ of $\x$ corresponding to $\Delta_1$.
\end{theorem}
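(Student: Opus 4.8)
The plan is to construct the deformation $\Delta_t$ by recursively solving the equations $(\delta_n)$ for the coefficients $\Delta_n$, using the characteristic zero hypothesis to invoke a standard contracting homotopy argument. First I would set $\Delta_0 = 1$ and take $\Delta_1$ to be the given normalized $2$-cocycle; the equation $(\delta_1)$ is satisfied precisely because $\Delta_1 \in Z^2(\x,\e)$. Proceeding by induction, suppose $\Delta_0,\ldots,\Delta_{n-1}$ have been chosen so that all the equations $(\delta_1),\ldots,(\delta_{n-1})$ hold and each $\Delta_i$ with $i\geqslant 1$ is normalized. The equation $(\delta_n)$ then reads
\[ \delta^2 \Delta_n = \sum_{i+j=n,\; i,j\geqslant 1} \Delta_i \star \Delta_j =: \omega_n, \]
where the right-hand side $\omega_n$ involves only the previously constructed terms. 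So the induction step amounts to showing that $\omega_n$ is a coboundary, i.e.\ that there exists $\Delta_n \in C^2(\x,\e)$ with $\delta^2 \Delta_n = \omega_n$, and that $\Delta_n$ can be taken normalized.

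The key technical input is that $\omega_n$ is a $3$-cocycle; this is precisely the ``lengthy calculation'' alluded to just above the theorem statement, where one verifies (for $n=2$, and then identically for general $n$ using the already-satisfied lower equations $(\delta_k)$ together with the associativity of the $\star_{1,0}$ and $\star_{0,1}$ operations) that $\delta^3 \omega_n = 0$. Granting this, the obstruction to solving $(\delta_n)$ is the class $[\omega_n] \in H^3(\x,\e)$. The hard part — and the place where characteristic zero is essential — is to show this class vanishes. The cleanest route is not to compute $H^3$ but to produce an explicit primitive: one uses the fact that in characteristic zero the complex $C^*(\x,\e)$ (equivalently its normalized version) is equipped with a contracting homotopy, coming either from the codegeneracy maps of the cosimplicial structure or, more structurally, from the fact that $\e$ is a cofree conilpotent cocommutative coalgebra so that the cobar complex $\Omega^*(\e)$ is a resolution with an explicit splitting in nonnegative degrees. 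Applying such a homotopy $h$ to $\omega_n$ and setting $\Delta_n = h(\omega_n)$ gives a solution to $(\delta_n)$, and since $h$ can be chosen compatibly with the codegeneracies, $\Delta_n$ is automatically normalized.

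I expect the main obstacle to be organizing the cocycle identity $\delta^3 \omega_n = 0$ and the homotopy argument in a way that is uniform in $n$: the Gerstenhaber-type calculus set up in equation~\eqref{mixedops} makes the $n=2$ case transparent, but one must check that the lower relations $(\delta_1),\ldots,(\delta_{n-1})$ feed into the general case exactly as coassociativity does in the classical Gerstenhaber deformation theory of \cite{Gerstenhaber1964}. Once that bookkeeping is in place, the rest is formal: the recursion never stalls, the resulting $\Delta_t = \sum_{i\geqslant 0}\Delta_i t^i$ is coassociative by construction, it is counital because every $\Delta_i$ with $i\geqslant 1$ is normalized, and it restricts to the original coproduct at $t=0$, so it is a weak deformation of $\x$ corresponding to $\Delta_1$. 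The characteristic zero hypothesis enters only through the existence of the normalized contracting homotopy (equivalently, through division by factorials in the standard homotopy formula), which is exactly why the statement is restricted to that case.
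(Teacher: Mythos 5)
Your overall strategy---recursively solving the equations $(\delta_n)$ and treating $\omega_n=\sum_{i+j=n,\,i,j\geqslant 1}\Delta_i\star\Delta_j$ as an obstruction class in $H^3(\x,\e)$---is the classical Gerstenhaber scheme, but the step where you kill the obstruction contains a genuine error. You claim that in characteristic zero the (normalized) complex $C^*(\x,\e)$ admits a contracting homotopy, either from the codegeneracies or from the cofreeness of $\e$, so that $\Delta_n=h(\omega_n)$ always exists. This is false: the codegeneracies only contract the degenerate subcomplex (that is the content of normalization), and the cobar construction $\Omega^*(\e)$ is quasi-isomorphic to $\e^!$, not acyclic, so neither produces a homotopy contracting $C^*(\x,\e)$. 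Indeed $H^3(\x,\e)$ is nonzero in many of the examples computed in this very paper (for instance $H^3(\spe{L})\cong\kk$ and $H^3(\spe{S}^3)\cong\kk$), so the vanishing of the obstruction class cannot be deduced from any general acyclicity statement, and your induction has no reason not to stall. If you want to run the obstruction-theoretic argument you must show by hand that each $\omega_n$ is a coboundary, which is not addressed.

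The paper avoids obstruction theory altogether by exploiting that the deformation is \emph{weak}: the cochains $\Delta_i$ take values in $\e$, i.e.\ they are scalar-valued, so the $2$-cocycle condition is the additive identity
\[
\Delta_1(RS,T)(z)+\Delta_1(R,S)(z\bs T^c)=\Delta_1(R,ST)(z)+\Delta_1(S,T)(z\fs R^c)
\]
in $\kk$, and one may simply exponentiate it: setting $\Delta_i(S,T)(z)=\tfrac{1}{i!}\Delta_1(S,T)(z)^i$ (so $\Delta_t=\exp(t\Delta_1)$ componentwise), raising the identity above to the $n$th power and expanding by the binomial theorem yields exactly $(\delta_n)$ for every $n$, with normalization automatic. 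Characteristic zero enters only through the factorials. This closed-form solution is where the scalar nature of weak deformations is essential, and it is the ingredient your proposal is missing; without it, or without an explicit verification that every $\omega_n$ bounds, the recursive argument does not go through.
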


\begin{proof} We set, for each $i\in \mathbb N_0$ and each decomposition $(S,T)$ 
of a finite set, 
 \[ \Delta_i(S,T)(z) = \frac{1}{i!}\Delta_1(S,T)(z)^i\] 
and observe that equation $(\delta_n)$ can be obtained considering the
equations (which hold because $\Delta_1$ is a $2$-cocycle)
\[ \Delta_1(RS,T)(c) + \Delta_1(R,S)(z\bs T^c) =  
   \Delta_1(R,ST)(c) + \Delta_1(S,T)(z\fs R^c) \]
and raising the left and right terms to the $n$th power. 
\end{proof}

Remark that, if we write $\Delta_t(S,T)(c) = \exp( \Delta_1(S,T)(c) t)$, then 
the analogy with the deformations considered in \cite{Aguiar}*{Section 9.6} is made evident by the 
change of variables $q = e^t$; in there the authors consider the simpler looking formula
\[ \Delta_q(S,T)(z) = q^{\Delta_1(S,T)}  z\bs S \otimes z\fs T.\] 
Moreover, what we did above shows one can 
integrate a cocycle up to degree $p-1$ when $\kk$ is of characteristic $p$, and  in such case the $p$th obstruction to the integrability of $\Delta_1$ is
\[\sigma_p = -\frac{1}{p}\sum_{i=1}^{p-1} \binom{p}{i} 
\{ \Delta^i,\Delta^{p-i}\}. \]
just as in~\cite{Gerstenhaber1964}*{p. 70}.


\bibliographystyle{alpha} 
\bibliography{twisted.bib}

\Addresses

\end{document}